\definecolor{black}{rgb}{0,0,0}
\definecolor{red}{rgb}{1,0,0}
\newcommand\red[1]{\textcolor{red}{#1}}
\definecolor{blue}{rgb}{0,0,1}
\newcommand\cyr
	\renewcommand\rmdefault{wncyr}
	\renewcommand\sfdefault{wncyss}
	\renewcommand\encodingdefault{OT2}
\DeclareTextFontCommand{\textcyr}{\cyr}
\def\XXint#1#2#3{{\setbox0=\hbox{$#1{#2#3}{\int}$ }
		\vcenter{\hbox{$#2#3$ }}\kern-.6\wd0}}
\newcommand{\etamaxmin}[1]{\eta_{\text{#1}}}
\newcommand{\dx}{\,\mathrm{d}x}
\newcommand{\ds}{\,\mathrm{d}s}
\newcommand{\prnt}[1]{\left( #1 \right)}
\newcommand{\norm}[1]{\left\|#1\right\|}
\newcommand{\normE}[2]{\norm{#1}_{H^{1}_{\kappa}\prnt{#2}}}
\newcommand{\seminormE}[2]{|#1|_{H^{1}_{\kappa}\prnt{#2}}}
\newcommand{\normL}[2]{\norm{#1}_{L^2\prnt{#2}}}
\newcommand{\normLii}[2]{\norm{#1}_{L^2_{\widetilde{\kappa}^{-1}}\prnt{#2}}}
\newcommand{\normI}[2]{\norm{#1}_{L^\infty\prnt{#2}}}
\newtheorem{theorem}{Theorem}[section]
\newtheorem{assumption}{Assumption}[section]
\newtheorem{remark}{Remark}[section]
\newtheorem{lemma}{Lemma}[section]
\numberwithin{equation}{section}
\title{Wavelet-based Edge Multiscale Parareal Algorithm for Parabolic Equations with Heterogeneous Coefficients}
\author{Guanglian Li\thanks{Corresponding author. Bernoulli Institute, University of Groningen, Nejinborgh 9, 9747 AG, Groningen, The Netherlands. (\texttt{lotusli0707@gmail.com}, \texttt{guanglian.li@rug.nl}). GL acknowledges the
		support from the Royal Society (London, United Kingdom) through a Newton international fellowship.}\, and Jiuhua Hu\thanks{Department of Mathematics, Texas A\&M University,
		College Station, TX 77843-3368, USA. (\texttt{E-mail: jhu@math.tamu.edu})}
}
\begin{document}
	\maketitle
	\begin{abstract}
		We propose in this paper the Wavelet-based Edge Multiscale Parareal (WEMP) Algorithm to efficiently solve parabolic equations with heterogeneous coefficients. This algorithm combines the advantages of multiscale methods that can deal with heterogeneity in the spacial domain effectively, and the strength of parareal algorithms for speeding up time evolution problems when sufficient processors are available. We derive the convergence rate of this algorithm in terms of the mesh size in the spatial domain, the level parameter used in the multiscale method, the coarse-scale time step and the fine-scale time step. Several numerical tests are presented to demonstrate the performance of our algorithm, which verify our theoretical results perfectly.
	\end{abstract}
\noindent{\bf Keywords:}
multiscale, heterogeneous, edge, wavelets, parareal, parabolic
	\section{Introduction}
	We consider in this paper a new efficient multiscale parareal algorithm for parabolic problems with heterogeneous coefficients.
	We first formulate the heterogeneous parabolic problems to present our new multiscale methods. Let $D\subset
	\mathbb{R}^d$ ($d=1,2,3$) be an open bounded Lipschitz domain. We seek a function $u(\cdot,t)\in V:=H^{1}_{0}(D)$ such that
	\begin{equation}\label{eqn:pde}
	\begin{aligned}
	\frac{\partial u}{\partial t}-\nabla\cdot(\kappa\nabla u)&=f &&\quad\text{ in }D\times (0,T],\\
	u(\cdot,0)&=u_0 &&\quad\text{ in }D,\\
	u&=0 &&\quad\text{ on } \partial D\times [0,T],
	\end{aligned}
	\end{equation}
	where the force term $f\in L^2([0,T]; \dot{H}^2(D))$ satisfying $\partial_{t}f\in L^2([0,T]; L^2(D))$, the initial data $u_0\in \dot{H}^3(D)\cap H^1_0(D)$ and the permeability coefficient $\kappa\in C^{\infty}(D)$ with $\alpha\leq\kappa(x)
	\leq\beta$ almost everywhere for some lower bound $\alpha>0$ and upper bound $\beta>\alpha$. Furthermore, the compatibility condition holds: $f(\cdot,0)+\nabla\cdot(\kappa\nabla u_0)\in H^1_0(D)$. Here, $\dot{H}^3(D)\subset L^2(D)$ is a Hilbert space to be defined in Section \ref{section:problem setting}. We denote by $\Lambda:=
	\frac{\beta}{\alpha}$ the ratio of these bounds, {which reflects the contrast of the coefficient $\kappa$}. To simplify the notation, let $I:=[0,T]$. Note that
	the existence of multiple scales in the coefficient $\kappa$ rends directly solving Problem \eqref{eqn:pde} challenging, since
	resolving the problem to the finest scale would incur huge computational cost.
	
	The accurate description of many important applications, e.g., composite materials, porous media and reservoir simulation, involves mathematical models with heterogeneous coefficients. In order to adequately describe the intrinsic complex properties in practical scenarios, the heterogeneous coefficients can have
	both multiple inseparable scales and high-contrast. Due to this disparity of scales, the classical numerical treatment becomes prohibitively expensive
	and even intractable for many multiscale applications. Nonetheless, motivated by the broad spectrum of practical applications, a large number of multiscale model reduction techniques, e.g., multiscale finite element methods (MsFEMs),
	heterogeneous multiscale methods (HMMs), variational multiscale methods, flux norm approach, generalized multiscale finite element methods (GMsFEMs) and localized orthogonal decomposition (LOD), have been proposed in the literature \cite{MR1455261,MR1979846,MR1660141,MR2721592, egh12, MR3246801, li2017error} over
	the last few decades. They have achieved great success in the efficient and accurate simulation of heterogeneous problems. Recently, a so-called Wavelet-based Edge Multiscale Finite Element Method (WEMsFEM), c.f. Algorithm \ref{algorithm:wavelet}, was proposed within the framework of GMsFEMs \cite{egh12} that facilitates deriving a rigorous convergence rate with merely mild assumptions \cite{li2019convergence, fu2018,fu2019wavelet}. The main idea of this method is to utilize wavelets as the basis functions over the coarse edges, and transform the approximate rate over the edges to the convergence rate in each local region. Then the Partition of Unity Method (PUM) \cite{Babuska2} is applied to derive the global convergence rate. The motivation for using wavelets as the ansatz space is that due to the existence of heterogeneity, the solution has a low regularity, and wavelets are known to be efficient in approximating functions with low regularity. We will apply this method in this paper to handle the heterogeneity in the spatial domain $D$.
	
Furthermore, motivated by the great demand for an efficient solver with high accuracy as well as a reasonable computing time in many practical applications, e.g., financial mathematics \cite{Bal_Mayday_AmericanPut}, fluid mechanics and fluid-structure interaction \cite{ time_decomposed_parallel2003, time_parallel2006, parareal_NS_mayday}, oceanography \cite{parareal_ocean_model2008}, chemistry \cite{parareal_ModelReduction_mayday2007, parareal_ChemicalKinetics2010} and quantum chemistry \cite{parareal_QuantumSystem_mayday2007}, and the increasing computational capacity of current computers, a variety of efficient numerical schemes exploiting parallel computing architectures emerge during the last few decades. Among them, the parareal algorithm is one of the most popular and success algorithms. The parareal algorithm facilitates speeding up the numerical simulation of the solutions to time dependent equations on the condition of sufficient processors \cite{bal2005convergence}, which is an iterative solver based on a cheap inaccurate sequential coarse-scale time solver and expensive accurate fine-scale time solvers that can be performed in parallel. It was introduced by Lions, Mayday and Turinici \cite{lions_mayday_2001_parareal}. The convergence analysis is studied for nonlinear system of ordinary differential equations and partial differential equations \cite{gander2008nonlinear, Bal_Mayday_AmericanPut}. Recently, new parareal algorithms are developed to solve problems involving discontinuous right-hand sides \cite{gander_2019_DiscontinuousSource, Gander_periodic_discontinousInput}. Coupling of parareal algorithm and some other techniques has been developed in many literatures, see \cite{parareal_multigrid_2014, parareal_integrator_2010, 2013Micro_Macro_parareal, 2016PararealMultiscale}. One of these is  the coupling of parareal algorithm and model reduction techniques. In \cite{2013Micro_Macro_parareal}, a micro-macro parareal algorithm for the time-parallel integration of multiscale-in-time systems is introduced to solve singularly perturbed ordinary differential equations. One contribution of this paper is that the fast variables are eliminated from the coarse propagator, therefore, the associated algorithm only evolves the slow variables. A new coupling strategy for the parareal algorithm with multiscale integrators is introduced in \cite{2016PararealMultiscale}.
	
In this paper, we incorporate the parareal algorithm into WEMsFEM to numerically calculate the time evolution problems efficiently. This new algorithm is called WEMP Algorithm, c.f. Algorithm \ref{algorithm:wavelet+parareal}. This algorithm is divided into two steps: a multiscale space $V_{\text{ms},\ell}^{\rm EW}$ based on WEMsFEM with $\ell$ as the wavelets level parameter is constructed in the first step, and then we apply the parareal algorithm by using $V_{\text{ms},\ell}^{\rm EW}$ as the ansatz space in the second step to obtain the solution more efficiently. The convergence rate of this algorithm is studied in Theorem \ref{prop:wavelet-based}. We proved
	\begin{equation*}
		\begin{aligned}
		\normL{u(\cdot,T^n)-U_{k}^{n}}{D}
		&\lesssim (H+2^{-\ell/2}\|\kappa\|_{L^{\infty}(\mathcal{F}_H)})H|u_0|_2+\delta t\Big(|u_0|_3+\|f\|_{L^2(I;\dot{H}^2(D))}+\|\partial_{t}f\|_{L^2(I;L^2(D))}\Big)\\
		&+
		\Big(\frac{1}{T^{n-1}}\Big)^{k+1}\frac{1}{k!}
		(\Delta T)^{k+1} \normL{u_0}{D},
		\end{aligned}
		\end{equation*}
where $u(\cdot,T^n)$ and $U_{k}^{n}$ are the numerical solution at $T^n=n\times\Delta T$ for $n=2,\cdots$ derived from the backward Euler conforming Galerkin method and WEMP algorithm. The notations $\Delta T$ and $\delta t$ represent for the coarse time step size and fine time step size, respectively. $H$, $\ell$ and $k$ are the space domain mesh size, the level parameter and iteration number.
This implies that taking $\ell=\mathcal{O}(|\log H|)$ and $k=\mathcal{O}(|\log\delta t/\log\Delta T|)$, we recover $\mathcal{O}(H^2+\delta t)$ error, which actually is the error for the backward Euler conforming Galerkin method. Furthermore, the singularity of the solution for $t\to 0$ is reflected in the coefficient of the last term, namely, $\Big(\frac{1}{T^{n-1}}\Big)^{k+1}$.

To demonstrate the performance of our proposed algorithm we present several numerical tests using backward Euler and Crank-Nicolson schemes for the fine time step solver, respectively. Our numerical tests indicate similar convergence as derived in the theoretical results. Furthermore, we take different coarse time steps and observe similar convergence behavior.
	
The paper is organized as follows. We present the basics on the discretization of Problem \eqref{eqn:pde} and the framework of Generalized Multiscale Finite Element Methods (GMsFEMs) in Section \ref{section:problem setting}. Then in Section \ref{sec:multiscale}, we present the construction of multiscale space $V_{\text{ms},\ell}^{\rm EW}$ by WEMsFEMs and its approximation properties. Our main proposed algorithm is presented in Section \ref{sec:WEMP}. Its convergence rate is derived in Section \ref{sec:convergence}. Extensive numerical tests are presented in Section \ref{sec:num}. Finally, we complete our paper with concluding remarks in Section \ref{sec:conclusion}.
	
\section{Problem setting} \label{section:problem setting}
We present in this section the discretization of problem \eqref{eqn:pde}. Firstly, we define the Hilbert space $\dot{H}^s(D)$, which is analogous to \cite[Chapter 3]{thomee1984galerkin}.

Let $\{(\lambda_m,\phi_m)\}_{m=1}^{\infty}$ be the eigenpairs of the following eigenvalue problems with the eigenvalues arranged in a nondecreasing order,
\begin{align*}
\mathcal{L}\phi_m&:=-\nabla\cdot(\kappa\nabla\phi_m)=\lambda_m \phi_m&& \text{ in } D\\
\phi_m&=0 &&\text{ on }\partial D.
\end{align*}
Note that the eigenfunctions $\{\phi_m\}_{m=1}^{\infty}$ form an orthonormal basis in $L^2(D)$, and consequently, each $v\in L^2(D)$ admits the representation
$v=\sum_{m=1}^{\infty}(v,\phi_m)_D\phi_m$ with $(\cdot,\cdot)_D$ being the inner product in $L^2(D)$. The Hilbert space $\dot{H}^s(D)\subset L^2(D)$ is defined by
\begin{align}\label{def:dotH}
\dot{H}^s(D)=\{v\in L^2(D): \sum_{m=1}^{\infty}\lambda_m^s|(v,\phi_m)_D|^2<\infty\}.
\end{align}
The associated norm in $\dot{H}^s(D)$ is $|v|_s=( \sum_{m=1}^{\infty}\lambda_m^s|(v,\phi_m)_D|^2)^{1/2}$.
\begin{remark}
Since the initial data $u_0\in \dot{H}^3(D)\cap H_1^0(D)$, we obtain
\begin{align}\label{eq:reg-u0}
\normL{\mathcal{L}u_0}{D}=|u_0|_2.
\end{align}
Indeed, $u_0$ allows the expression
\[
u_0=\sum_{m=1}^{\infty}(u_0,\phi_m)_D\phi_m.
\]
Taking $L^2(D)$-norm after Operating $\mathcal{L}$ on both sides and utilize the definition \eqref{def:dotH}, we obtain the desired assertion \eqref{eq:reg-u0}.
\end{remark}

Next we recap the regularity results to problem \eqref{eqn:pde}, which can be found, e.g., in \cite{citeulike:8310485}:
\begin{align}\label{eq:regularity-parabolic}
\sum\limits_{j=0}^{2}\int_{0}^{T}\Big|\frac{\partial^j u}{\partial t^j}\Big|_{2(1-j)+2}^2\mathrm{d}t
\lesssim |u_0|_3^2+\|f\|_{L^2(I;\dot{H}^2(D))}^2+\|\partial_{t}f\|_{L^2(I;L^2(D))}^2.
\end{align}
To discretize problem \eqref{eqn:pde}, we first introduce fine and coarse grids. Let $\mathcal{T}_H$ be a regular partition of the domain $D$ into finite elements (triangles, quadrilaterals, tetrahedral, etc.) with a mesh size $H$. We refer to this partition as coarse grids, and its elements as the coarse elements. Then each coarse element is further partitioned into a union of connected fine grid blocks. The fine-grid partition is denoted by $\mathcal{T}_h$ with $h$ being its mesh size. Let $\mathcal{F}_h$ (or $\mathcal{F}_H$) be the collection of all edges in $\mathcal{T}_h$ (or $\mathcal{T}_H$). Over the fine mesh $\mathcal{T}_h$, let $V_h$ be the conforming piecewise linear finite element space:
\[
V_h:=\{v\in V: V|_{E}\in \mathcal{P}_{1}(E) \text{ for all } E\in \mathcal{T}_h\},
\]
where $\mathcal{P}_1(E)$ denotes the space of linear polynomials on the fine element $E\in\mathcal{T}_h$.

The time interval $I:=[0,T]$ is decomposed into a sequence of coarse subintervals $[T^n,T^{n+1}]$ for $n=0,1,\cdots, M_{\Delta}$ of size $\Delta T$ with $\Delta T:=T/M_{\Delta}$ for some $M_{\Delta}\in \mathbb{N}_{+}$ and $T^0:=0$. Each coarse time interval $[T^{n}, T^{n+1}]$ is further discretized with a fine time step $\delta t$.
Let $t_n=n\times\delta t$ for $n=0,1,\cdots,M_{\delta}$ with $M_{\delta}:=T\times\delta t^{-1}$. Note that $\Delta T\gg \delta t$. To simplify the notations, backward Euler method is utilized to discretize the time variable, and we use conforming Galerkin method for the discretization in the spatial variable throughout this paper. Then the fine-scale solution $u_h^{n}\in V_h$ for $n=1,2,\cdots,M_{\delta}$ satisfies
	\begin{equation}\label{eqn:weakform_h}
	\left\{
	\begin{aligned}
	(\frac{u_h^{n}-u_h^{n-1}}{\delta t},v_h)_D+a(u_h^{n},v_h)&=(f(\cdot,t_n),v_h)_{D} \quad \text{ for all } v_h\in V_h,\\
	u_h^0&=I_h u_0.
	\end{aligned}
	\right.
	\end{equation}
	Here, the bilinear form $a(\cdot,\cdot)$ on $V\times V$ is defined by
	\[
	a(v_1,v_2):=\int_{D}\kappa \nabla v_1\cdot \nabla v_2\dx \text{ for all } v_1,v_2\in V.
	\]
	$I_h$ is a proper projection from $V$ to $V_h$. Furthermore, we define the energy norm $\|v\|_{H^1_{\kappa}(D)}:=\sqrt{a(v,v)}$ for all $v\in V$.
	
	The fine-scale solution $u_h^{n}$ will serve as a reference solution in Section \ref{sec:num}. Note that due to the presence of multiple scales in the coefficient $\kappa$, the fine-scale mesh size $h$ should be commensurate with the smallest scale and thus it can be very small in order to obtain an accurate solution. This necessarily involves huge computational complexity, and more efficient methods are in great demand.
	
	
	In this work, we are concerned with flow problems with high-contrast heterogeneous coefficients,
	which involve multiscale permeability fields, e.g., permeability fields with vugs and faults, and
	furthermore, can be parameter-dependent, e.g., viscosity. Under such scenario, the computation of the
	fine-scale solution $u_h$ is vulnerable to high computational
	complexity, and one has to resort to multiscale methods. The GMsFEM has been extremely successful
	for solving multiscale flow problems, which we briefly recap below. 
	
	The GMsFEM aims at solving Problem \eqref{eqn:pde} on the coarse mesh $\mathcal{T}_{H}$
	cheaply, which, meanwhile, maintains a certain accuracy compared to the fine-scale solution $u_h$. To describe the
	GMsFEM, we need a few notations. The vertices of $\mathcal{T}_H$
	are denoted by $\{O_i\}_{i=1}^{N}$, with $N$ being the total number of coarse nodes.
	The coarse neighborhood associated with the node $O_i$ is denoted by
	\begin{equation} \label{neighborhood}
	\omega_i:=\bigcup\{ K_j\in\mathcal{T}_H: ~~~ O_i\in \overline{K}_j\}.
	\end{equation}
	We refer to Figure~\ref{schematic} for an illustration of neighborhoods and elements subordinated to the coarse
	discretization $\mathcal{T}_H$. Throughout, we use $\omega_i$ to denote a coarse neighborhood. Furthermore, let $\mathcal{F}_h(\partial \omega_i)$ (or $\mathcal{F}_H(\partial \omega_i)$) be the restriction of $\mathcal{F}_h$ on $\partial\omega_i$ (or $\mathcal{F}_H$ on $\partial\omega_i$).
	
	\begin{figure}[htb]
		\centering
		\includegraphics[width=0.65\textwidth]{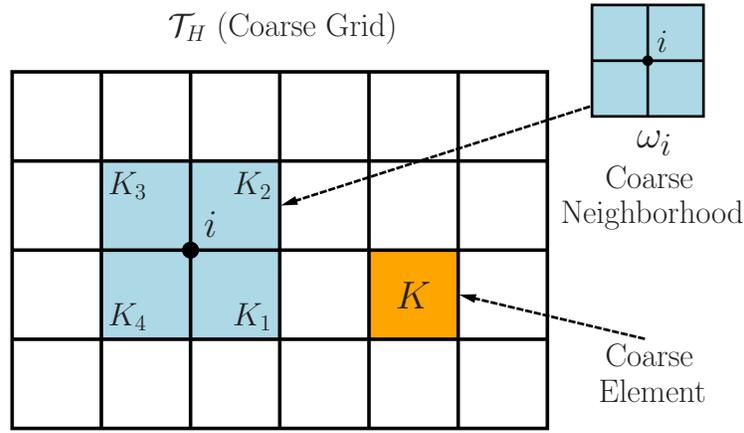}
		\caption{Illustration of a coarse neighborhood and coarse element.}
		\label{schematic}
	\end{figure}
	
	Next, we outline the GMsFEM with a conforming Galerkin (CG) formulation. Let $1\leq i\leq N$ be a certain coarse node. Note that $\omega_i$ is
	the support of the multiscale basis functions to be identified, and $\ell_i\in \mathbb{N}_{+}$ is the number of those multiscale basis functions associated with $\omega_i$. They are denoted as $\psi_k^{\omega_i}$ for $k=1,\cdots,\ell_i$.  Throughout, the subscript $i$ denotes the $i$-th coarse node or coarse neighborhood.
	Generally, the GMsFEM utilizes multiple basis functions per coarse neighborhood $\omega_i$,
	and the index $k$ represents the numbering of these basis functions.
	In turn, the CG multiscale solution $u_{\text{ms}}$ is sought as $u_{\text{ms}}=\sum\limits_{i=1}^{N}\sum\limits_{k=1}^{\ell_i} c_{k}^i \psi_{k}^{\omega_i}$.
	Once the basis functions $\psi_k^{\omega_i}$ are identified, the multiscale solution  $u_{\text{ms}}^{n}\in V_{\text{ms}}$ for $n=1,\cdots,M_{\delta}$ satisfies
	\begin{equation}\label{eq:multiscale}
	\left\{
	\begin{aligned}
	(\frac{u_{\text{ms}}^{n}-u_{\text{ms}}^{n-1}}{\delta t},v_{\text{ms}})_D+a(u_{\text{ms}}^{n},v_{\text{ms}})&=(f(\cdot,t_n),v_{\text{ms}})_{D}
	\quad \text{for all} \, \, v_{\text{ms}}\in
	V_{\text{ms}},\\
	u_{\text{ms}}^0&=I_{\text{ms}} u_0,
	\end{aligned}
	\right.
	\end{equation}
	where $V_{\text{ms}}$ denotes the multiscale space spanned by these multiscale basis functions and $I_{\text{ms}}$ is a projection operator from $V$ to $V_{\text{ms}}$.

Note that we need to build the multiscale space $V_{\text{ms}}$ to solve for $u_{\text{ms}}^{n}$ from \eqref{eq:multiscale} for $n=1,\cdots,M_{\delta}$. The construction of $V_{\text{ms}}$ will be presented in Section \ref{sec:multiscale}. Note also that we need a very tiny fine-scale time step $\delta t$ to guarantee a reasonable approximation property of  $u_{\text{ms}}^{n}$ to $u(\cdot,t_n)$ for $n=1,\cdots,M_{\delta}$ due to, e.g., the singularity of the solution $u(\cdot,t)$ at $t=0$ when the source term $f$ fails to belong to certain smooth functional space. Consequently, the computational complexity of the multiscale method \eqref{eq:multiscale} can be extremely expensive. For this reason, we present in Section \ref{sec:WEMP} a multiscale algorithm incorporated with the parareal algorithm to reduce further this part of computational cost.
	
We end this section with assumptions on the permeability field $\kappa$:
	\begin{assumption}[Structure of $D$ and $\kappa$]\label{ass:coeff}
		Let $D$ be a domain with a $C^{1,\alpha}$ $(0<\alpha<1)$ boundary $\partial D$,
		and $\{D_i\}_{i=1}^m\subset D$ be $m$ pairwise disjoint strictly convex open subsets, {each with a $C^{1,\alpha}$ boundary
			$\Gamma_i:=\partial D_i$,} and denote $D_0=D\backslash \overline{\cup_{i=1}^{m} D_i}$. 
		Let the permeability coefficient $\kappa$ be piecewise regular function defined by
		\begin{equation}
		\kappa=\left\{
		\begin{aligned}
		&\eta_{i}(x) &\text{ in } D_{i},\\
		&1 &\text{ in }D_0.
		\end{aligned}
		\right.
		\end{equation}
		Here $\eta_i\in C^{\mu}(\bar{D_i})$ with $\mu\in (0,1)$ for $i=1,\cdots,m$. Denote $\etamaxmin{min}:=\min\limits_{i}\{\min\limits_{x\in D_i}\{\eta_i(x)\}\}\geq 1$ and $\etamaxmin{max}:=\max\limits_{i}\{\|\eta_i\|_{C_0(D_i)}\}$.
	\end{assumption}
\section{Multiscale space construction}\label{sec:multiscale}
This section is concerned with the construction of the multiscale space by means of the Wavelet-based Edge Multiscale Finite Element Methods (WEMsFEM) \cite{li2019convergence,fu2019wavelet}.
	
The algorithm is presented in Algorithm \ref{algorithm:wavelet}. Given the level parameter $\ell\in \mathbb{N}$, and the type of wavelets on each edge of the coarse neighborhood $\omega_i$, one can obtain the local multiscale space $V_{i,\ell}$ on $\omega_i$ by solving $2^{\ell+2}$  local problems as in Step 2. In Step 3, we can use these local multiscale space to build the global multiscale space $V_{\text{ms},\ell}^{\rm EW}$ by multiplying the partition of unity functions $\chi_i$. Finally, we can solve the multiscale problem \eqref{eq:multiscale} using this global multiscale space.

To this end, we begin with an initial coarse space $V^{\text{init}}_0 = \text{span}\{ \chi_i \}_{i=1}^{N}$.
The functions $\chi_i$ are the standard multiscale basis functions on each coarse element $K\in \mathcal{T}_{H}$ defined by
\begin{alignat}{2} \label{pou}
-\nabla\cdot(\kappa(x)\nabla\chi_i) &= 0  &&\quad\text{ in }\;\;K, \\
\chi_i &= g_i &&\quad\text{ on }\partial K, \nonumber
\end{alignat}
where $g_i$ is affine over $\partial K$ with $g_i(O_j)=\delta_{ij}$ for all $i,j=1,\cdots, N$. Recall that $\{O_j\}_{j=1}^{N}$ are the set of coarse nodes on $\mathcal{T}_{H}$. Next we define the weighted coefficient:
\begin{equation}\label{defn:tildeKappa}
\widetilde{\kappa} =H^2 \kappa \sum_{i=1}^{N}  | \nabla \chi_i |^2.
\end{equation}
Furthermore, let $\widetilde{\kappa}^{-1}$ be defined by
\begin{equation}\label{eq:inv-tildeKappa}
\widetilde{\kappa}^{-1}(x):=
\left\{
\begin{aligned}
&\widetilde{\kappa}^{-1}, \quad &&\text{ when } \widetilde{\kappa}(x)\ne 0,\\
&1, \quad &&\text{ otherwise }.
\end{aligned}
\right.
\end{equation}
	\begin{algorithm}[H]
		\caption{Wavelet-based Edge Multiscale Finte Element Method (WEMsFEM)}
		\label{algorithm:wavelet}
		\textbf{Input}: The level parameter $\ell\in \mathbb{N}$; coarse neighborhood $\omega_i$ and its four coarse edges $\Gamma_{i,k}$ with
		$k=1,2,3,4$, i.e., $\cup_{k=1}^{4}\Gamma_{i,k}=\partial\omega_i$;
		the subspace $V_{\ell,k}\subset L^2(\Gamma_{i,k})$ up to level $\ell$ on each coarse edge $\Gamma_{i,k}$.
		
		\textbf{Output}: Multiscale solution $u_{\text{ms},\ell}^{\text{EW}}$.
		\begin{algorithmic}[1]
			\State Denote
			$V_{i,\ell}:=\oplus_{k=1}^{4}V_{\ell,k}.$
			Then the number of basis functions in $V_{i,\ell}$ is $4\times 2^{\ell}=2^{\ell+2}$.
			Denote these basis
			functions as $v_k$ for $k=1,\cdots, 2^{\ell+2}$.
			\State Calculate local multiscale basis $\mathcal{L}^{-1}_i (v_k)$ for all $k=1,\cdots,2^{\ell+2}$.
			Here, $\mathcal{L}^{-1}_i (v_k):=v$ satisfies:
			$
			\left\{ \begin{aligned}
			\mathcal{L}_i v&:=-\nabla\cdot(\kappa\nabla v)=0&& \mbox{in }\omega_i,\\
			v&=v_k&& \mbox{on }\partial\omega_i.
			\end{aligned}\right.
			$
\State Solve one local problem.

$
\left\{
\begin{aligned}
-\nabla\cdot(\kappa\nabla v^{i})&=\frac{\widetilde{\kappa}}{\int_{\omega_i}\widetilde{\kappa}\dx} \quad&&\text{ in } \omega_i,\\
-\kappa\frac{\partial v^{i}}{\partial n}&=|\partial\omega_i|^{-1}\quad&&\text{ on }\partial \omega_i.
\end{aligned}
\right.
$
			\State Build global multiscale space.
			$
			V_{\text{ms},\ell}^{\rm EW}  := \text{span} \{\chi_i\mathcal{L}^{-1}_i(v_k),\chi_i v^{i} : \,  \, 1 \leq i \leq N,\,\,\, 1 \leq k \leq  2^{\ell+2}\}.
			$
			\State Solve for \eqref{eq:multiscale} by conforming Galerkin method in
			$V_{\text{ms},\ell}^{\rm EW}$ to obtain $u_{\text{ms},\ell}^{\text{EW},n}$ for $n=1,\cdots,M_{\delta}$.
		\end{algorithmic}
	\end{algorithm}

	In the following, we define the $L^2(\partial\omega_i)$-orthogonal projection $\mathcal{P}_{i,\ell}$ onto the local multiscale space up to level $\ell$ by $\mathcal{P}_{i,\ell}: L^2(\partial\omega_i)\to V_{i,\ell}$ satisfying
	\begin{align}\label{eq:projectionEDGE}
	\mathcal{P}_{i,\ell}(v):=\sum\limits_{j=1}^{2^{\ell+2}}(v,\psi_{j})_{\partial\omega_i}\mathcal{L}_{i}^{-1}(\psi_{j}) \quad \text{ for all }v\in L^2(\partial\omega_i).
	\end{align}
	Here, we denote $\psi_{j}$ for $j=1,\cdots, 2^{\ell+2}$ as the Haar wavelets defined on the four edges of $\omega_i$ of level $\ell$ and the local operator $\mathcal{L}_i$ is defined as in Algorithm \ref{algorithm:wavelet}.
	
	Let $\mathcal{L}:=-\nabla\cdot(\kappa\nabla \cdot)$ be the elliptic operator defined on $H^1_{\kappa}(D)$, and  $\mathcal{P}_{\ell}$ be the projection onto the multiscale space $V_{\text{ms},\ell}^{\rm EW}$ given by
	\begin{align}\label{eq:projectionglo}
	\mathcal{P}_{\ell}(v):=\sum_{i=1}^{N}\chi_i\mathcal{P}_{i,\ell}(v) \text{ for all }v\in H^1_{\kappa}(D).
	\end{align}
	Then \cite[Proposition 5.2]{fu2018} implies that for any $v\in L^2(D)\cap L^2_{\widetilde{\kappa}^{-1}}(D)$, it holds
	\begin{align}
	\normE{\mathcal{L}^{-1}v-\mathcal{P}_{\ell}(\mathcal{L}^{-1}v)}{D}&\lesssim H\normI{\widetilde{\kappa}}{D}^{1/2}\normLii{v}{D}+2^{-\ell/2}\|\kappa\|_{L^{\infty}(\mathcal{F}_H)}
	\|v\|_{L^2(D)}. \label{eq:glo-energy}
	\end{align}
Furthermore, the following result holds.
	\begin{lemma}[Approximation properties of $P_{\ell}$ ]
		There holds
		\begin{align}
		\forall v\in L^2(D):\quad\normL{\mathcal{L}^{-1}v-\mathcal{P}_{\ell}(\mathcal{L}^{-1}v)}{D}&\lesssim (H+2^{-\ell/2}\|\kappa\|_{L^{\infty}(\mathcal{F}_H)})H\normL{v}{D}
		.\label{eq:glo-L2rietz}
		\end{align}
	\end{lemma}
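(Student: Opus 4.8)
The plan is to localise the error with the partition of unity $\{\chi_i\}_{i=1}^{N}$ and, on each coarse neighbourhood $\omega_i$, to split it into a $\kappa$-harmonic part carrying the wavelet (edge) approximation error and a ``bubble'' part carrying the source $v$; the extra power of $H$ over the energy estimate \eqref{eq:glo-energy} will be produced by the interior smoothing of an elliptic extension on $\omega_i$.

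First I would set $u:=\mathcal{L}^{-1}v$. Since $\sum_{i=1}^{N}\chi_i\equiv 1$ on $D$ and $\mathcal{P}_{\ell}u=\sum_{i=1}^{N}\chi_i\mathcal{P}_{i,\ell}u$ by \eqref{eq:projectionglo}, one has $u-\mathcal{P}_{\ell}u=\sum_{i=1}^{N}\chi_i(u-\mathcal{P}_{i,\ell}u)$, so that, using $0\le\chi_i\le1$ and the bounded overlap of $\{\omega_i\}$,
\[
\normL{u-\mathcal{P}_{\ell}u}{D}\ \lesssim\ \Bigl(\sum_{i=1}^{N}\normL{u-\mathcal{P}_{i,\ell}u}{\omega_i}^{2}\Bigr)^{1/2}.
\]
By \eqref{eq:projectionEDGE}, $\mathcal{P}_{i,\ell}u$ is the $\kappa$-harmonic extension $\mathcal{L}_i^{-1}$ of the $L^2(\partial\omega_i)$-orthogonal projection of $u|_{\partial\omega_i}$ onto the level-$\ell$ Haar space on $\partial\omega_i$. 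Comparing with the local problem $\mathcal{L}_i u=v$ in $\omega_i$, $u=u|_{\partial\omega_i}$ on $\partial\omega_i$, I would split $u-\mathcal{P}_{i,\ell}u=A_i+b_i$, where $A_i$ is $\mathcal{L}_i$-harmonic in $\omega_i$ with boundary value the Haar approximation defect of $u|_{\partial\omega_i}$, and $b_i$ solves $\mathcal{L}_i b_i=v$ in $\omega_i$ with $b_i=0$ on $\partial\omega_i$.

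For $b_i$ I would test with $b_i$ and use the Poincar\'e inequality on $\omega_i$ (genuine boundary, diameter $\simeq H$), giving $\normE{b_i}{\omega_i}\lesssim H\normL{v}{\omega_i}$ and hence $\normL{b_i}{\omega_i}\lesssim H\normE{b_i}{\omega_i}\lesssim H^{2}\normL{v}{\omega_i}$; a bounded-overlap sum yields the $H^{2}\normL{v}{D}$ term. For $A_i$ I would combine (i) a Haar approximation estimate on the coarse edges, $\normL{A_i}{\partial\omega_i}\lesssim (2^{-\ell}H)^{1/2}\,|u|_{H^{1/2}(\partial\omega_i)}$ (finest Haar scale $\simeq 2^{-\ell}H$), with (ii) an interior $L^{2}$ bound for the elliptic extension, $\normL{A_i}{\omega_i}\lesssim H^{1/2}\normL{A_i}{\partial\omega_i}$, obtained by rescaling the mapping property $L^{2}(\partial\omega_i)\to H^{1/2}(\omega_i)\hookrightarrow L^{2}(\omega_i)$ of $\mathcal{L}_i^{-1}$. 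Together with $|u|_{H^{1/2}(\partial\omega_i)}\lesssim\normE{u}{\omega_i}$ (modulo the $H$-scaling, with the constant governed by $\|\kappa\|_{L^{\infty}(\mathcal{F}_{H})}$ and the transmission regularity across the interfaces $\Gamma_j$) and $\normE{\mathcal{L}^{-1}v}{D}\lesssim\normL{v}{D}$, this gives $\normL{A_i}{\omega_i}\lesssim 2^{-\ell/2}\|\kappa\|_{L^{\infty}(\mathcal{F}_{H})}H\,\normE{u}{\omega_i}$, whose bounded-overlap sum produces the $2^{-\ell/2}\|\kappa\|_{L^{\infty}(\mathcal{F}_{H})}H\,\normL{v}{D}$ term; adding the two contributions gives the claim.

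The hard part will be step (ii): the extra half power $H^{1/2}$ relative to \eqref{eq:glo-energy} — where the harmonic defect $A_i$ enters only through its $H^1_\kappa(\omega_i)$-norm — must be extracted from the interior smoothing of the heterogeneous elliptic extension, and this has to be uniform in the heterogeneity, allowing only the harmless dependence on $\|\kappa\|_{L^{\infty}(\mathcal{F}_{H})}$ (moderate precisely because the coarse edges stay away from the high-contrast inclusions) and on the transmission regularity near $\Gamma_j$, with no uncontrolled dependence on the contrast $\Lambda$. An alternative is a global Aubin--Nitsche argument with dual solution $z:=\mathcal{L}^{-1}(u-\mathcal{P}_{\ell}u)$, writing $\normL{u-\mathcal{P}_{\ell}u}{D}^{2}=a(u-\mathcal{P}_{\ell}u,\,z-\mathcal{P}_{\ell}z)+a(u-\mathcal{P}_{\ell}u,\,\mathcal{P}_{\ell}z)$ and bounding the first term by \eqref{eq:glo-energy} applied to both factors; but the second term does not vanish, since $\mathcal{P}_{\ell}$ in \eqref{eq:projectionglo} is a partition-of-unity interpolant rather than the Ritz projection, so controlling it throws one back on the local $\kappa$-harmonic/bubble structure above — which is why I would take the direct route.
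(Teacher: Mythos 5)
Your proposal follows essentially the same route that the paper's one-line proof delegates to \cite[Lemma 5.1 and Proposition 5.2]{fu2018}: localize the error with the partition of unity, split each local error into the $\kappa$-harmonic extension $A_i$ of the Haar approximation defect on $\partial\omega_i$ plus a zero-boundary bubble $b_i$ driven by $v$, and gain the extra factor of $H$ over the energy estimate \eqref{eq:glo-energy} from Poincar\'e on the bubble and from the interior $L^2$ bound on the harmonic part. The decomposition $u-\mathcal{P}_{i,\ell}u=A_i+b_i$, the bound $\normL{b_i}{\omega_i}\lesssim H^2\normL{v}{\omega_i}$, the Haar rate $(2^{-\ell}H)^{1/2}$ in $L^2(\partial\omega_i)$ for $H^{1/2}$ traces, and the bookkeeping of powers of $H$ are all correct, as is the pointwise Cauchy--Schwarz reassembly through $\sum_i\chi_i\equiv 1$. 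The one genuine gap is the step you yourself flag as the hard part: the bound $\normL{A_i}{\omega_i}\lesssim H^{1/2}\normL{A_i}{\partial\omega_i}$ for the $\kappa$-harmonic extension, uniformly in the contrast. For merely bounded measurable coefficients this $L^2(\partial\omega_i)\to L^2(\omega_i)$ mapping property can fail (the elliptic measure need not be absolutely continuous with respect to surface measure), so it genuinely requires Assumption \ref{ass:coeff} (piecewise H\"older coefficients, $C^{1,\alpha}$ interfaces) and a Li--Vogelius-type interior estimate; likewise the precise emergence of the factor $\|\kappa\|_{L^{\infty}(\mathcal{F}_H)}$, rather than the global contrast $\Lambda$, in the trace step is exactly the quantitative content of the cited local lemmas. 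Since the paper itself proves nothing beyond citing \cite{fu2018} for these two local facts, your reconstruction is as complete as the paper's argument, but it is not self-contained: the contrast-uniform extension bound is the real substance and remains to be supplied.
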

	\begin{proof}
This assertion can be derived from the proofs of \cite[Lemma 5.1 and Proposition 5.2]{fu2018}.
	\end{proof}

	\section{Wavelet-based Edge Multiscale Parareal Algorithm}\label{sec:WEMP}
	We construct in this section the Wavelet-based Edge Multiscale Parareal (WEMP) Algorithm, cf. Algorithm \ref{algorithm:wavelet+parareal}, which is divided into two main steps. In the first step, the multiscale space $V_{\text{ms},\ell}^{\rm EW}$ for $\ell\in\mathbb{N}_{+}$ is built based on Section \ref{sec:multiscale}. This multiscale space serves as the trial space and test space for our conforming Galerkin method, cf. \eqref{eq:multiscale}. Then the parareal algorithm is utilized in the second step to solve the problem.
	
We first recap a few terminologies commonly appeared in parareal algorithm.
	
	The one step coarse solver on the time domain $(0,T)$ is
	\begin{align}
	U^{n+1}&=E_{\Delta }(T^n, U^n), \quad U^{0}=I_h u_0\nonumber,\\
	U^{n+1}_{\text{ms},\ell}&=E_{\Delta }^{{\text{ms},\ell}}(T^n, U^n_{\text{ms},\ell}), \quad U^{0}_{\text{ms},\ell}=\mathcal{P}_{\ell}u_0, \label{eq:coarseSolver}
	\end{align}
	which yields $U^{n+1}$ (or $U^{n+1}_{\text{ms},\ell}$) as a coarse approximation to $u(\cdot,T^{n+1})$, provided with an approximation $U^n$ (or $U^{n}_{\text{ms},\ell}$) of $u(\cdot,T^n)$.
	In matrix form, it reads
	\begin{align*}
	U^{n+1}&=(M+\Delta T \times A)^{-1}M(U^n+\Delta T\times F^{n+1}),\\
	U^{n+1}_{\text{ms},\ell}&=\Phi_{\text{ms},\ell}(\Phi^{T}_{\text{ms},\ell}M\Phi_{\text{ms},\ell}+\Delta T \times \Phi^{T}_{\text{ms},\ell}A\Phi_{\text{ms},\ell})^{-1}\Phi^{T}_{\text{ms},\ell}M(U^n_{\text{ms},\ell}+\Delta T\times F^{n+1}).
	\end{align*}
Here, $A$ and $M$ are the mass matrices and stiffness matrices corresponding to the discretization of the elliptic operator $-\nabla\cdot(\kappa\nabla \cdot)$ in the finite element space $V_h:=\text{span}\{\phi_1,\cdots,\phi_{\text{dof}_h}\}$. Here, $\text{dof}_h$ denotes the dimension of $V_h$. $(F^{n+1})_i:=\int_{D}f(\cdot,t_{n+1})\phi_i\dx$ for all $i=1,\cdots,\text{dof}_h$.
$\Phi_{\text{ms},\ell}$ denotes a matrix with columns composed of the coefficients of multiscale basis functions in $V_{\text{ms},\ell}^{\rm EW}$ in the finite element space $V_h$.  	

The one step fine solver
	\begin{align}
	\psi&=\mathcal{F}_{\delta }(s,\sigma,\phi)\nonumber,\\
	\psi_{\text{ms},\ell}&=\mathcal{F}^{{\text{ms},\ell}}_{\delta }(s,\sigma,\phi),\label{eq:fineSolver}
	\end{align}
 yields an approximation $\psi(\cdot,s+\sigma)$ (or $\psi_{\text{ms},\ell}(\cdot,s+\sigma)$) to the solution $u(\cdot,s+\sigma)$ with the initial condition $\psi(\cdot,s)=\phi$ (or $\psi_{\text{ms},\ell}(\cdot,s)=\mathcal{P}_{\ell}(\phi)$) and a uniform discrete time step $\delta $ for all $s\in (0,T)$ and $\sigma\in (0,T-s)$ in the infinite dimensional space $V$ (or in the ansatz space $V_{\text{ms},\ell}^{\rm EW}$) with $s/\delta t\in\mathbb{N}_{+}$.
	
	We also define the semi-discretization in space solver
	\begin{align}
u_{\text{ms},\ell}(\cdot,s+\sigma)&=\mathcal{F}^{{\text{ms},\ell}}(s,\sigma,\phi),\label{eq:exactSolver}
	\end{align}
	which yields an approximation $u_{\text{ms},\ell}(\cdot,s+\sigma)$ to the solution $u(\cdot,s+\sigma)$ with initial condition $u_{\text{ms},\ell}(\cdot,s)=\mathcal{P}_{\ell}(\phi)$ for all $s\in (0,T)$, $\sigma\in (0,T-s)$ in the ansatz space $V_{\text{ms},\ell}^{\rm EW}$. We will denote $\bar{E}_{\Delta }^{{\text{ms},\ell}}(T^n, U^n_{\text{ms},\ell})$ as the one step coarse solver with $f=0$. We will define $\bar{\mathcal{F}}^{{\text{ms},\ell}}(s,\sigma,\phi)$ and $\bar{\mathcal{F}}^{{\text{ms},\ell}}_{\delta}(s,\sigma,\phi)$ analogously.
	
	Note that the cheap multiscale coarse solver $E_{\Delta}^{\text{ms},\ell}$ is sequentially utilized over the global time interval $I$ to provide a rough approximation to $u(\cdot,T^{n+1})$, while the expensive accurate multiscale fine solver $\mathcal{F}_{\delta }^{\text{ms},\ell}$ is applied in each subinterval $[T^n,T^{n+1}]$ for $n=0,1,\cdots, M_{\Delta}-1$ independently. This local fine solver will embed more detailed information to the approximation of $u(\cdot,T^{n+1})$, which usually differs from the one obtained from the global coarse solver. In the process of parareal algorithm, a correction operator is very important to improve the approximation to $u(\cdot,T^{n+1})$ based on the discrepancy between the coarse solver and fine solver, which is defined by
	\[
	\mathcal{S}(T^{n},U^{n}_{\text{ms},\ell}):=\mathcal{F}^{{\text{ms},\ell}}_{\delta }(T^{n},\Delta T,U^{n}_{\text{ms},\ell})-E^{{\text{ms},\ell}}_{\Delta }(T^n, U^n_{\text{ms},\ell}) \quad\text{and} \quad U^{0}_{\text{ms},\ell}=\mathcal{P}_{\ell}u_0
	\]
	for all $n=0,1,\cdots,M_{\Delta}-1$.
	
	Now we are ready to present our main algorithm, i.e., Algorithm \ref{algorithm:wavelet+parareal}. To obtain a good approximation to the solution of \eqref{eqn:pde} at discrete time points $\{T^{n}\}$ for $n=1,\cdots, M_{\Delta}$, we first construct a proper multiscale space $V_{\text{ms},\ell}^{\rm EW}$ based on the WEMsFEM, i.e., Algorithm \ref{algorithm:wavelet}, which corresponds to Steps 1 to 3. This allows one to solve \eqref{eq:multiscale} using the constructed multiscale space $V_{\text{ms},\ell}^{\rm EW}$ and obtain an intermediate solution $u_{\text{ms},\ell}^{\text{EW},n}$ with certain accuracy depending on the spatial coarse mesh size $H$ and level parameter $\ell$. This solution will only be utilized in the convergence analysis.
	
	In order to further reduce the computational cost, we apply the parareal algorithm in the following. Given the iteration parameter $k$, we apply the global coarse solver \eqref{eq:coarseSolver} in Step 6 to obtain $U_k^{n+1}$, which is an approximation to the intermediate solution $u_{\text{ms},\ell}^{\text{EW},n+1}$ from Algorithm \ref{algorithm:wavelet}. Using the coarse solution $U_k^{n}$ as the initial condition, the fine solver \eqref{eq:fineSolver} subsequently is used to calculate the fine solution $u_k^{n+1}$ in paralell on each local time subinterval $[T^n, T^{n+1}]$.
	Then we calculate the discrepancy between the coarse solution and the fine solution in Step 8 on each discrete coarse time point $T^n$ for $n=1,2,\cdots,M_{\Delta}$, and denote it as $\mathcal{S}(T^{n-1},U^{n-1}_{k})$.
	Subsequently, this jump term is utilized in Step 9 to update the coarse solution via the global coarse solver \eqref{eq:coarseSolver}. This process will be performed iteratively until certain tolerance on the jump terms is satisfied.
	\begin{algorithm}
		\caption{Wavelet-based Edge Multiscale Parareal (WEMP) Algorithm}
		\label{algorithm:wavelet+parareal}
		{\bf Input:} The initial data $u_0$, the source term $f$; tolerance $\epsilon$; the level parameter $\ell\in \mathbb{N}$; coarse neighborhood $\omega_i$ and its four coarse edges $\Gamma_{i,j}$ with
		$j=1,2,3,4$, i.e., $\cup_{j=1}^{4}\Gamma_{i,j}=\partial\omega_i$;
		the subspace $V_{\ell,j}^{i}\subset L^2(\Gamma_{i,j})$ up to level $\ell$ on each coarse edge $\Gamma_{i,j}$.\\
		{\bf Output:} $U$.
		\begin{algorithmic}[1]
			\State Denote
			$V_{i,\ell}:=\oplus_{k=1}^{4}V_{\ell,k}^{i}.$
			Then the number of basis functions in $V_{i,\ell}$ is $4\times 2^{\ell}=2^{\ell+2}$.
			Denote these basis
			functions as $v_k$ for $k=1,\cdots, 2^{\ell+2}$.
			\State Calculate local multiscale basis $\mathcal{L}^{-1}_i (v_m)$ for all $m=1,\cdots,2^{\ell+2}$. Here, $\mathcal{L}^{-1}_i (v_m):=v$ satisfies:
			$
			\left\{ \begin{aligned}
			\mathcal{L}_i v&:=-\nabla\cdot(\kappa\nabla v)=0&& \mbox{in }\omega_i,\\
			v&=v_m&& \mbox{on }\partial\omega_i.
			\end{aligned}\right.
			$
			\State Build global multiscale space.
			$
			V_{\text{ms},\ell}^{\rm EW}  := \text{span} \{\chi_i\mathcal{L}^{-1}_i(v_k),\chi_i v^{i} : \,  \, 1 \leq i \leq N,\,\,\, 1 \leq k \leq  2^{\ell+2}\}.
			$
			\State $k=0$, err$=1$.
			\While{err$>\epsilon$}
			\State Compute $U_k^{n+1}$ for $n=0,\cdots, M_{\Delta}-1$:
			\begin{align*}
			U^{n+1}_k&=E_{\Delta}^{\text{ms},\ell}(T^n, U^n_k),\\
			U_k^{0}&=\mathcal{P}_{\ell}u_{0}.
			\end{align*}
			\State Compute $u_k^{n+1}$ for $n=0,\cdots, M_{\Delta}-1$ on each local time subinterval [$T^n$,$T^{n+1}$]:
			\begin{align*}
			u_k^{n+1}=\mathcal{F}_{\delta}^{\text{ms},\ell}(T^{n},\Delta T,U_{k}^{n}).
			\end{align*}
			\State Compute the jumps for $n=1,\cdots, M_{\Delta}$:
			\[
			\mathcal{S}(T^{n-1},U^{n-1}_{k}):=u_{k}^{n}-U_{k}^n.
			\]
			\State Compute the corrected coarse solutions $U^{n+1}_{k+1}$
			for $n=0,\cdots, M_{\Delta}-1$:
			\begin{align*}
			U^{n+1}_{k+1}&=\mathcal{S}(T^n,U^n_{k})+E_{\Delta}^{\text{ms},\ell}(T^n, U^n_{k+1}),\\
			U_{k+1}^{0}&=\mathcal{P}_{\ell}u_{0}.
			\end{align*}
			\State Calculate the error:
			\[{\rm err}:=1/{M_\Delta}\sum\limits_{n=1}^{M_{\Delta}}\|U_{k+1}^{n}-U_{k}^{n}\|_{\ell_2}.\]
			$k\gets k+1$
			\EndWhile
			\State $U_n:=U^{n}_{k}$ and $U:=[U_0,\cdots,U_{M_{\Delta}}]$.
		\end{algorithmic}
	\end{algorithm}

	\section{Convergence study}\label{sec:convergence}
	We present in this section the convergence analysis for Algorithm \ref{algorithm:wavelet+parareal}. To this end, we first prove the boundedness and Lipschitz continuity properties of the coarse solver $E_{\Delta }^{{\text{ms},\ell}}$ and the jump operator $\mathcal{S}$ in the multiscale space $V_{\text{ms},\ell}^{\rm EW}$:
	\begin{lemma}\label{lemma:parareal}
		For all $n\in \{1,\cdots,M_{\Delta}-1\}$, the following properties hold.
		\begin{itemize}
			\item[1.]The one step coarse solver $E_{\Delta }^{{\text{ms},\ell}}$ is Lipschitz in $V_{\text{ms},\ell}^{\rm EW}$. For all $v_1,v_2\in V_{\text{ms},\ell}^{\rm EW}$, there holds
			\[
			\quad\normL{E_{\Delta }^{{\text{ms},\ell}}(T^n,v_1)-E_{\Delta }^{{\text{ms},\ell}}(T^n,v_2)}{D}\leq
			\normL{v_1-v_2}{D}.
			\]
			\item[2.] The jump operator $\mathcal{S}$ is an approximation of order 1 with Lipschitz regularity. For all $v_1,v_2\in V_{\text{ms},\ell}^{\rm EW}$, there holds
			\begin{align}\label{eq:app-jump}
			\quad&\normL{\mathcal{S}(T^n,v_1)-\mathcal{S}(T^n,v_2)}{D}\leq \frac{1}{T^n T^{n+1}}(\Delta T)^2\normL{v_1-v_2}{D}.
			\end{align}
		\end{itemize}
	\end{lemma}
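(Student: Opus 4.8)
The plan is to treat the two claims separately, both resting on the elementary fact that one step of the backward Euler scheme on $V_{\text{ms},\ell}^{\rm EW}$ is an affine map whose linear part is a contraction in $L^2(D)$. For item 1, I would first remove the source term: writing $E_{\Delta}^{\text{ms},\ell}(T^n,v)=\bar E_{\Delta}^{\text{ms},\ell}(T^n,v)+g_n$ with $g_n$ depending only on $f$, linearity of $\bar E_{\Delta}^{\text{ms},\ell}$ gives $E_{\Delta}^{\text{ms},\ell}(T^n,v_1)-E_{\Delta}^{\text{ms},\ell}(T^n,v_2)=\bar E_{\Delta}^{\text{ms},\ell}(T^n,w)$ with $w:=v_1-v_2$. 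Setting $w_\Delta:=\bar E_{\Delta}^{\text{ms},\ell}(T^n,w)\in V_{\text{ms},\ell}^{\rm EW}$, the defining relation of the coarse solver reads $(\tfrac{w_\Delta-w}{\Delta T},\phi)_D+a(w_\Delta,\phi)=0$ for all $\phi\in V_{\text{ms},\ell}^{\rm EW}$; testing with $\phi=w_\Delta$, discarding $a(w_\Delta,w_\Delta)=\normE{w_\Delta}{D}^2\ge 0$, and bounding $(w,w_\Delta)_D\le\normL{w}{D}\normL{w_\Delta}{D}$ yields $\normL{w_\Delta}{D}\le\normL{w}{D}$. This is the unconditional $L^2(D)$-stability of backward Euler, and the very same computation shows that $\bar{\mathcal F}^{\text{ms},\ell}_{\delta}$ and the semi-discrete propagator $\bar{\mathcal F}^{\text{ms},\ell}$ are $L^2(D)$-contractions on $V_{\text{ms},\ell}^{\rm EW}$ as well — a fact I will use for item 2.

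For item 2, affineness again reduces matters to the homogeneous propagators: $\mathcal S(T^n,v_1)-\mathcal S(T^n,v_2)=\bar{\mathcal F}^{\text{ms},\ell}_{\delta}(T^n,\Delta T,w)-\bar E_{\Delta}^{\text{ms},\ell}(T^n,w)$ with $w:=v_1-v_2\in V_{\text{ms},\ell}^{\rm EW}$. I would then insert the exact semi-discrete-in-space homogeneous solution: let $\bar u_{\text{ms},\ell}(\cdot,t):=\bar{\mathcal F}^{\text{ms},\ell}(T^n,t-T^n,w)$ for $t\in[T^n,T^{n+1}]$, so that $\bar u_{\text{ms},\ell}(\cdot,T^n)=w$ and $\partial_t\bar u_{\text{ms},\ell}+\mathcal L\bar u_{\text{ms},\ell}=0$, and estimate
\[
\normL{\mathcal S(T^n,v_1)-\mathcal S(T^n,v_2)}{D}\le\normL{\bar{\mathcal F}^{\text{ms},\ell}_{\delta}(T^n,\Delta T,w)-\bar u_{\text{ms},\ell}(\cdot,T^{n+1})}{D}+\normL{\bar u_{\text{ms},\ell}(\cdot,T^{n+1})-\bar E_{\Delta}^{\text{ms},\ell}(T^n,w)}{D}.
\]
Each summand is a backward Euler time-discretization error for $\bar u_{\text{ms},\ell}$ on $[T^n,T^{n+1}]$. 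For the single coarse step, a Taylor expansion of the consistency residual together with the contraction of $\bar E_{\Delta}^{\text{ms},\ell}$ from item 1 bounds it by $C\Delta T\int_{T^n}^{T^{n+1}}\normL{\partial_{tt}\bar u_{\text{ms},\ell}(\cdot,t)}{D}\,\mathrm{d}t$; for the fine solver, the analogous local truncation errors on the $\Delta T/\delta t$ subintervals are carried to $T^{n+1}$ through the $L^2(D)$-contractive backward Euler propagators, so the accumulated error is $\le C\delta t\int_{T^n}^{T^{n+1}}\normL{\partial_{tt}\bar u_{\text{ms},\ell}(\cdot,t)}{D}\,\mathrm{d}t\le C\Delta T\int_{T^n}^{T^{n+1}}\normL{\partial_{tt}\bar u_{\text{ms},\ell}(\cdot,t)}{D}\,\mathrm{d}t$.

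What is left is to bound $\int_{T^n}^{T^{n+1}}\normL{\partial_{tt}\bar u_{\text{ms},\ell}(\cdot,t)}{D}\,\mathrm{d}t$, and this is exactly where the weight $1/(T^nT^{n+1})$ comes from. Using the smoothing (analyticity) of the parabolic solution operator — the homogeneous counterpart of the regularity estimate \eqref{eq:regularity-parabolic}, with a constant uniform in $H,h,\ell$ since $\mathcal L$ restricted to $V_{\text{ms},\ell}^{\rm EW}$ is symmetric positive definite — one has $\normL{\partial_{tt}\bar u_{\text{ms},\ell}(\cdot,t)}{D}\lesssim t^{-2}\normL{w}{D}$, whence $\int_{T^n}^{T^{n+1}}t^{-2}\,\mathrm{d}t=\tfrac1{T^n}-\tfrac1{T^{n+1}}=\tfrac{\Delta T}{T^nT^{n+1}}$. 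Multiplying by the prefactor $\Delta T$ gives $\normL{\mathcal S(T^n,v_1)-\mathcal S(T^n,v_2)}{D}\lesssim\tfrac{(\Delta T)^2}{T^nT^{n+1}}\normL{v_1-v_2}{D}$, as claimed.

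The hard part will be the regularity step: producing the smoothing bound $\normL{\partial_{tt}\bar u_{\text{ms},\ell}(\cdot,t)}{D}\lesssim t^{-2}\normL{w}{D}$ in a form that survives integration all the way down to $t=T^n$ and with a constant genuinely independent of $H$, $h$ and $2^{-\ell}$; this is the place where the nonsmooth-data theory for the discrete parabolic semigroup (in the spirit of \cite{thomee1984galerkin}) must be invoked carefully, and where one has to check that the many-step error of the fine solver really telescopes into the single integral $\int_{T^n}^{T^{n+1}}\normL{\partial_{tt}\bar u_{\text{ms},\ell}}{D}\,\mathrm{d}t$ rather than picking up a factor equal to the number of fine steps. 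The affine reduction, the $L^2(D)$-contraction, and the telescoping of the local truncation errors should otherwise be routine.
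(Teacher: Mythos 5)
Your proposal follows essentially the same route as the paper's proof: the affine reduction to the homogeneous propagators, the insertion of the exact semi-discrete propagator $\bar{\mathcal F}^{\text{ms},\ell}$ to split the jump into a coarse one-step and a fine multi-step backward Euler error, the unconditional $L^2(D)$-stability of a backward Euler step obtained by testing the error equation with the error itself, and the nonsmooth-data smoothing bound $\normL{\partial_{tt}\,\cdot}{D}\lesssim t^{-2}\normL{\cdot}{D}$ integrated over $[T^n,T^{n+1}]$ to produce the weight $(\Delta T)^2/(T^nT^{n+1})$. The delicate point you flag at the end --- justifying that smoothing estimate for the semi-discrete propagator with constants uniform in $H$, $h$ and $\ell$, and in a form that survives the integration down to $t=T^n$ --- is precisely the step the paper disposes of in one line by invoking an adaptation of \cite[Lemma 3.2]{thomee1984galerkin}, so your concern is well placed but does not distinguish your argument from theirs.
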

	\begin{proof}
		1. Let $e_{{\text{ms}}}^{n+1}:=E_{\Delta }^{{\text{ms},\ell}}(T^n,v_1)-E_{\Delta }^{{\text{ms},\ell}}(T^n,v_2)$, then it holds
		\begin{align*}
		\forall w_{\text{ms}}\in V_{\text{ms},\ell}^{\rm EW}:\int_{D}e_{\text{ms}}^{n+1} w_{\text{ms}}\dx+\Delta T\int_{D}\kappa \nabla e_{\text{ms}}^{n+1}\cdot \nabla w_{\text{ms}}\dx
		=\int_{D}(v_1-v_2) w_{\text{ms}}\dx.
		\end{align*}
		Choosing $w_{\text{ms}}:=e_{{\text{ms}}}^{n+1}$ leads to
		\begin{align*}
		\normL{e_{{\text{ms}}}^{n+1}}{D}^2+\Delta T\normE{e_{{\text{ms}}}^{n+1}}{D}^2=\int_{D}e_{{\text{ms}}}^{n+1}(v_1-v_2) \dx.
		\end{align*}
		Finally an application of the Young's inequality proves the first assertion.
		
		\noindent 2. To prove the second assertion, let
		\begin{align*}
		e_{\text{ms}}^{n+1}&:=\mathcal{S}(T^n,v_1)-\mathcal{S}(T^n,v_2)\\
		&=\Big(\mathcal{F}^{{\text{ms},\ell}}_{\delta }(T^{n},\Delta T,v_1)-\mathcal{F}^{{\text{ms},\ell}}_{\delta }(T^{n},\Delta T,v_2)\Big)-\Big(E^{{\text{ms},\ell}}_{\Delta }(T^n, v_1)-E^{{\text{ms},\ell}}_{\Delta }(T^n, v_2)\Big)\\
		&=\bar{\mathcal{F}}^{{\text{ms},\ell}}_{\delta }(T^{n},\Delta T,v_1-v_2)-\bar{E}^{{\text{ms},\ell}}_{\Delta }(T^n, v_1-v_2)\\
		&=\Big(\bar{\mathcal{F}}^{{\text{ms},\ell}}_{\delta }(T^{n},\Delta T,v_1-v_2)-\bar{\mathcal{F}}^{{\text{ms},\ell}}(T^{n},\Delta T,v_1-v_2)\Big)-\Big(\bar{E}^{{\text{ms},\ell}}_{\Delta }(T^n, v_1-v_2)\\
&-\bar{\mathcal{F}}^{{\text{ms},\ell}}(T^{n},\Delta T,v_1-v_2)\Big)\\
		&=:e_{\text{ms},\delta}^{n+1}-e_{\text{ms},\Delta}^{n+1}.
		\end{align*}
		To estimate $e_{\text{ms}}^{n+1}$, we only need to derive the estimate for $e_{\text{ms},\delta}^{n+1}$ and $e_{\text{ms},\Delta}^{n+1}$, separately.
		
To this end, let  $v_{\text{ms},i}^{n+1}:=v_{\text{ms},i}({\cdot,T^{n+1}}):=\mathcal{F}^{{\text{ms},\ell}}(T^n, \Delta T, v_i)$ for $i=1,2$, we first construct the equation for $e_{\text{ms},\Delta}^{n+1}$ by the definitions of the coarse solver \eqref{eq:coarseSolver} and fine solver \eqref{eq:fineSolver}. There holds
		\begin{align*}
		\forall w_{\text{ms}}\in V_{\text{ms},\ell}^{\rm EW}:\int_{D}e_{\text{ms},\Delta}^{n+1} w_{\text{ms}}\dx+\Delta T\int_{D}\kappa \nabla e_{\text{ms},\Delta}^{n+1}\cdot \nabla w_{\text{ms}}\dx
		=\int_{D}w_0\cdot w_{\text{ms}}\dx.
		\end{align*}
		Here,
		\begin{align*}
		w_{0}&:=\Delta T\Big(-\partial_{t}v_{\text{ms},1}|_{t=T^{n+1}}+\frac{v_{\text{ms},1}^{n+1}-v_1}{\Delta T}+\partial_{t}v_{\text{ms},2}|_{t=T^{n+1}}-\frac{v_{\text{ms},2}^{n+1}-v_1}{\Delta T}\Big)\\
		&=-\int_{T^n}^{T^{n+1}}(s-T^n)\partial_{ss}(v_{\text{ms},1}-v_{\text{ms},2})(\cdot,s)\ds.
		\end{align*}
		Note that
		\[
		\normL{w_{0}}{D}\leq \Delta T\int_{T^n}^{T^{n+1}}\normL{\partial_{ss}(v_{\text{ms},1}-v_{\text{ms},2})(\cdot,s)}{D}\ds.
		\]
		An adaptation of the proof to \cite[Lemma 3.2]{thomee1984galerkin} shows
		\[
		\normL{\partial_{tt}(v_{\text{ms},1}-v_{\text{ms},2})(\cdot,t)}{D}\lesssim t^{-2}\normL{v_1-v_2}{D} \text{ for all }t> 0.
		\]
		Consequently, we derive
		\[
		\normL{w_{0}}{D}\leq \frac{1}{T^{n}T^{n+1}}(\Delta T)^2\normL{v_1-v_2}{D}.
		\]
		Choosing $w_{\text{ms}}:=e_{{\text{ms},\Delta}}^{n+1}$ leads to
		\begin{align*}
		\normL{e_{{\text{ms},\Delta}}^{n+1}}{D}^2+\Delta T\normE{e_{{\text{ms},\Delta}}^{n+1}}{D}^2=\int_{D}e_{{\text{ms},\Delta}}^{n+1}w_{0} \dx.
		\end{align*}
		Consequently, an application of the Young's inequality implies
		\begin{align*}
			\normL{e_{{\text{ms},\Delta}}^{n+1}}{D}\leq \frac{1}{T^{n}T^{n+1}}(\Delta T)^2\normL{v_1-v_2}{D}.
		\end{align*}
		Analogously, we can obtain the estimate for $e_{{\text{ms},\delta}}^{n+1}$, which reads
		\begin{align*}
		\normL{e_{{\text{ms},\delta}}^{n+1}}{D}\leq \frac{1}{T^{n}T^{n+1}}\delta t\Delta T\normL{v_1-v_2}{D}.
		\end{align*}
		Note that $\delta t\ll \Delta T$, then a combination of the two estimates above with the triangle inequality, shows the second assertion.		
	\end{proof}
	\begin{remark}
	Lemma \ref{lemma:parareal} indicates that the approximation property of the jump operator $\mathcal{S}(T^n,\cdot)$ deteriorates when $T^n$ is small.
	\end{remark}
We present in the next theorem the convergence rate of Algorithm \ref{algorithm:wavelet+parareal} to Problems \eqref{eqn:pde} in pointwise-in-time in $L^2(D)$-norm. To derive it, we first decompose the error from Algorithm \ref{algorithm:wavelet+parareal} as a summation of the error from WEMsFEM and the error from parareal algorithm. Then we estimate the former by energy method using argument analogous to \cite[Theorem 1.5]{thomee1984galerkin}, and the latter can be estimated in a similar manner as in \cite[Theorem 1]{gander2008nonlinear}.
\begin{theorem}
		[Error estimate of Algorithm \ref{algorithm:wavelet+parareal} to Problem \eqref{eqn:pde} in $L^2(D)$-norm]\label{prop:wavelet-based}
		Let Assumption \ref{ass:coeff} hold. Assume that the source term $f\in L^2([0,T]; \dot{H}^2(D))$ satisfying $\partial_{t}f\in L^2([0,T]; L^2(D))$
and initial data $u_0\in \dot{H}^3(D)\cap H^1_0(D)$. Let $\ell\in \mathbb{N}_{+}$ be the level parameter. The coarse time step size and fine time step size are $\Delta T$ and $\delta t$. Let $u(\cdot,t)\in V$ be the solution to Problem \eqref{eqn:pde} and let $U_k^n$ be the solution from Algorithm \ref{algorithm:wavelet+parareal} with iteration $k\in\mathbb{N}_{+}$.
 There holds
		\begin{equation}\label{eq:waveletErr}
		\begin{aligned}
		\normL{u(\cdot,T^n)-U_{k}^{n}}{D}
		&\lesssim (H+2^{-\ell/2}\|\kappa\|_{L^{\infty}(\mathcal{F}_H)})H|u_0|_2+\delta t\Big(|u_0|_3+\|f\|_{L^2(I;\dot{H}^2(D))}+\|\partial_{t}f\|_{L^2(I;L^2(D))}\Big)\\
		&+
		\Big(\frac{1}{T^{n-1}}\Big)^{k+1}\frac{1}{k!}
		(\Delta T)^{k+1} \normL{u_0}{D}.
		\end{aligned}
		\end{equation}
	\end{theorem}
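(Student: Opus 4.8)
The plan is to split the total error via the triangle inequality at the intermediate (semi-discrete-in-space) multiscale solution $u_{\text{ms},\ell}^{\text{EW}}$ defined by \eqref{eq:multiscale} on $V_{\text{ms},\ell}^{\rm EW}$, writing
\[
\normL{u(\cdot,T^n)-U_k^n}{D}\le \normL{u(\cdot,T^n)-u_{\text{ms},\ell}^{\text{EW}}(\cdot,T^n)}{D}+\normL{u_{\text{ms},\ell}^{\text{EW}}(\cdot,T^n)-u_{k,\text{ms},\ell}^{\text{EW},n}}{D}+\normL{u_{k,\text{ms},\ell}^{\text{EW},n}-U_k^n}{D},
\]
where the middle term is the error of the time-discrete multiscale solution against its own continuous-in-time version and the last accounts for the parareal iteration. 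The first term is the spatial multiscale error: I would bound it by a standard energy (elliptic-projection / Ritz) argument in the spirit of \cite[Theorem 1.5]{thomee1984galerkin}, using $\mathcal{P}_\ell$ as the projection onto $V_{\text{ms},\ell}^{\rm EW}$ and invoking the approximation estimate \eqref{eq:glo-L2rietz}, together with the parabolic regularity \eqref{eq:regularity-parabolic} and the identity \eqref{eq:reg-u0}. This should produce the $(H+2^{-\ell/2}\|\kappa\|_{L^\infty(\mathcal{F}_H)})H|u_0|_2$ contribution (the $H^2$-type term) plus lower-order terms already absorbed into the $\delta t$ bracket.

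Next I would handle the fine-time-discretization error $\normL{u_{\text{ms},\ell}^{\text{EW}}(\cdot,T^n)-u_{k,\text{ms},\ell}^{\text{EW},n}}{D}$, i.e. backward-Euler-in-time applied within the fixed finite-dimensional space $V_{\text{ms},\ell}^{\rm EW}$. Here the relevant reference solution is $\mathcal{F}^{\text{ms},\ell}$ versus $\mathcal{F}^{\text{ms},\ell}_\delta$, and a classical backward-Euler energy estimate yields an $\mathcal{O}(\delta t)$ bound against $\int_0^T\|\partial_{tt}u_{\text{ms},\ell}\|\,dt$; controlling the latter again via \eqref{eq:regularity-parabolic} gives the term $\delta t\big(|u_0|_3+\|f\|_{L^2(I;\dot H^2)}+\|\partial_t f\|_{L^2(I;L^2)}\big)$. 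The key technical point is that the semi-discrete-in-space problem inherits the same regularity structure, so the Thomée-type bounds transfer verbatim.

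The remaining term is the pure parareal error $\normL{u_{k,\text{ms},\ell}^{\text{EW},n}-U_k^n}{D}$, which I would estimate by the induction-on-$k$ argument of \cite[Theorem 1]{gander2008nonlinear}: denoting $E_k^n$ for this error, the parareal update (Steps 6--9 of Algorithm \ref{algorithm:wavelet+parareal}) gives a recursion $E_{k+1}^{n+1}\le \|E_{\Delta}^{\text{ms},\ell}\|_{\text{Lip}}\,E_{k+1}^n + \|\mathcal{S}(T^n,\cdot)\|_{\text{Lip}}\,E_k^n$, into which I plug the two estimates of Lemma \ref{lemma:parareal}: the coarse solver is a contraction (constant $\le 1$) and the jump operator has Lipschitz constant $\tfrac{(\Delta T)^2}{T^nT^{n+1}}$. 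Unrolling the recursion and bounding $\prod_j \tfrac{1}{T^jT^{j+1}}$ against $\big(\tfrac{1}{T^{n-1}}\big)^{k+1}$ with a $\tfrac{1}{k!}$ combinatorial factor from summing over the iteration indices, and using $E_0^n\lesssim \normL{u_0}{D}$ from the coarse-solver bound, produces the final term $\big(\tfrac{1}{T^{n-1}}\big)^{k+1}\tfrac{1}{k!}(\Delta T)^{k+1}\normL{u_0}{D}$. I expect the main obstacle to be precisely the bookkeeping in this last step: correctly propagating the $T^n$-dependent (and hence $n$-nonuniform, blowing up as $t\to 0$) Lipschitz constants through the double recursion in $n$ and $k$ so that the powers of $\Delta T$, the factorial, and the singular prefactor come out exactly as stated — this is where the parabolic singularity at $t=0$ enters, and where one must be careful that the coarse solver being a genuine contraction (not merely Lipschitz with constant near $1$) is what prevents an extra geometric-in-$n$ blow-up.
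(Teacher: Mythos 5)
Your proposal follows essentially the same route as the paper: the paper likewise introduces the fully discrete multiscale solution $u_{\text{ms},\ell}^{\text{EW},m}$ (backward Euler at the fine step in $V_{\text{ms},\ell}^{\rm EW}$) as the intermediate quantity, bounds $u(\cdot,T^n)-u_{\text{ms},\ell}^{\text{EW},m}$ by a single Thom\'ee-type energy argument invoking \eqref{eq:glo-L2rietz}, \eqref{eq:reg-u0} and \eqref{eq:regularity-parabolic}, and then treats the parareal error exactly as you describe, via Lemma \ref{lemma:parareal} and the Gander--Hairer recursion in $n$ and $k$ with the singular prefactor $(1/T^{n-1})^{k+1}$ and the $1/k!$ factor. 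The only cosmetic difference is your additional triangle inequality separating the semi-discrete-in-space error from the fine-time-discretization error, which the paper folds into one energy estimate; the ingredients and the outcome are identical.
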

	\begin{proof}
		We first define the multiscale solution to Problem \eqref{eqn:pde} using Algorithm \ref{algorithm:wavelet}. Find $u_{\text{ms},\ell}^{\text{EW},m}\in V_{\text{ms},\ell}^{\rm EW}$ for $m=1,\cdots,M_{\delta}$, satisfying
		\begin{align}\label{eq:multiscaleSoln}
		\forall w_{\text{ms}}\in V_{\text{ms},\ell}^{\rm EW}:(\frac{u_{\text{ms},\ell}^{\text{EW},m}-u_{\text{ms},\ell}^{\text{EW},m-1}}{\delta t},w_{\text{ms}})_D+a(u_{\text{ms},\ell}^{\text{EW},m},w_{\text{ms}})&=(f(\cdot,t_m),w_{\text{ms}})_{D} \\
		u_{\text{ms},\ell}^{\text{EW},0}&=\mathcal{P}_{\ell}(u_0).\nonumber
		\end{align}
		Then we only need to estimate $\normL{u(\cdot,T^n)-u_{\text{ms},\ell}^{\text{EW},m}}{D}$ and $\normL{u_{\text{ms},\ell}^{\text{EW},m}-U_{k}^{n}}{D}$ for $m:=\Delta T/{\delta t} \times n$. Note that $T^n=t_m$. Therefore, we can replace $T^n$ with $t_m$. Similarly, let $m\rq{}:=\Delta T/{\delta t} \times (n-1)$, then it holds $t_{m\rq{}}=T^{n-1}$.
		
		\noindent Step 1. To estimate the first term $\normL{u(\cdot,t_m)-u_{\text{ms},\ell}^{\text{EW},m}}{D}$,
		let $e_{\text{ms}}^{m}:=u(\cdot,t_m)-u_{\text{ms},\ell}^{\text{EW},m}$, then for all $j\geq 1$ it holds
		\begin{align*}
		\forall w_{\text{ms}}\in V_{\text{ms},\ell}^{\rm EW}:\int_{D}(e_{\text{ms}}^{j}-e_{\text{ms}}^{j-1}) w_{\text{ms}}\dx+\delta t\int_{D}\kappa \nabla e_{\text{ms}}^{j}\cdot \nabla w_{\text{ms}}\dx
		&=\int_{D}w^{j}\cdot w_{\text{ms}}\dx\\
		e_{\text{ms}}^{0}&=u_0-\mathcal{P}_{\ell}(u_0)
		\end{align*}
		with
		\[
		w^{j}=u(\cdot,t_{j})-u(\cdot,t_{j-1})-\delta t\times\partial_{t} u(\cdot,t_{j}).
		\]
		Subsequently, similar proof to \cite[Theorem 1.5]{thomee1984galerkin} leads to
		\begin{align}
		\normL{e_{{\text{ms}}}^{m}}{D}&+\delta t\normE{e_{{\text{ms}}}^{m}}{D}
\lesssim\normL{e_{\text{ms}}^{0}}{D}+\delta t\int_{0}^{T}\normL{\partial_{ss}u}{D}\ds\nonumber\\
&\lesssim(H+2^{-\ell/2}\|\kappa\|_{L^{\infty}(\mathcal{F}_H)})H|u_0|_2+\delta t\int_{0}^{T}\normL{\partial_{ss}u}{D}\ds\nonumber\\
&\lesssim (H+2^{-\ell/2}\|\kappa\|_{L^{\infty}(\mathcal{F}_H)})H|u_0|_2+\delta t\Big(|u_0|_3+\|f\|_{L^2(I;\dot{H}^2(D))}+\|\partial_{t}f\|_{L^2(I;L^2(D))}\Big).
\label{eq:impt1}
		\end{align}
Here, we have applied estimates \eqref{eq:glo-L2rietz}, \eqref{eq:reg-u0} and \eqref{eq:regularity-parabolic} in the last inequality.

		Step 2. We estimate the error induced by parareal algorithm in the multiscale method, i.e., the second term $\normL{u_{\text{ms},\ell}^{\text{EW},m}-U_{k}^{n}}{D}$. We will prove:
		\begin{align}\label{eq:parareal-est}
		\normL{u_{\text{ms},\ell}^{\text{EW},m}-U_{k}^{n}}{D}\lesssim \Big(\frac{1}{T^{n-1}}\Big)^{k+1}\frac{1}{k!}
		(\Delta T)^{k+1} \normL{u_0}{D}.
		\end{align}
 We can obtain from Algorithm \ref{algorithm:wavelet+parareal}:
		\begin{align*}	u_{\text{ms},\ell}^{\text{EW},m}-U_{k+1}^{n}=\mathcal{S}(T^{n-1},u_{\text{ms},\ell}^{\text{EW},m\rq{}})-\mathcal{S}(T^{n-1},U_{k}^{n-1})
		+E_{\Delta }^{{\text{ms},\ell}}(T^{n-1},u_{\text{ms},\ell}^{\text{EW},m\rq{}})-E_{\Delta }^{{\text{ms},\ell}}(T^{n-1},U_{k+1}^{n-1}).
		\end{align*}
		Consequently, an application of Lemma \ref{lemma:parareal} leads to
		\begin{align*}
		&\normL{u_{\text{ms},\ell}^{\text{EW},m}-U_{k+1}^{n}}{D}\\
		&\leq\normL{\mathcal{S}(T^{n-1},u_{\text{ms},\ell}^{\text{EW},m\rq{}})-\mathcal{S}(T^{n-1},U_{k}^{n-1})}{D}
		+\normL{E_{\Delta }^{{\text{ms},\ell}}(T^{n-1},u_{\text{ms},\ell}^{\text{EW},m\rq{}})-E_{\Delta }^{{\text{ms},\ell}}(T^{n-1},U_{k+1}^{n-1})}{D}\\
		&\lesssim \Delta T \int_{T^{n-1}}^{T^n}\normL{
		\partial_{ss}\bar{\mathcal{F}}^{{\text{ms},\ell}}(T^{n-1}, \Delta T, u_{\text{ms},\ell}^{\text{EW},m\rq{}}-U_{k}^{n-1})
		}{D}\mathrm{d}s
		+\normL{u_{\text{ms},\ell}^{\text{EW},m\rq{}}-U_{k+1}^{n-1}}{D}\\
&\lesssim \frac{1}{T^{n-1}T^n}(\Delta T)^2\normL{u_{\text{ms},\ell}^{\text{EW},m\rq{}}-U_{k}^{n-1}}{D}
		+\normL{u_{\text{ms},\ell}^{\text{EW},m\rq{}}-U_{k+1}^{n-1}}{D}.
		\end{align*}
		Whereas similar trick as in the proof of \cite[Theorem 1]{gander2008nonlinear} can be utilized here to prove estimate \eqref{eq:parareal-est}. Then a combination with \eqref{eq:impt1} results in the desired estimate.
	\end{proof}
Note that Theorem \ref{prop:wavelet-based} indicates that the Error estimate for Algorithm \ref{algorithm:wavelet+parareal} to Problems \eqref{eqn:pde} in $L^2(D)$-norm will deteriorate when the time step approaches the original $t=0$. This blow-up of error is produced by the parareal algorithm (Step 2 in the proof to Theorem \ref{prop:wavelet-based}), which essentially arises from the approximation property of the jump operator \eqref{eq:app-jump}. This estimate can be improved to
\begin{align}\label{eq:app-jump-imp}
			\quad&\normL{\mathcal{S}(T^n,v_1)-\mathcal{S}(T^n,v_2)}{D}\leq (\Delta T)^2|v_1-v_2|_{4},
			\end{align}
when $v_1,v_2\in \dot{H}_4(D)$.

However, the estimate above has different norms on both sides of the inequality. This makes the argument in Step 2, proof to Theorem \ref{prop:wavelet-based} invalid.

When iteration $k=0$, an application to Step 1, proof to Theorem \ref{prop:wavelet-based} results in
\begin{equation*}
		\normL{u(\cdot,T^n)-U_{0}^{n}}{D}
		\lesssim (H+2^{-\ell/2}\|\kappa\|_{L^{\infty}(\mathcal{F}_H)})H|u_0|_2+\Delta t\Big(|u_0|_3+\|f\|_{L^2(I;\dot{H}^2(D))}+\|\partial_{t}f\|_{L^2(I;L^2(D))}\Big).		
		\end{equation*}		
\begin{remark}
Algorithm \ref{algorithm:wavelet+parareal} outweighs Algorithm \ref{algorithm:wavelet} only when the former achieves similar accuracy to the latter within a very few iteration $k\ll M_{\Delta }$. Therefore, we are not interested in the case when $k\geq M_{\Delta }$ or the error at time level $T^n$ with $k\geq n$.
\end{remark}
	\section{Numerical Results}\label{sec:num}
	In this section, we perform a series of numerical experiments to demonstrate the performance of the proposed WEMP Algorithm, i.e. Algorithm \ref{algorithm:wavelet+parareal}.
	
	We consider the parabolic equation (\ref{eqn:pde}) in the space domain $D:=[0,1]^2$ and the time domain $[0,T]=[0,1]$.  The permeability coefficient $\kappa$ is depicted in Figure \ref{fig:permeability&InitialSol} (left figure), which is high-contrast and heterogenous. The smooth initial data tested in our numerical experiments is chosen to be $u_0:=x(1-x)y(1-y)$. We refer to Figure \ref{fig:permeability&InitialSol} (right figure) for an illustration.
	
	\begin{figure}[H]
		\centering
		\includegraphics[trim={1.6cm 0.4cm 1.6cm 0.4cm},clip,width=0.4 \textwidth]{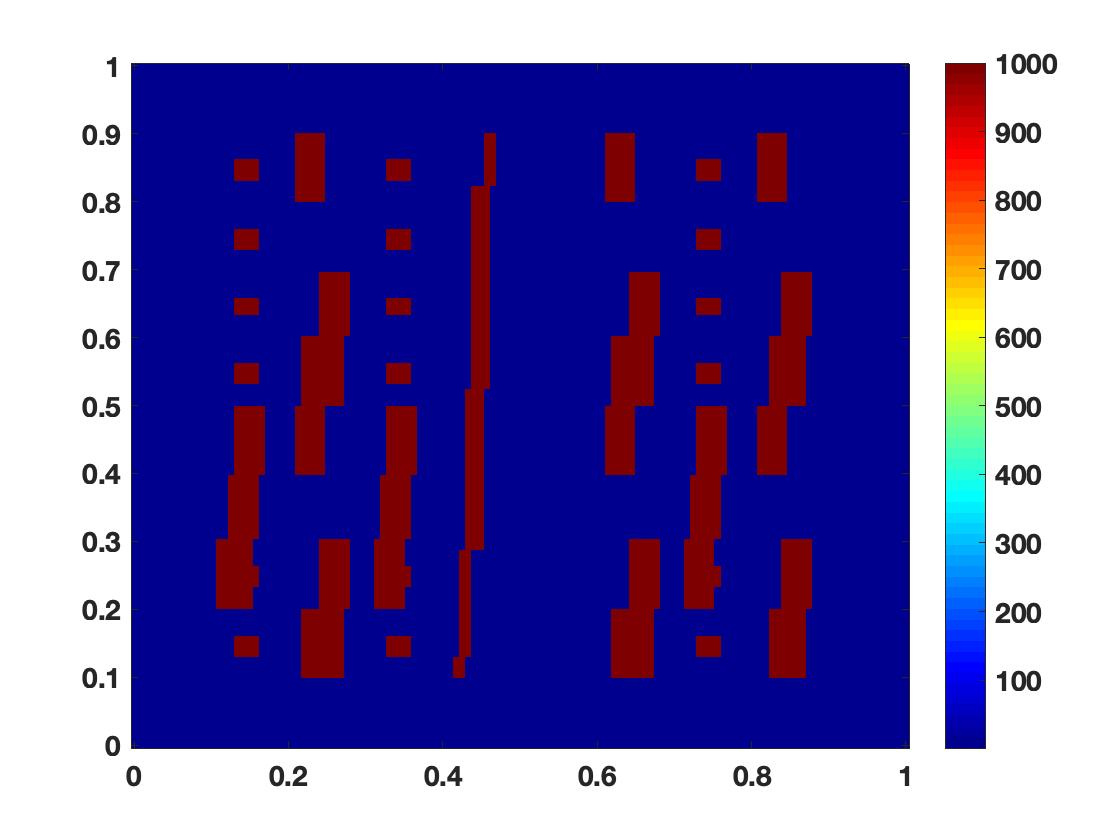}
		\includegraphics[trim={1.6cm 0.4cm 1.6cm 0.4cm},clip,width=0.4 \textwidth]{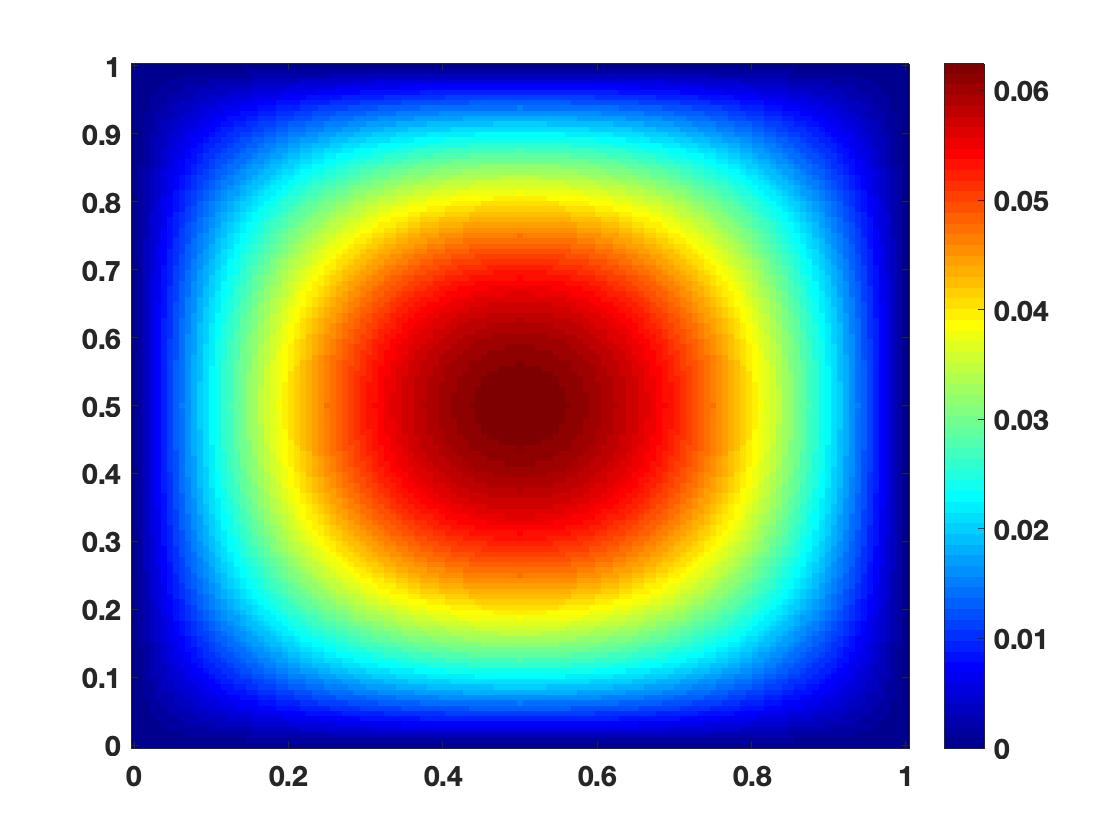}
		\caption{The heterogeneous permeability field $\kappa$ and the initial data $u_0=x(1-x)y(1-y)$.}
		\label{fig:permeability&InitialSol}
	\end{figure}
Let $\mathcal{T}_H$ be a decomposition of the domain $D$ into non-overlapping shape-regular rectangular elements with maximal mesh size $H:=2^{-4}$. These coarse rectangular elements are further partitioned into a collection of connected fine rectangular elements $\mathcal{T}_h$ using fine mesh size $h:=2^{-7}$. Similarly, we define $V_h$ to be a conforming piecewise affine finite element associated with $\mathcal{T}_h$. In our numerical experiments, space meshes $\mathcal{T}_H$ and $\mathcal{T}_h$ are fixed. To keep our presentation concise, we will only present the numerical results with a fixed level parameter $\ell:=2$.
 The temporal discretization is presented in Section \ref{section:problem setting} with $T:=1$. The coarse time step size and fine time step size are $\Delta T$ and $\delta t$. Note that $\delta t\ll \Delta T$.
		
We introduce the following notations to calculate the errors. The relative errors for the multiscale solution in $L^2(D)$-norm and $H^1_{\kappa}(D)$-norm are
 \[
 \text{Rel}^{\text{EW}}_{L^2} (T^n):=\frac{ \norm{u^n_h-u_{\text{ms},\ell}^{\text{EW,n}}}_{L^2(D)}}{ \norm{u^n_h} _{L^2(D) }}\times 100 \quad\text{ and }\quad
  \text{Rel}^{\text{EW}}_{H_{\kappa}^1} (T^n):=\frac{ \norm{u^n_h-u_{\text{ms},\ell}^{\text{EW,n}}}_{H_{\kappa}^1(D)}}{ \norm{u^n_h} _{H_{\kappa}^1(D) }}\times 100.
  \]
 Analogously, the relative errors for our proposed algorithm with iteration $k\in \mathbb{N}$  in $L^2(D)$-norm and $H^1_{\kappa}(D)$-norm are
  \[\text{Rel}^k_{L^2}(T^n):=\frac{ \norm{u^m_h-U_k^n}_{L^2(D)}}{ \norm{u^m_h} _{L^2(D) }}\times 100
  \quad\text{ and }\quad
  \text{Rel}^k_{H_{\kappa}^1}(T^n):=\frac{ \norm{u^m_h-U_k^n}_{H_{\kappa}^1(D)}}{ \norm{u^m_h} _{H_{\kappa}^1(D) }}
  \times 100\]
  with $m:=\Delta T/{\delta t} \times n$.
	
	 Our numerical experiements include testing nonzero source term in section  \ref{subsection:Numerical tests with nonzero source term} and zero source term in section \ref{subsection: Numerical tests with zero source term}. We test what differences backward Euler Galerkin Method and Crank-Nicolson Galerkin Method will make to the error. We also study how fine solver and coarse solver will influence the error and iteration number.
	
	\subsection{Numerical tests with nonzero source term} \label{subsection:Numerical tests with nonzero source term}
	To define nonzero source term, we take time-dependent smooth function
	\[
	 f(x,y,t ):=200\pi ^2 \sin (\pi x)\sin(\pi y)\sin (10\pi tx).
	\]
 Since there is no analytic solution to system (\ref{eqn:pde}), we need to find an approximation of the exact solutions. To this end,  we take time step size $\delta t=10^{-4}$ and use backward Euler Galerkin Method in \eqref{eqn:weakform_h} to obtain the reference solutions $u^n_h$. Note that we use a much finer time step size to simulate the
reference solution. We plot the reference solutions $u^n_h$ for $n=10^3, 3\times 10^3, 5\times 10^3$ and $10^4$ in Figure \ref{fig:fine_solution}.
	  \begin{figure}[H]
		\centering
		\includegraphics[trim={3cm 1.8cm 2.8cm 2cm},clip,width=0.24 \textwidth]{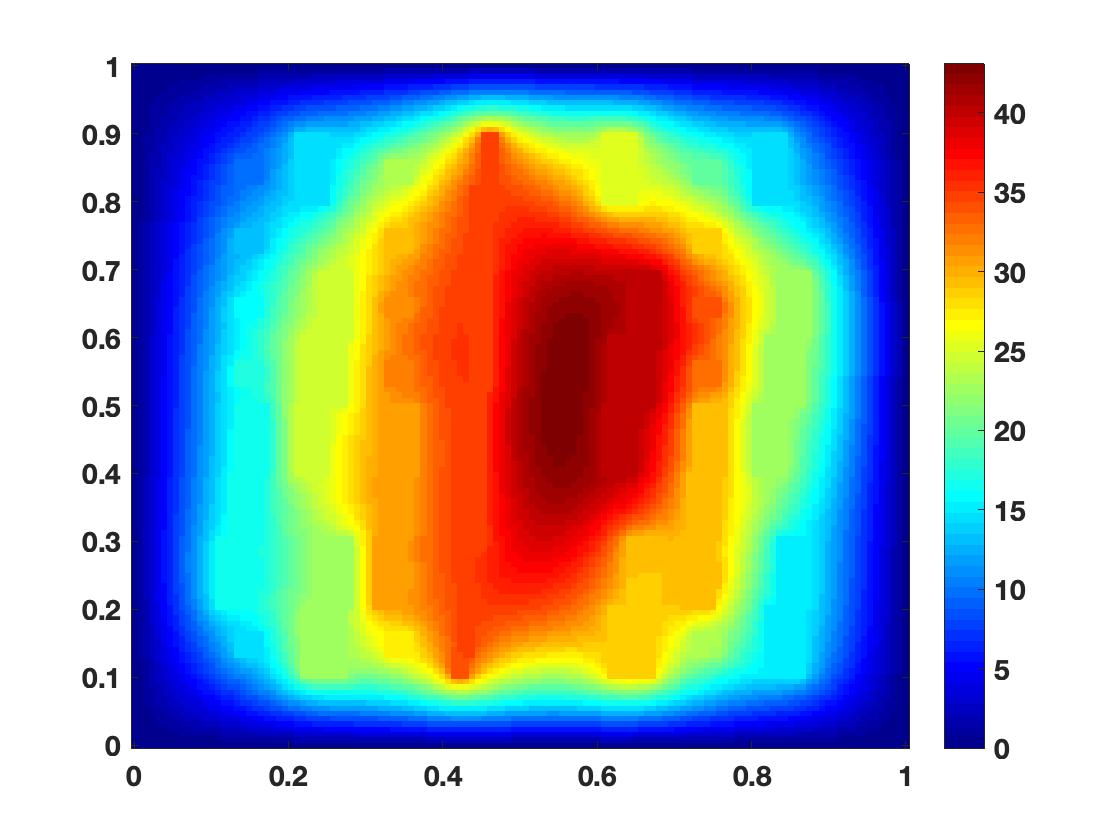}
		\includegraphics[trim={3cm 1.8cm 2.8cm 2cm},clip,width=0.24 \textwidth]{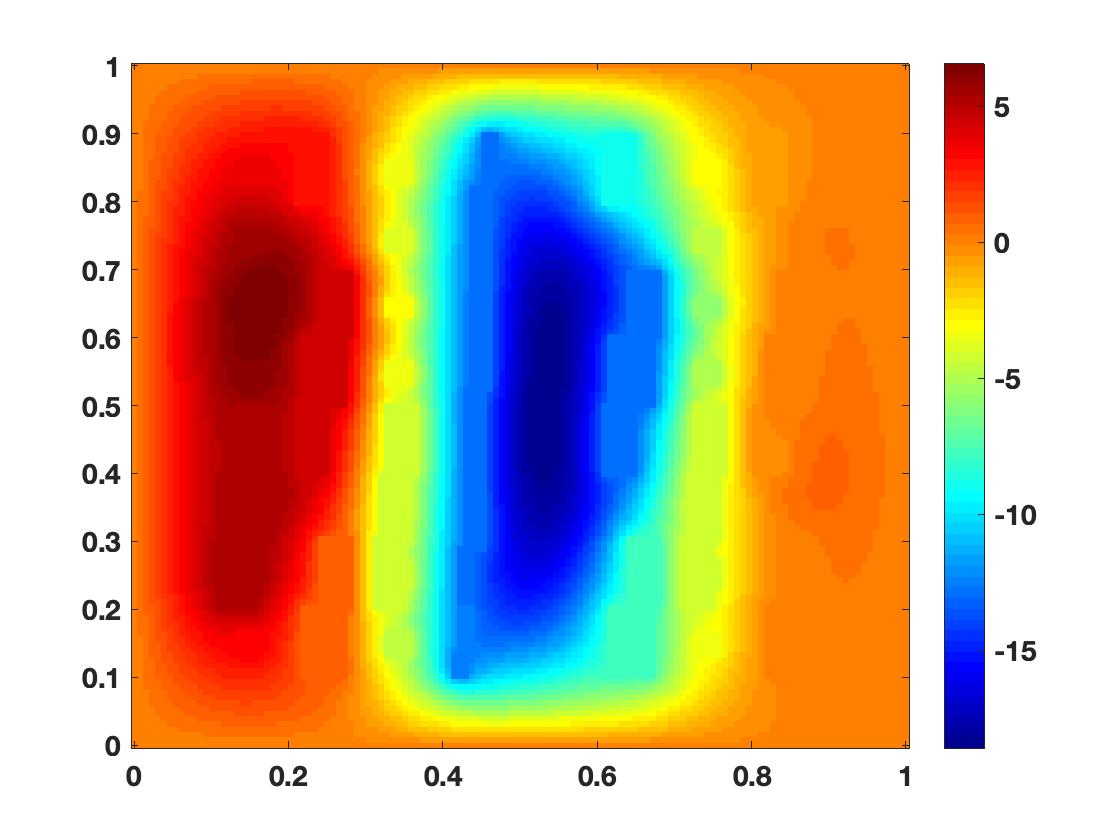}
		\includegraphics[trim={3cm 1.8cm 2.8cm 2cm},clip,width=0.24 \textwidth]{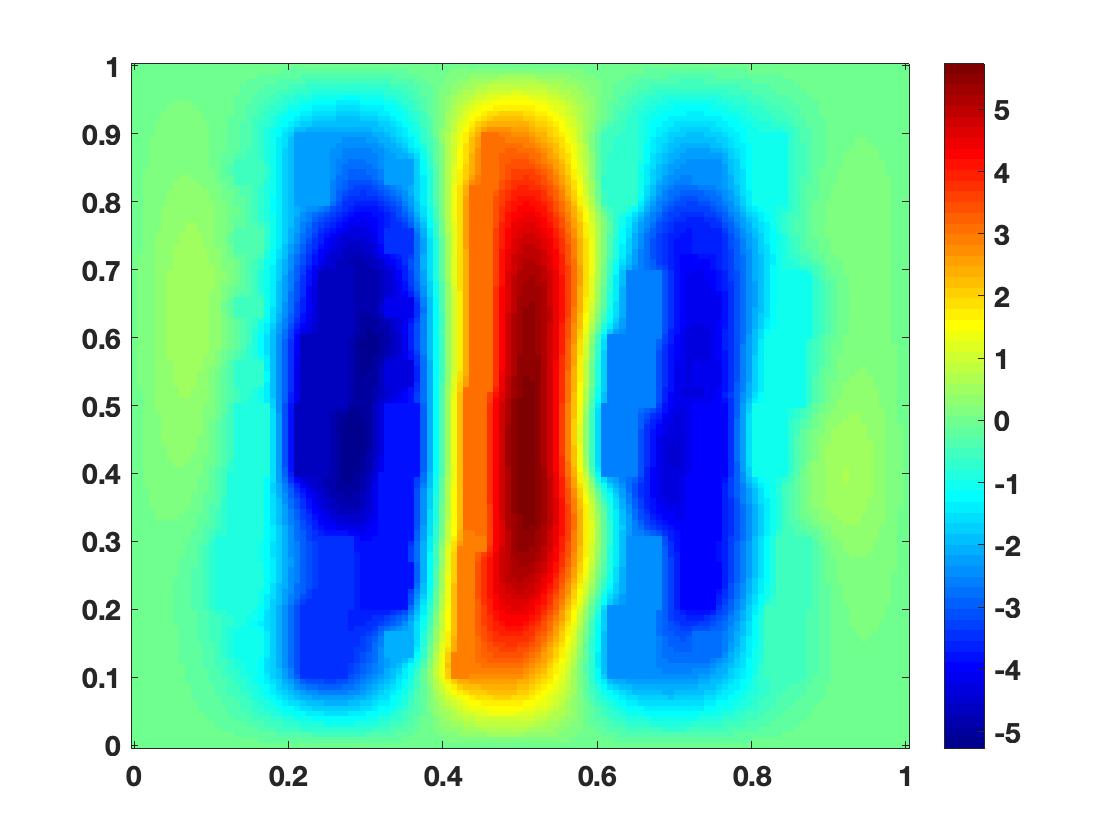}
		\includegraphics[trim={3cm 1.8cm 2.8cm 2cm},clip,width=0.24 \textwidth]{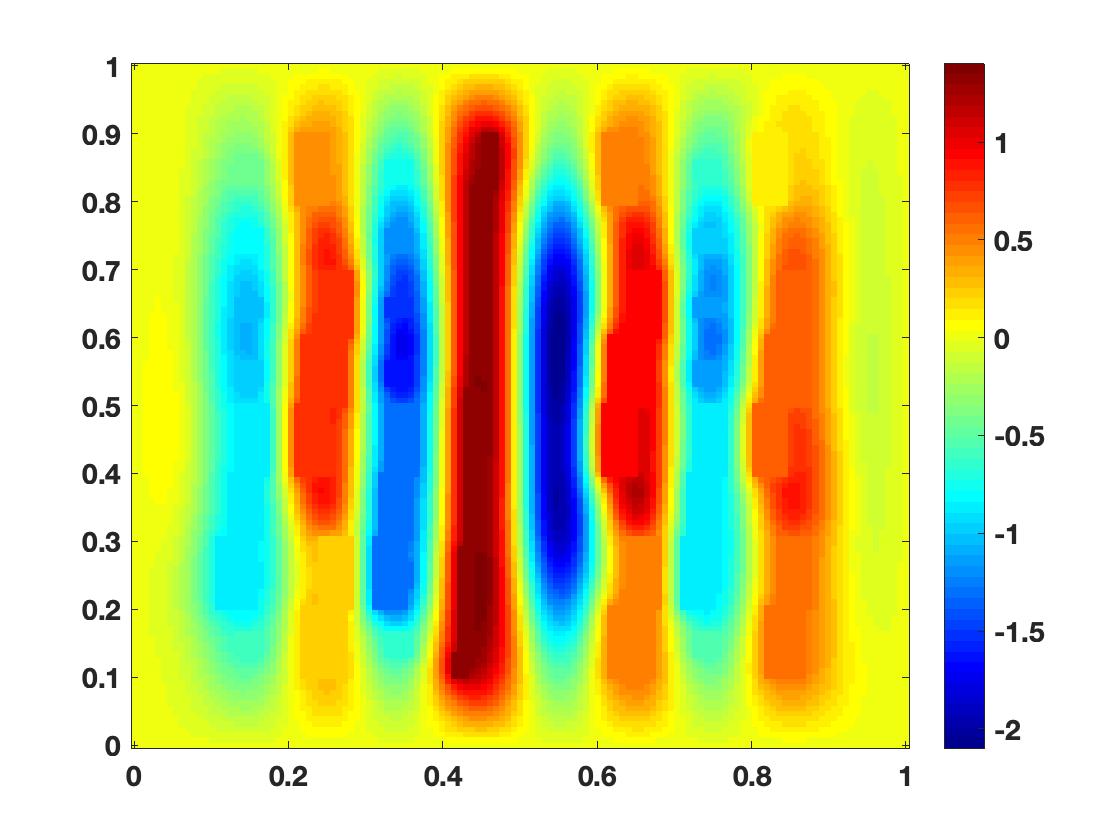}
		\caption{ Numerical solution $u_h^n$ to \eqref{eqn:weakform_h} for $n=10^3, 3\times 10^3, 5\times 10^3$ and $10^4$ with $\delta t=10^{-4}$.}
		\label{fig:fine_solution}
	\end{figure}
In the rest of this subsection, we will present numerical tests using backward Euler scheme with $\frac{\Delta T}{\delta t}=100$ in Experiment 1, Crank-Nicolson scheme with $\frac{\Delta T}{\delta t}=100$ in Experiment 2 and backward Euler scheme with $\frac{\Delta T}{\delta t}=10$ in Experiment 3. For all the three experiments, our proposed algorithm, i.e. Algorithm \ref{algorithm:wavelet+parareal}, can generate numerical solutions by a few iterations at least of the same accuracy as the multiscale solutions from Algorithm \ref{algorithm:wavelet}.
	
\subsubsection*{Experiment 1: Backward Euler with $\frac{\Delta T}{\delta t}=100$}
We test in this experiment the performance of Algorithm \ref{algorithm:wavelet+parareal} with a fine time step $\delta t=10^{-3}$ and a coarse time step $\Delta T=0.1$. The backward Euler scheme is utilized for the time discretization.

The multiscale solutions $u_{\text{ms},\ell}^{\text{EW,m}}$ for $m=100,300,500,1000$ from Algorithm \ref{algorithm:wavelet} are presented in Figure \ref{fig:EWsol_NonzeroSource_level2_BE_coarser}. In comparison, we present the numerical solutions $ U_k^n$ for $n=1, 3, 5, 10$ from Algorithm \ref{algorithm:wavelet+parareal} with iteration number $k=0,1$ and $2$ in Figure \ref{fig:PararealSol_NonzeroSource_level2_BE_coarser}. One can observe that $ U_k^n$ converges to the multiscale solution $u_{\text{ms},\ell}^{\text{EW,n}}$ as the iteration $k$ increases.
\begin{figure}[H]
		\centering
		\includegraphics[trim={3cm 1.8cm 2.8cm 2cm},clip,width=0.24 \textwidth]{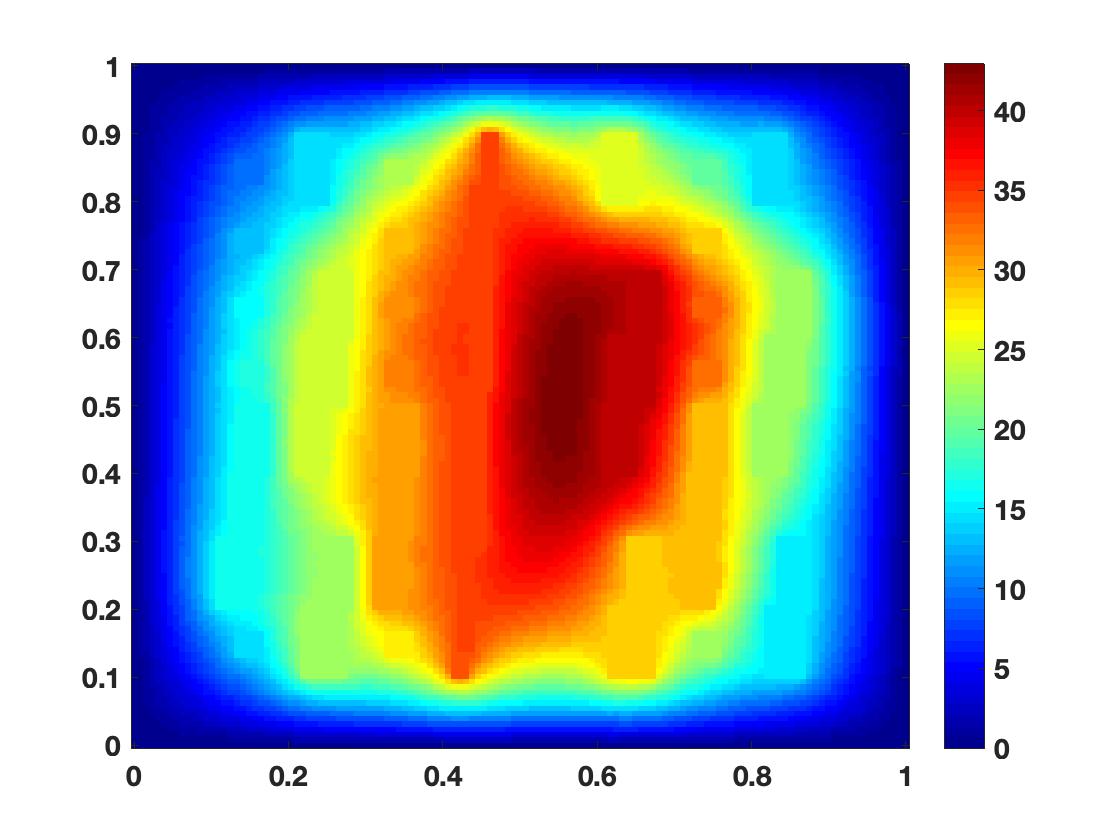}
		\includegraphics[trim={3cm 1.8cm 2.8cm 2cm},clip,width=0.24 \textwidth]{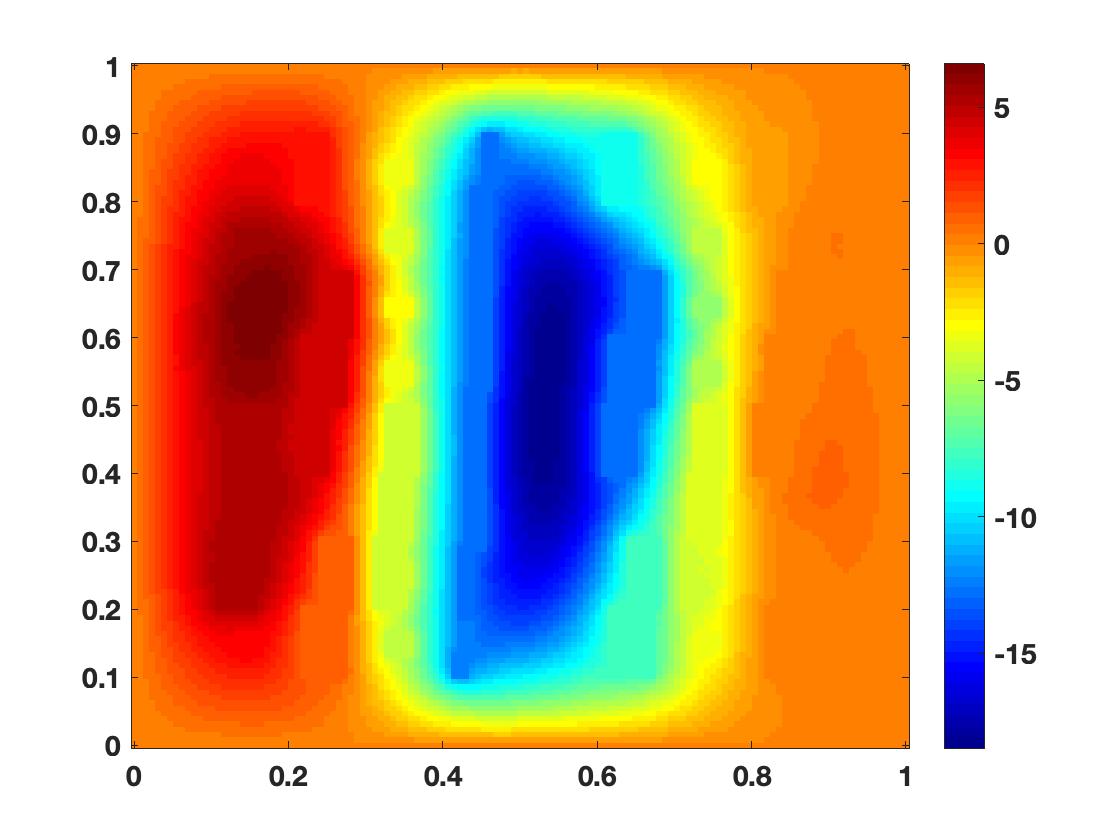}
		\includegraphics[trim={3cm 1.8cm 2.8cm 2cm},clip,width=0.24 \textwidth]{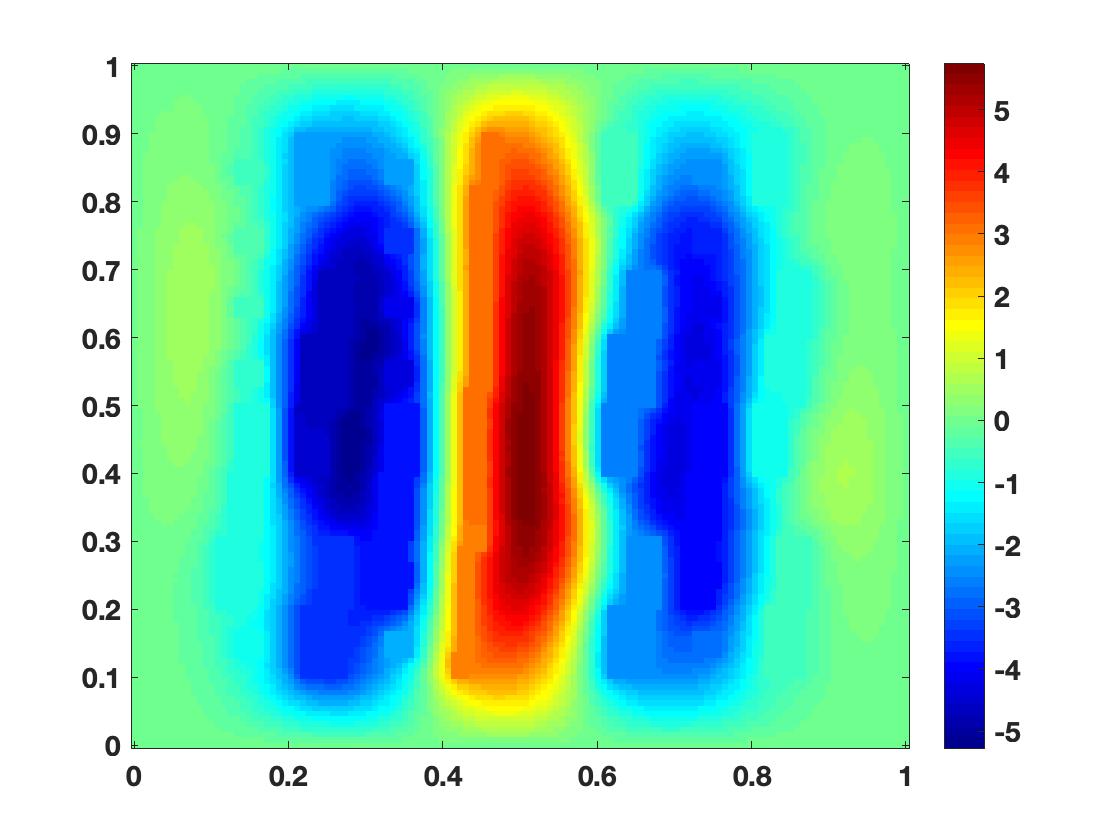}
		\includegraphics[trim={3cm 1.8cm 2.8cm 2cm},clip,width=0.24 \textwidth]{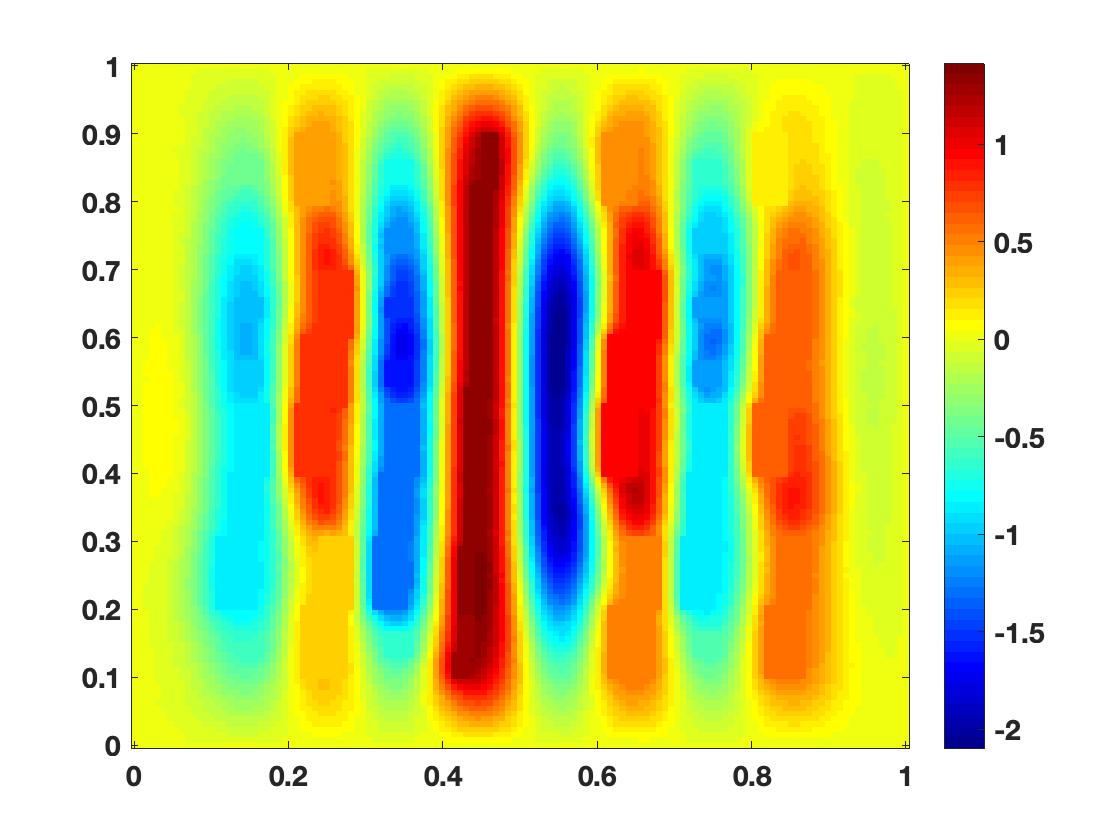}
\caption{Multiscale solution from Algorithm \ref{algorithm:wavelet} with $\delta t=10^{-3}$ and $\ell=2$, backward Euler scheme: $u_{\text{ms},\ell}^{\text{EW,100}}$, $u_{\text{ms},\ell}^{\text{EW,300}}$, $u_{\text{ms},\ell}^{\text{EW,500}}$ and $u_{\text{ms},\ell}^{\text{EW,1000}}$.}\label{fig:EWsol_NonzeroSource_level2_BE_coarser}
	\end{figure}	
\begin{figure}[H]
		\centering
		\includegraphics[trim={3cm 1.8cm 2.8cm 2cm},clip,width=0.24 \textwidth]{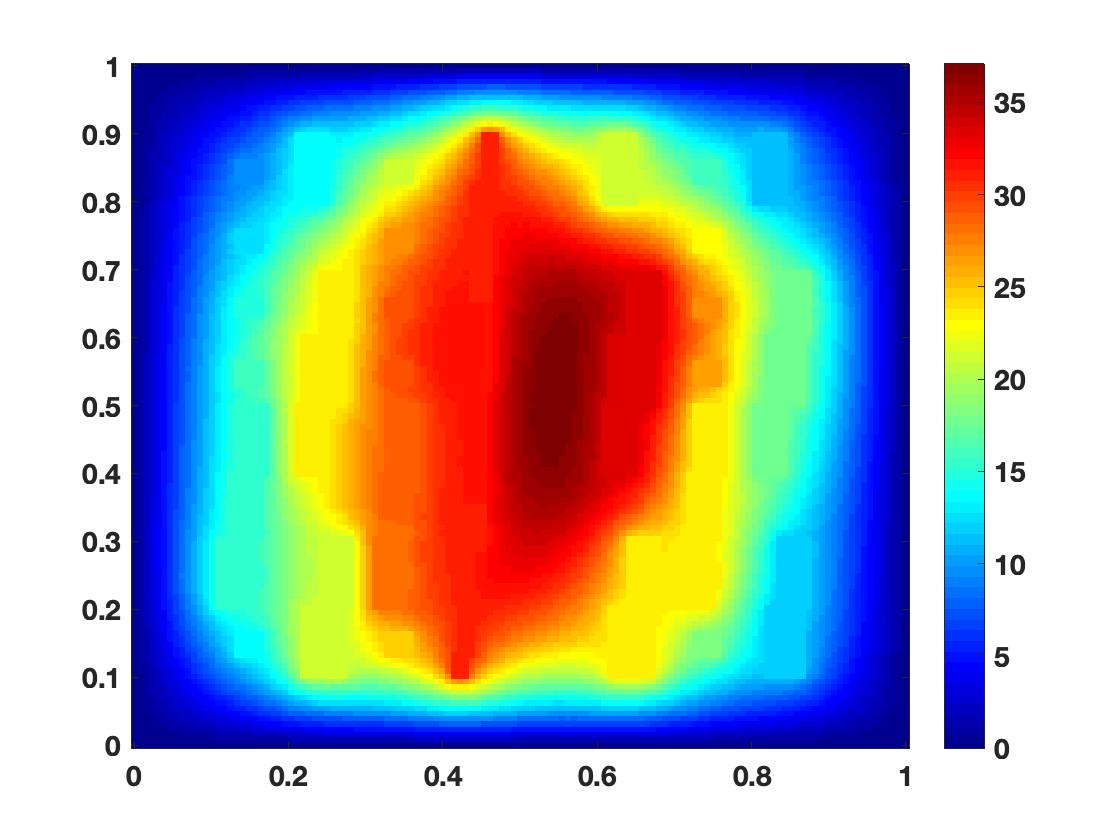}
		\includegraphics[trim={3cm 1.8cm 2.8cm 2cm},clip,width=0.24 \textwidth]{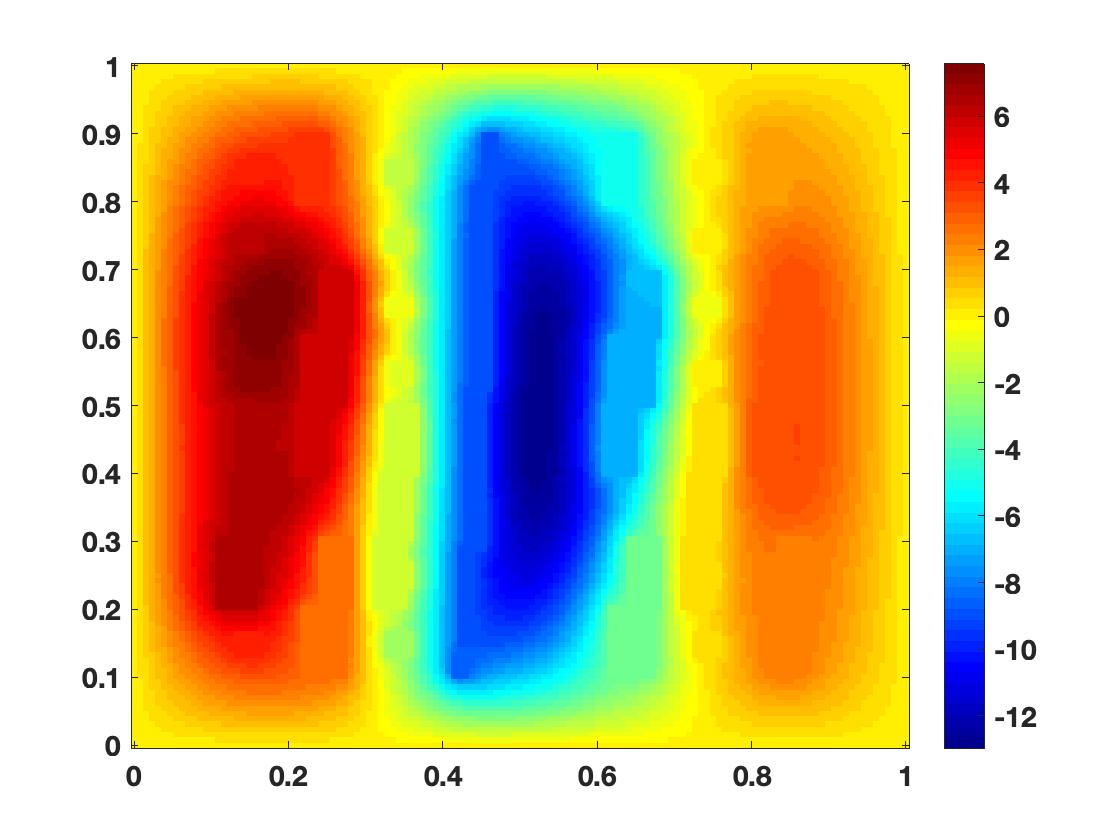}
		\includegraphics[trim={3cm 1.8cm 2.8cm 2cm},clip,width=0.24 \textwidth]{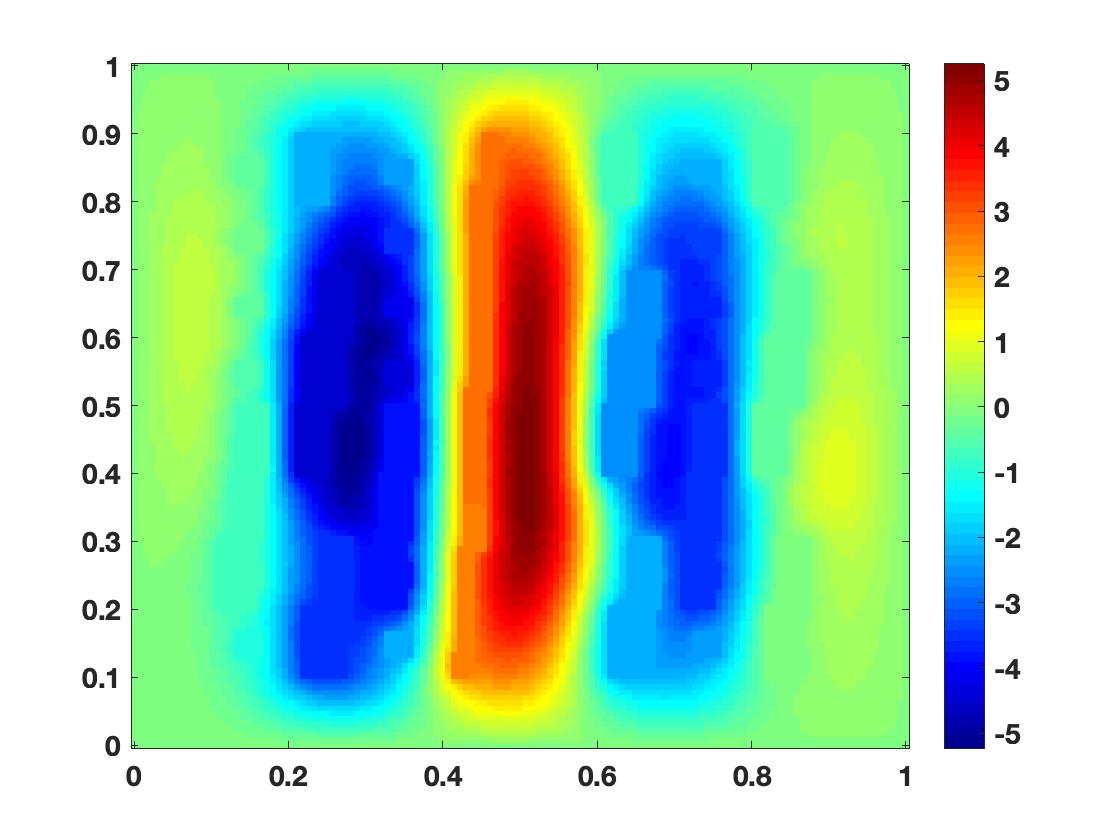}
		\includegraphics[trim={3cm 1.8cm 2.8cm 2cm},clip,width=0.24 \textwidth]{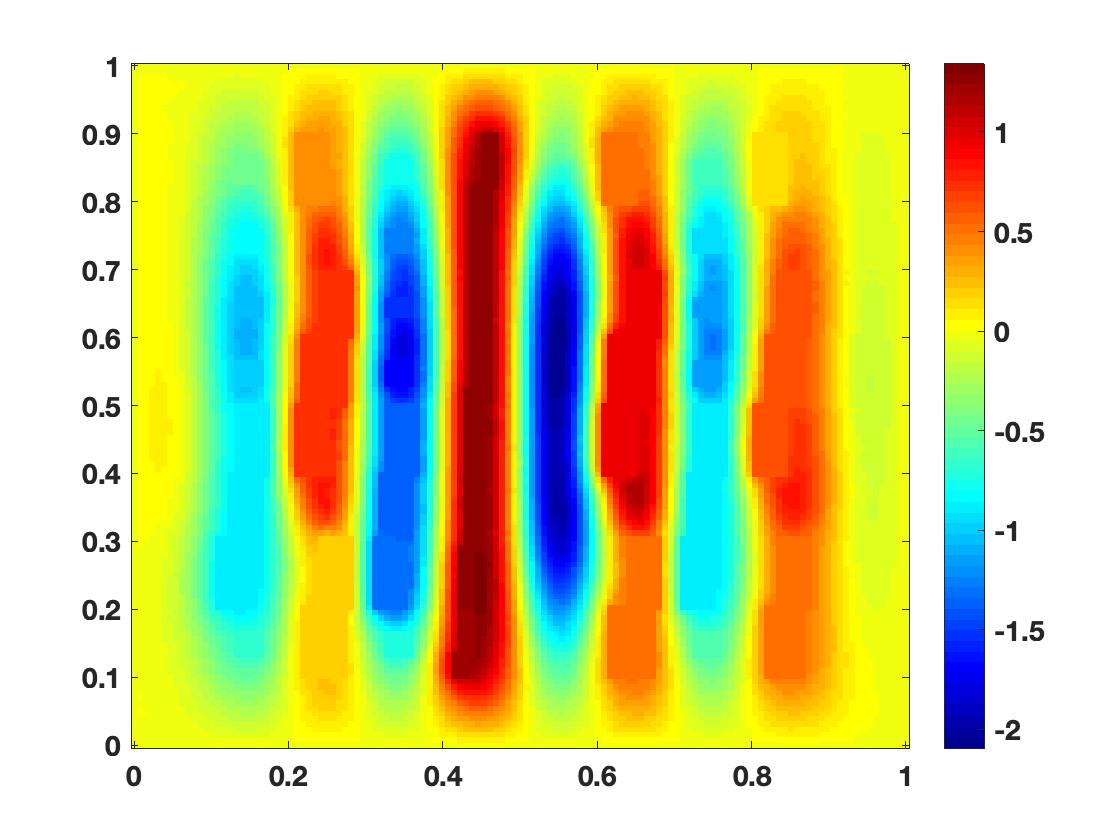}\\
		\includegraphics[trim={3cm 1.8cm 2.8cm 2cm},clip,width=0.24 \textwidth]{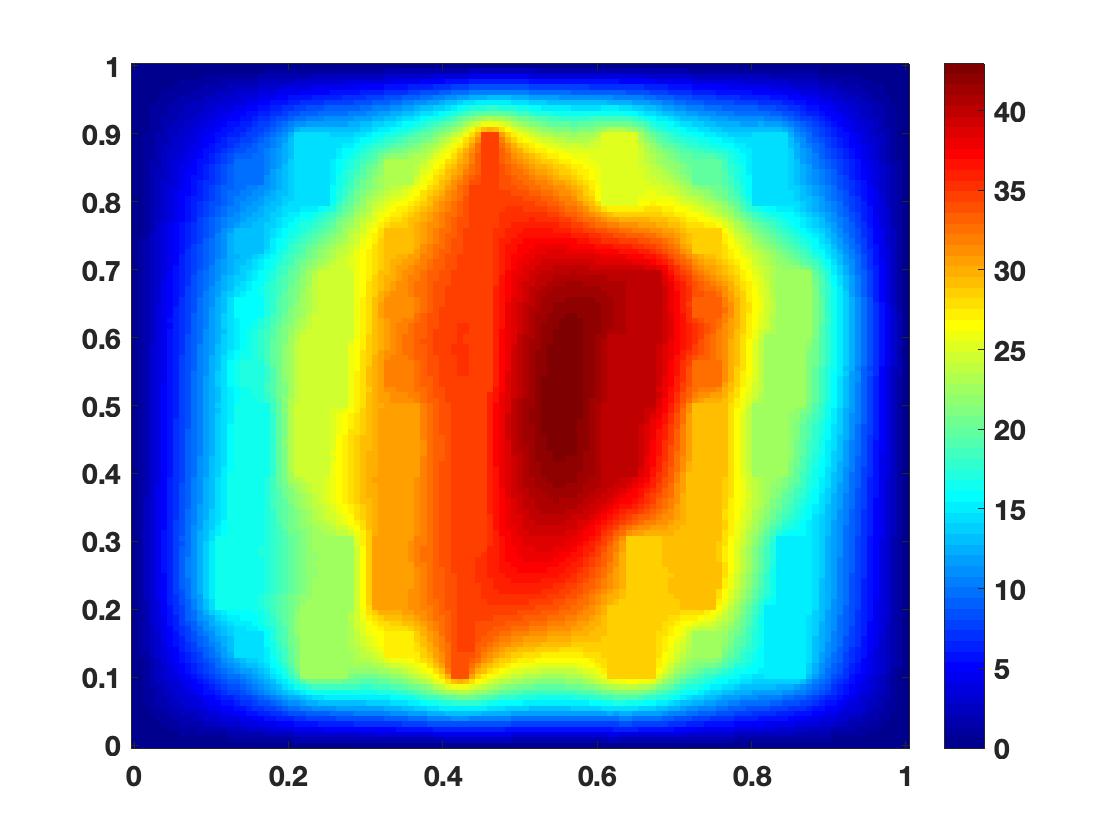}
		\includegraphics[trim={3cm 1.8cm 2.8cm 2cm},clip,width=0.24 \textwidth]{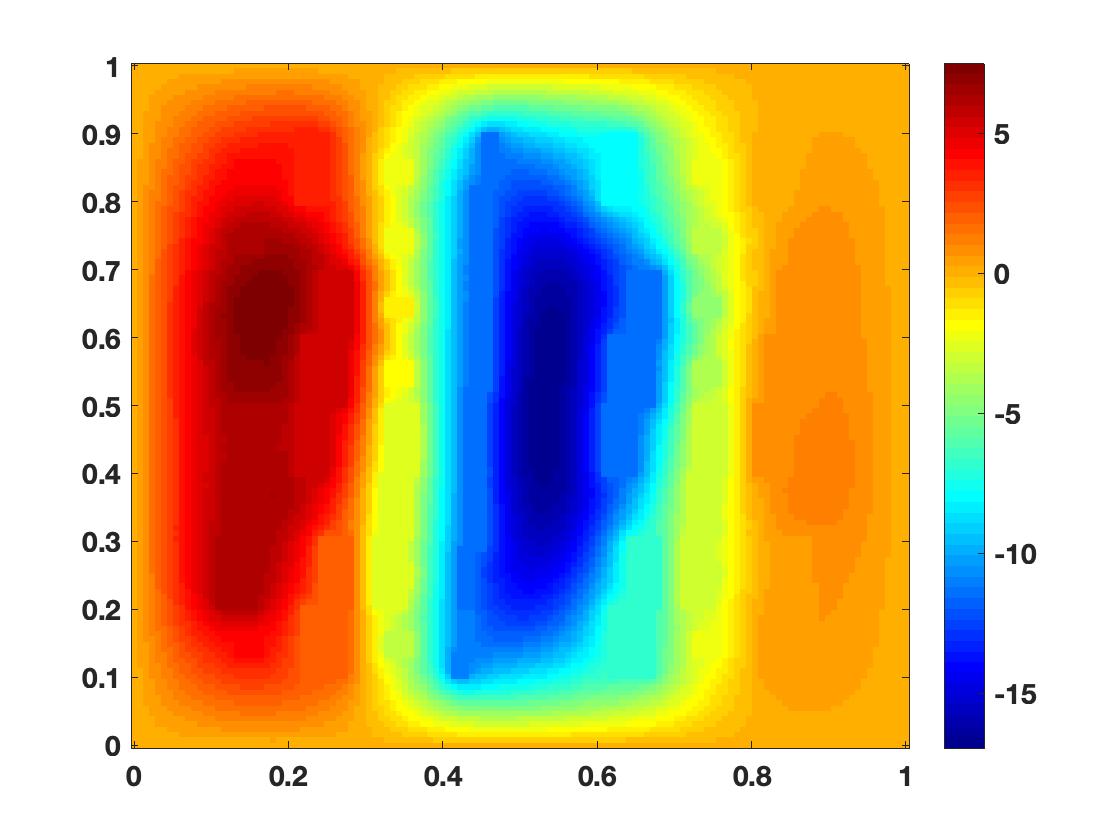}
		\includegraphics[trim={3cm 1.8cm 2.8cm 2cm},clip,width=0.24 \textwidth]{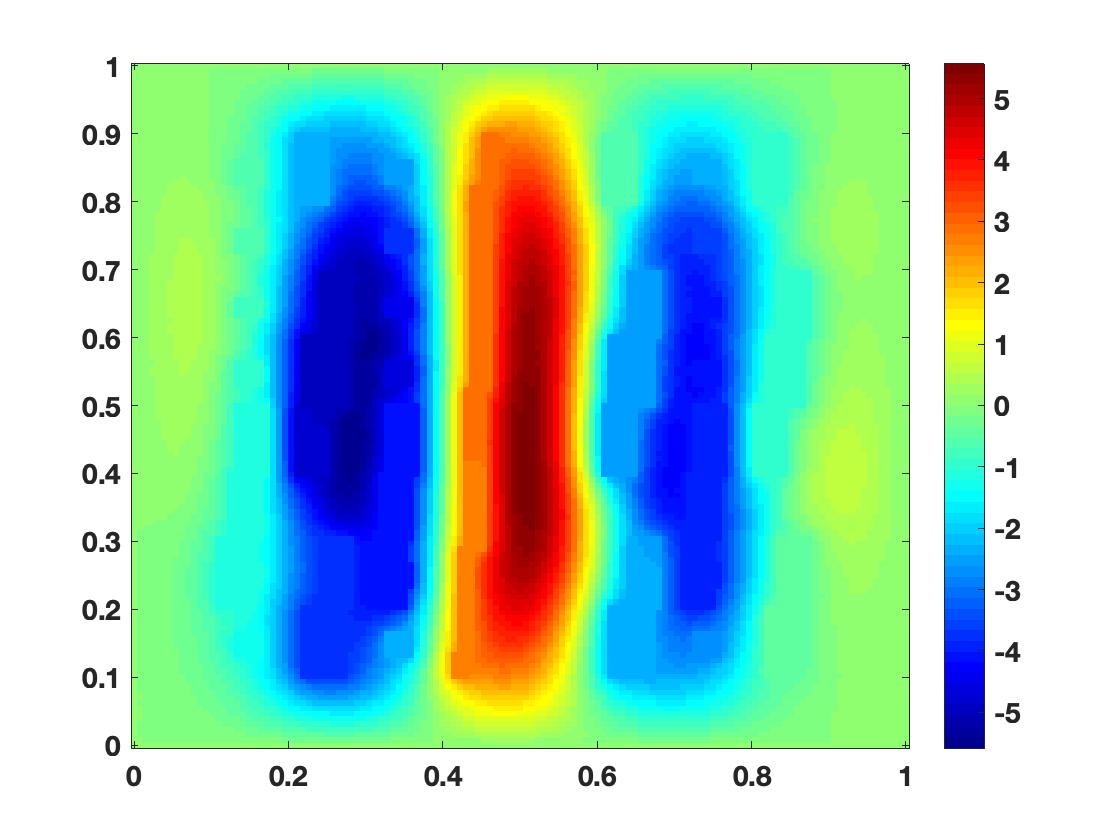}
		\includegraphics[trim={3cm 1.8cm 2.8cm 2cm},clip,width=0.24 \textwidth]{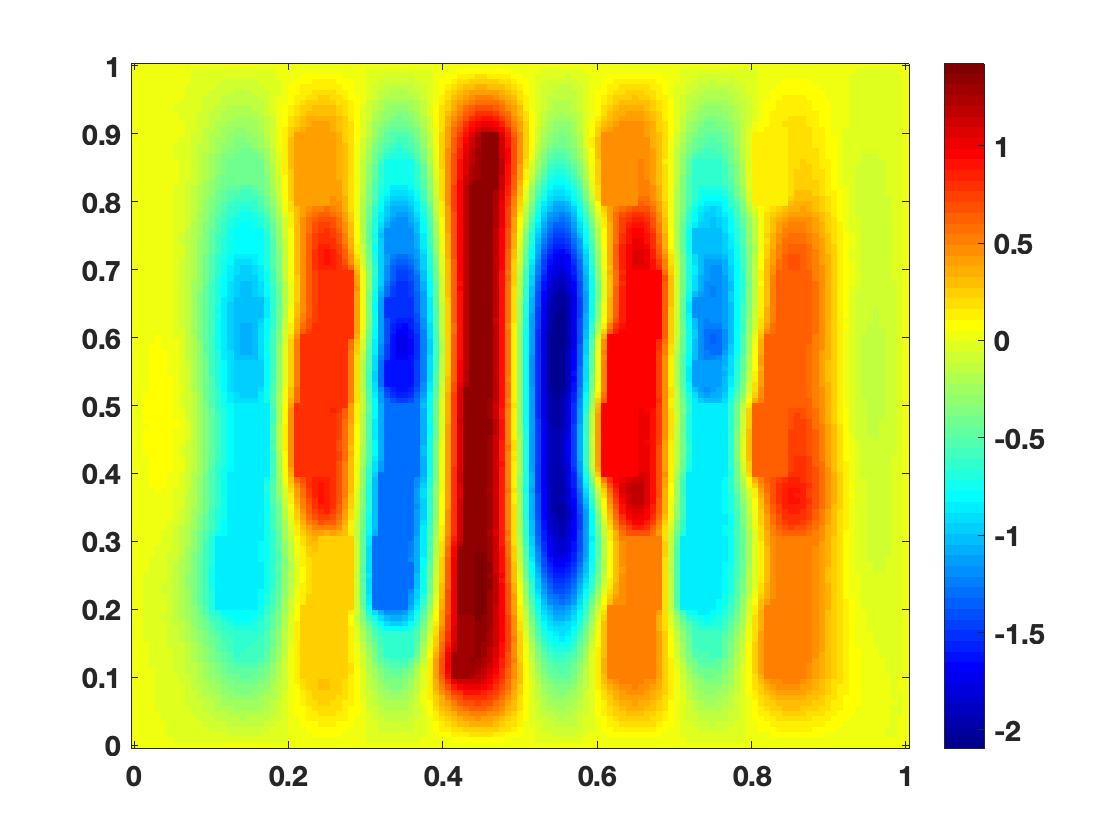}\\
		\includegraphics[trim={3cm 1.8cm 2.8cm 2cm},clip,width=0.24 \textwidth]{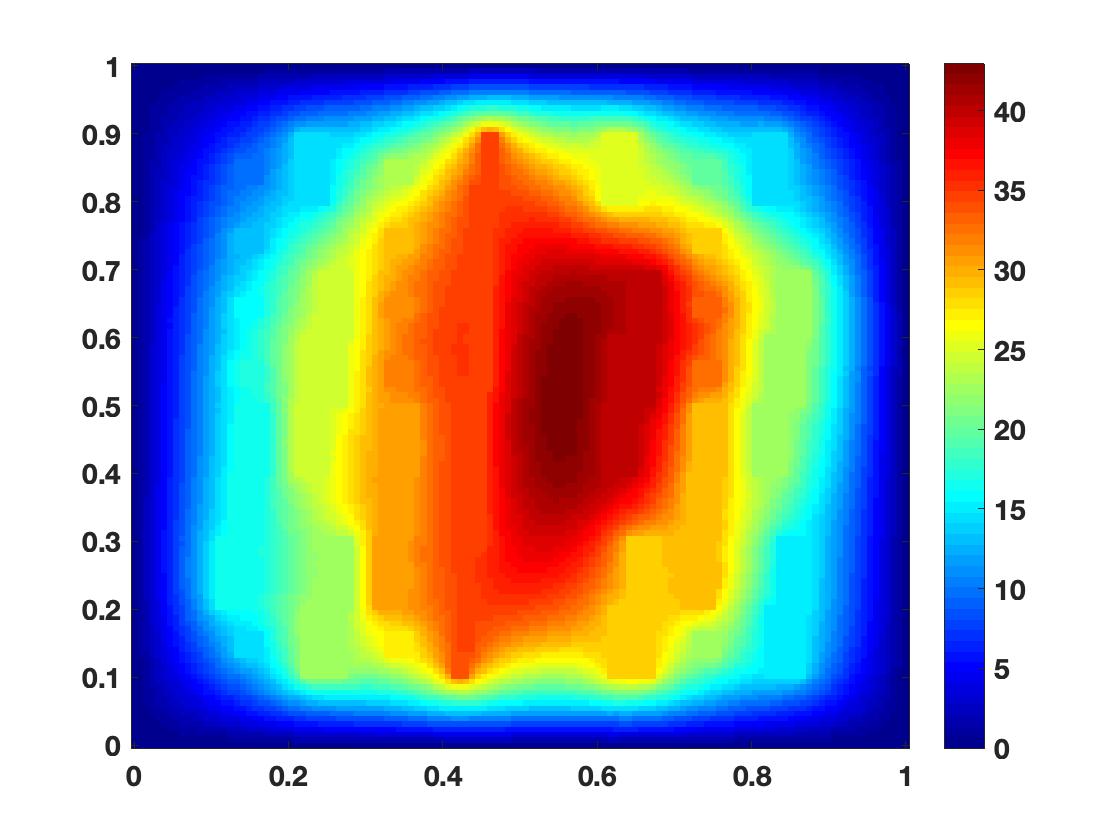}
		\includegraphics[trim={3cm 1.8cm 2.8cm 2cm},clip,width=0.24 \textwidth]{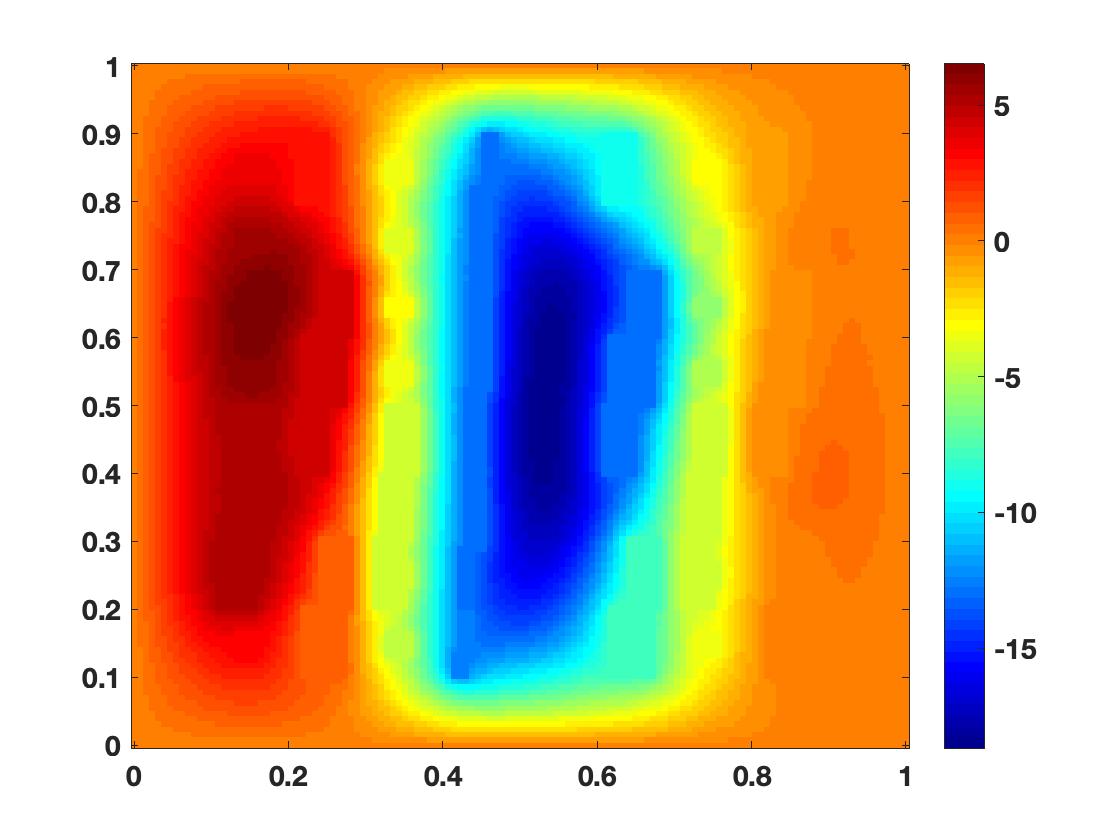}
		\includegraphics[trim={3cm 1.8cm 2.8cm 2cm},clip,width=0.24 \textwidth]{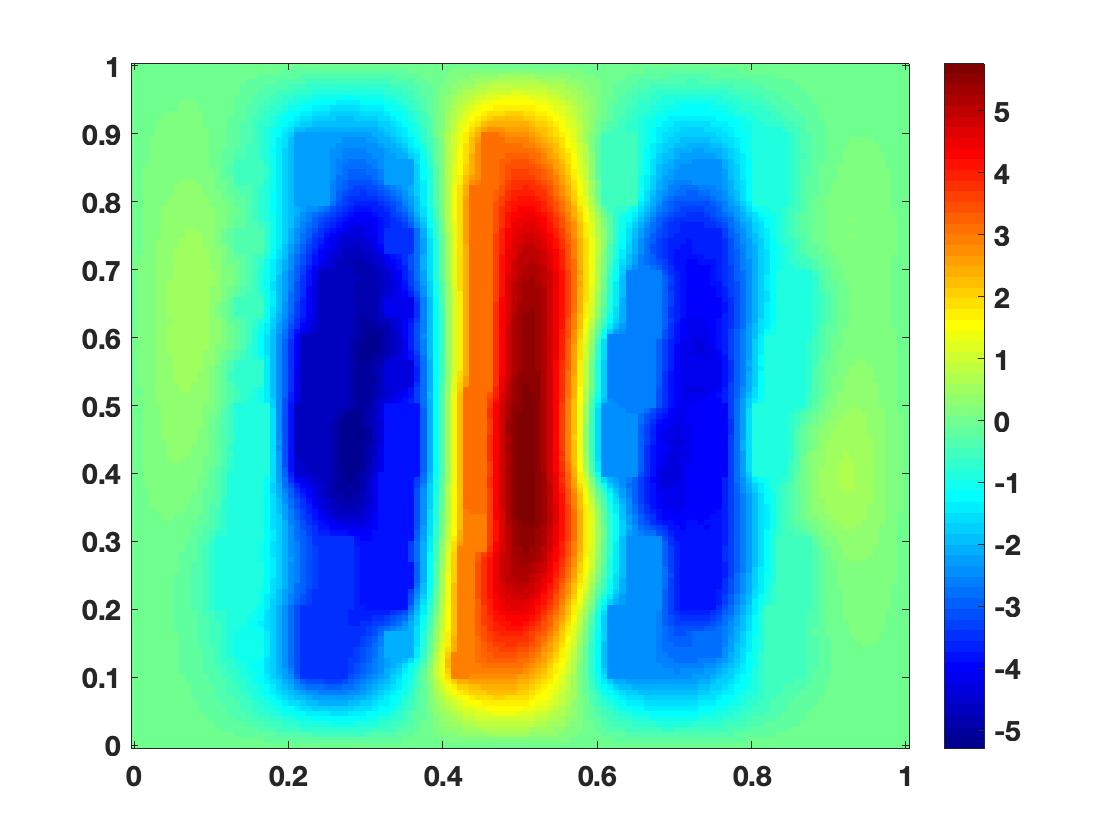}
		\includegraphics[trim={3cm 1.8cm 2.8cm 2cm},clip,width=0.24 \textwidth]{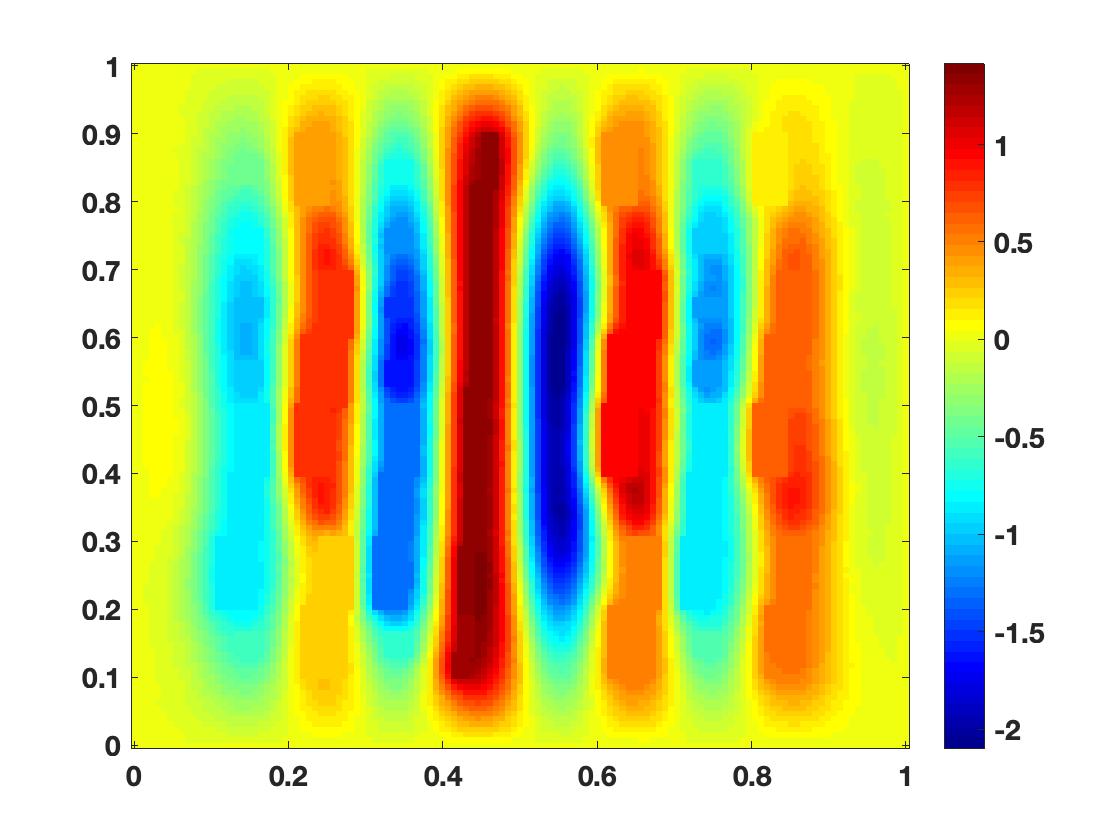}
		\caption{Numerical solutions $ U_k^n$ for $n=1, 3, 5, 10$ from Algorithm \ref{algorithm:wavelet+parareal} with $\Delta T=0.1$ and $\delta t=10^{-3}$, backward Euler scheme: iteration number $k=0$ (top), $k=1$ (middle) and $k=2$ (bottom).}
		\label{fig:PararealSol_NonzeroSource_level2_BE_coarser}
	\end{figure}	
We present the convergence history of Algorithm \ref{algorithm:wavelet+parareal} in relative $L^2(D)$ error and relative $H^1_{\kappa}(D)$ error in Tables \ref{error:L2_NonzeroSource_l2_BE_coarser} and \ref{error:H1_NonzeroSource_l2_BE_coarser}.
In each table, the second column displays the relative error between numerical solutions $u_{\text{ms},\ell}^{\text{EW,n}} $ from Algorithm \ref{algorithm:wavelet} and reference solutions $u^n_h$. The relative errors between numerical solutions $U_k^n$ from Algorithm \ref{algorithm:wavelet+parareal} and reference solutions $u^m_h$ are displayed from the third column towards the last column.

\begin{table}[H]
	\begin{center}
	\begin{tabular}{|c|c|c|c|c|c|c|}
	\hline
	$T^n$ & $\text{Rel}^{\text{EW}}_{L^2} (T^n)$  &$\text{Rel}^0_{L^2}(T^n)$& $\text{Rel}^1_{L^2}(T^n)$&$\text{Rel}^2_{L^2}(T^n)$& $\text{Rel}^3_{L^2}(T^n)$& $\text{Rel}^4_{L^2}(T^n)$
	\\ \hline
	0.1 &  0.5671  &14.3828 & 0.5671 & 0.5671 & 0.5671 & 0.5671 \\
 0.2 & 0.8234   & 30.6711 & 4.0891 & 0.8234 & 0.8234 & 0.8234 \\
 0.3 &  0.8258   &  42.2133 & 13.6117 & 0.9514 & 0.8258 & 0.8258 \\
 0.4 &   0.5897  & 22.5947 & 8.2614 & 4.9882 & 0.5569 & 0.5897 \\
 0.5 &   0.5323  & 11.4592 & 6.4804 & 1.1830 & 2.2190 & 0.5022 \\
 0.6 &  0.7072   & 8.6259 & 1.8506 & 2.0906 & 1.1444 & 0.7087 \\
 0.7 &  0.7229   & 8.9372 & 1.0193 & 1.2827 & 0.7229 & 0.7531 \\
 0.8 &  0.9680   & 11.0146 & 1.9673 & 0.9901 & 1.0137 & 0.9766 \\
 0.9 &   1.0681  & 6.2513 & 1.9613 & 1.2086 & 1.0517 & 1.0779 \\
 1.0 &  0.9145   & 5.1982 & 1.0812 & 0.9423 & 0.9399 & 0.9096\\
	\hline
	\end{tabular}
	\end{center}
\vspace{-.4cm}
	\caption{Convergence history of Algorithm \ref{algorithm:wavelet+parareal} in relative $L^2(D)$ error for Experiment 1: backward Euler scheme with ${\Delta T}=0.1$ and ${\delta t}=10^{-3}$.}
	\label{error:L2_NonzeroSource_l2_BE_coarser}
	\end{table}

	\begin{table}[H]
	\begin{center}
	\begin{tabular}{|c|c|c|c|c|c|c|}
	\hline
	$T^n$ & $\text{Rel}^{\text{EW}}_{H_{\kappa}^1} (T^n)$&$\text{Rel}^0_{H_{\kappa}^1}(T^n)$& $\text{Rel}^1_{H_{\kappa}^1}(T^n)$& $\text{Rel}^2_{H_{\kappa}^1}(T^n)$& $\text{Rel}^3_{H_{\kappa}^1}(T^n)$& $\text{Rel}^4_{H_{\kappa}^1}(T^n)$
	\\ \hline
	0.1 &  6.9437     & 16.5638 & 6.9437 & 6.9437 & 6.9437 & 6.9437 \\
 0.2 &   5.6489     &25.0878 & 6.6353 & 5.6489 & 5.6489 & 5.6489 \\
 0.3 &     4.9158   &27.6524 & 9.5624 & 4.9600 & 4.9158 & 4.9158 \\
 0.4 &     4.7240   &16.6013 & 6.2652 & 5.2435 & 4.7283 & 4.7240 \\
 0.5 &     4.8984  & 9.3734 & 5.6879 & 4.9166 & 4.9356 & 4.8997 \\
 0.6 &     5.3109   &7.9758 & 5.3640 & 5.3383 & 5.3148 & 5.3130 \\
 0.7 &     5.3064   &6.9107 & 5.3173 & 5.3128 & 5.3066 & 5.3067 \\
 0.8 &   6.2666     &7.6015 & 6.2857 & 6.2666 & 6.2671 & 6.2667 \\
 0.9 &   6.4270     &7.0167 & 6.4435 & 6.4279 & 6.4270 & 6.4270 \\
 1.0 &  4.9341      &5.2526 & 4.9371 & 4.9343 & 4.9341 & 4.9341 \\
	\hline
	\end{tabular}
	\end{center}
	\vspace{-.4cm}
	\caption{Convergence history of Algorithm \ref{algorithm:wavelet+parareal}  in relative $H_{\kappa}^1(D)  $ error  for Experiment 1: backward Euler scheme with ${\Delta T}=0.1$ and ${\delta t}=10^{-3}$.}
	\label{error:H1_NonzeroSource_l2_BE_coarser}
	\end{table}
One can observe from Tables \ref{error:L2_NonzeroSource_l2_BE_coarser} and \ref{error:H1_NonzeroSource_l2_BE_coarser} that 4 iterations is sufficient for Algorithm \ref{algorithm:wavelet+parareal} to attain the same accuracy as Algortithm \ref{algorithm:wavelet} for all discrete time levels under the $L^2(D)$-norm, while 2 iterations under $H_{\kappa}^1(D)$-norm. For each iteration, it involves solving the original system for $10$ times using coarse solver and using fine solvers for $100$ times on each time interval in parallel. However, to obtain multiscale solutions from Algorithm \ref{algorithm:wavelet}, it involves solving original system for about $1000$ times. So, with the aid of large number of  processors, it could save a lot of time to obtain numerical solutions from Algorithm \ref{algorithm:wavelet+parareal}.

	
\subsubsection*{Experiment 2: Crank-Nicolson with $\frac{\Delta T}{\delta t}=100$}
Since the backward Euler scheme is only first order accurate, a higher order accurate scheme can improve the performance of
Algorithm \ref{algorithm:wavelet} and Algorithm \ref{algorithm:wavelet+parareal}. This can be seen from the proof to Theorem \ref{prop:wavelet-based}. In this section, we will present the numerical tests with Crank-Nicolson scheme for both algorithms.

 A direct application of Crank-Nicolson scheme as a time discretization fails to maintain second order accuracy due to the possible blow up of the eigenvalues of the elliptic operator $-\nabla\cdot(\kappa\nabla \cdot)$ when $\eta_{\text{max}}\to\infty$. To improve its performance and maintain second order convergence rate, we use 3 steps of backward Euler scheme before Crank-Nicolson scheme kicks in \cite{smoothing_CN,thomee1984galerkin}.

 The multiscale solutions from Algorithm \ref{algorithm:wavelet} with Crank-Nicolson scheme are presented in Figure \ref{fig:EWsol_NonzeroSource_level2_CN_coarser}. We present  the numerical solutions $U^n_k$ for $n=1,3,5,10$ from Algorithm \ref{algorithm:wavelet+parareal} with iteration $k=0,1,2$ in Figure \ref{fig:PararealSol_NonzeroSource_level2_CN_coarser}. One can observe the same convergence behavior as in Experiment 1.
\begin{figure}[H]
		\centering
		\includegraphics[trim={3cm 1.8cm 2.8cm 2cm},clip,width=0.24 \textwidth]{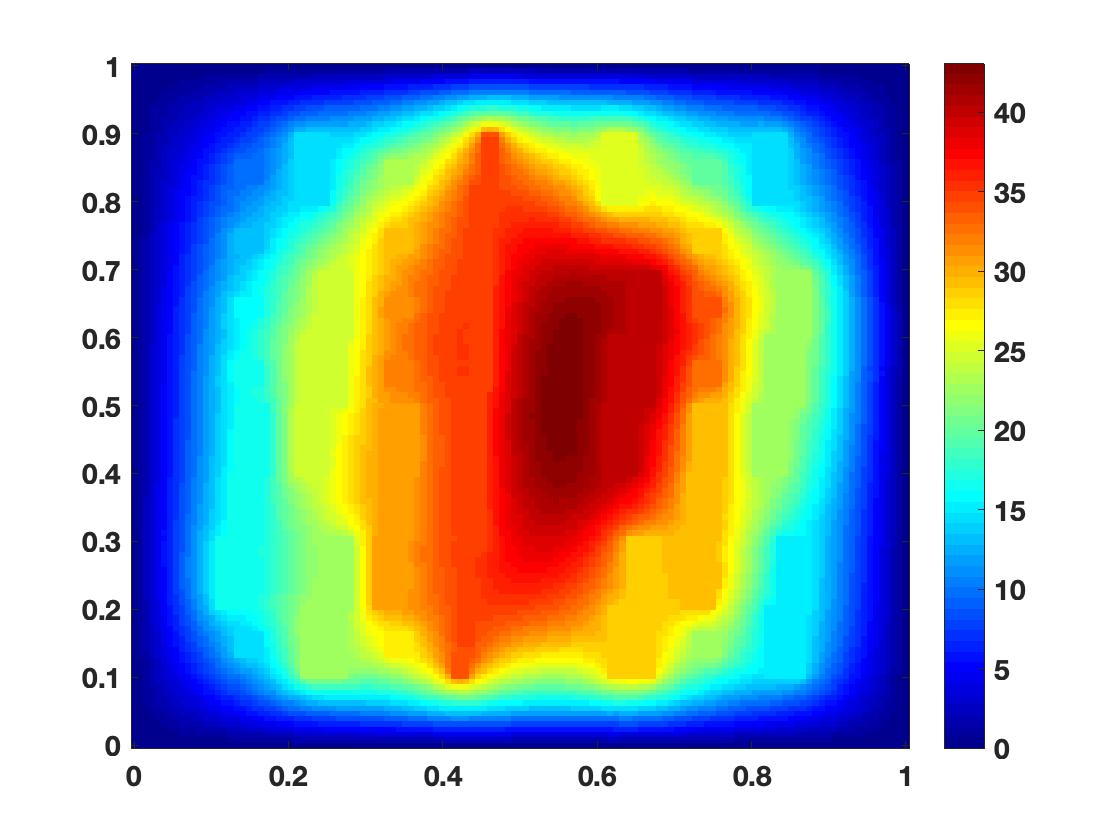}
		\includegraphics[trim={3cm 1.8cm 2.8cm 2cm},clip,width=0.24 \textwidth]{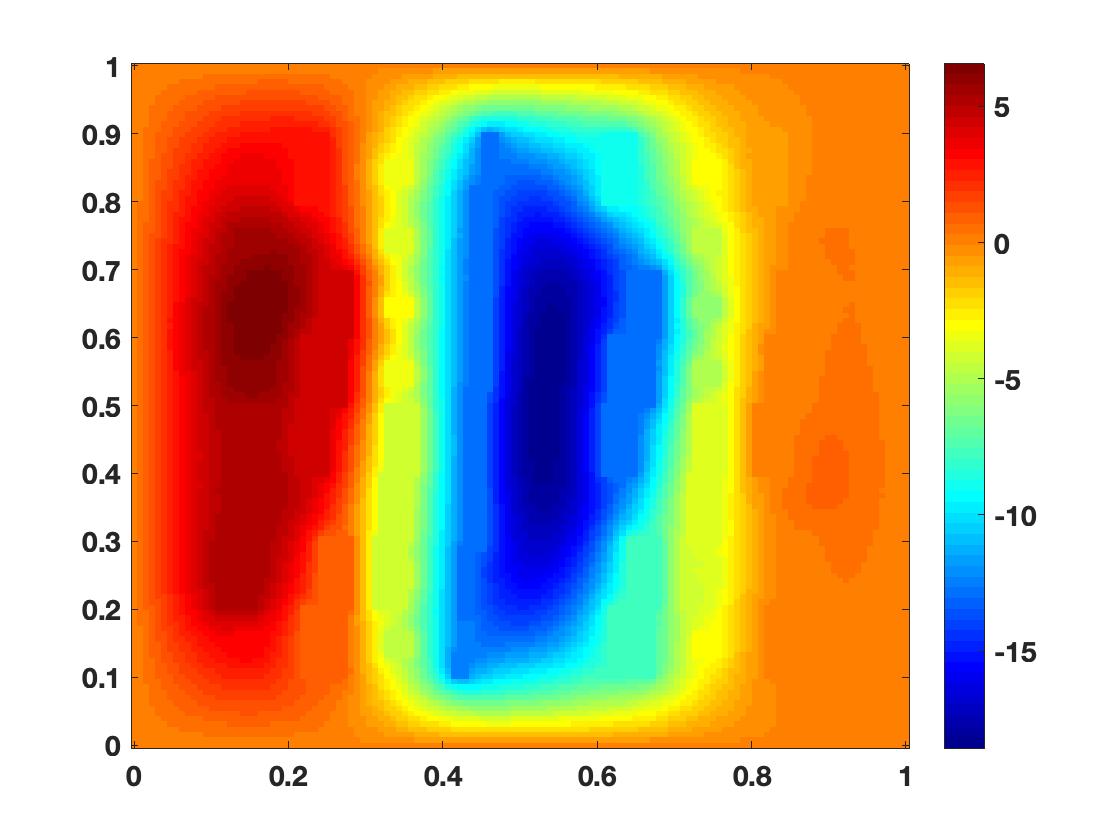}
		\includegraphics[trim={3cm 1.8cm 2.8cm 2cm},clip,width=0.24 \textwidth]{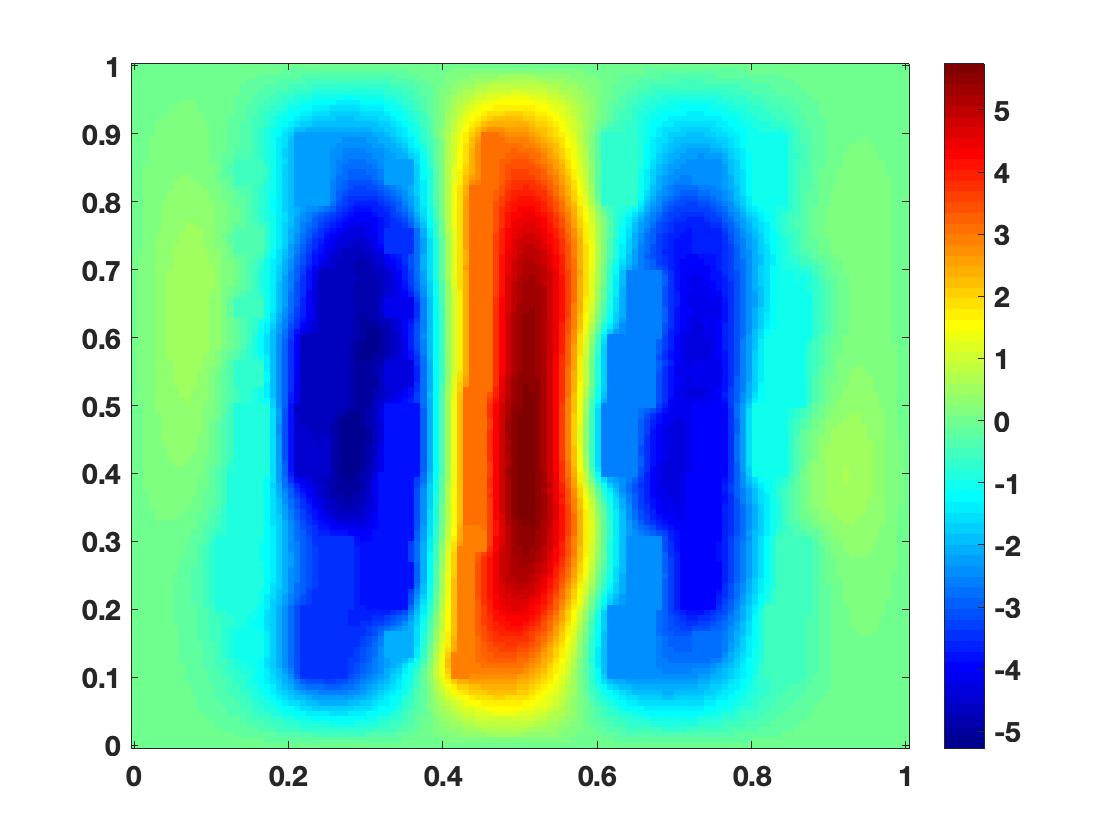}
		\includegraphics[trim={3cm 1.8cm 2.8cm 2cm},clip,width=0.24 \textwidth]{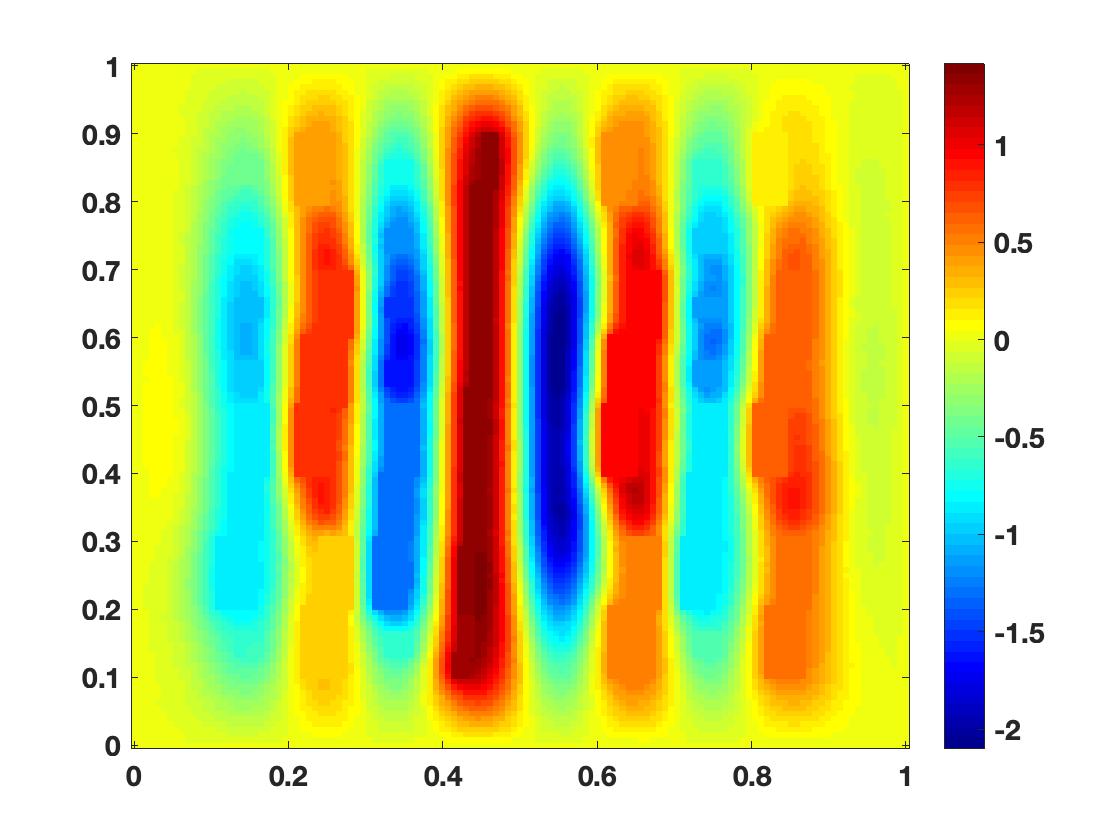}		
		\caption{Multiscale solution from Algorithm \ref{algorithm:wavelet} with $\delta t=10^{-3}$ and $\ell=2$, Crank-Nicolson scheme: $u_{\text{ms},\ell}^{\text{EW,100}}$, $u_{\text{ms},\ell}^{\text{EW,300}}$,  $u_{\text{ms},\ell}^{\text{EW,500}}$ and $u_{\text{ms},\ell}^{\text{EW,1000}}$.}
		\label{fig:EWsol_NonzeroSource_level2_CN_coarser}
	\end{figure}	


\begin{figure}[H]
		\centering
		\includegraphics[trim={3cm 1.8cm 2.8cm 2cm},clip,width=0.24 \textwidth]{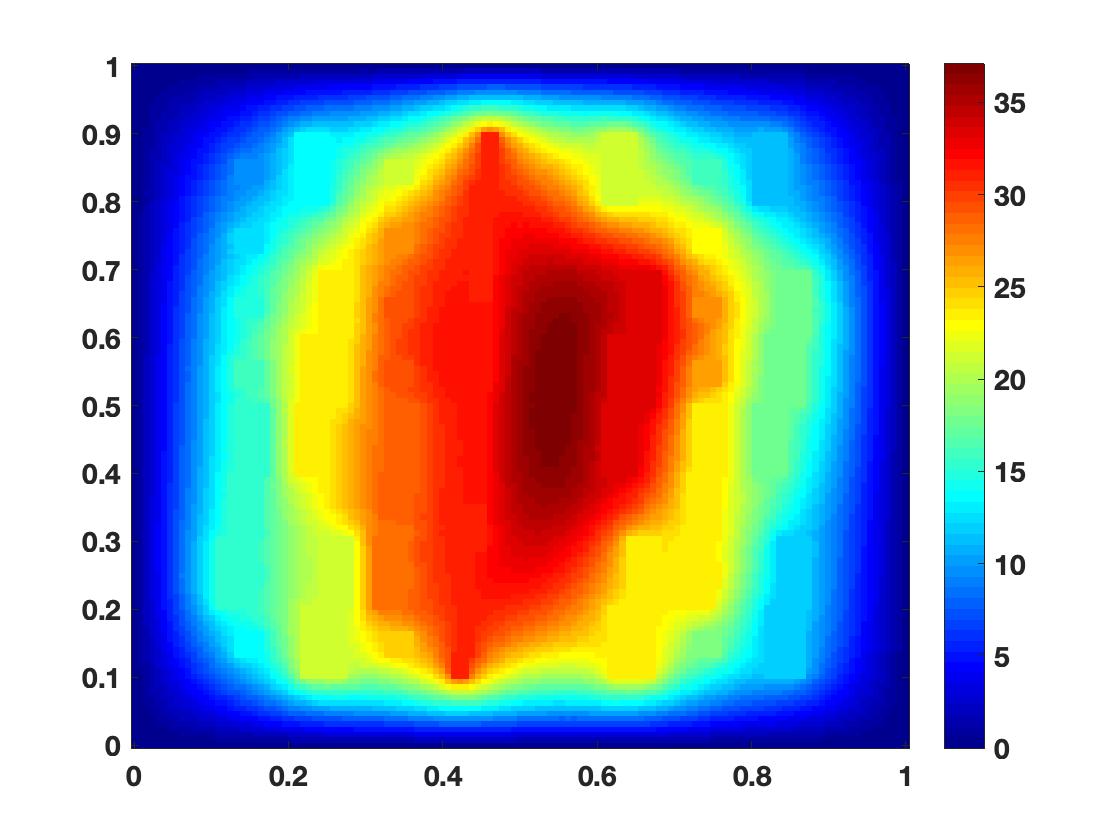}
		\includegraphics[trim={3cm 1.8cm 2.8cm 2cm},clip,width=0.24 \textwidth]{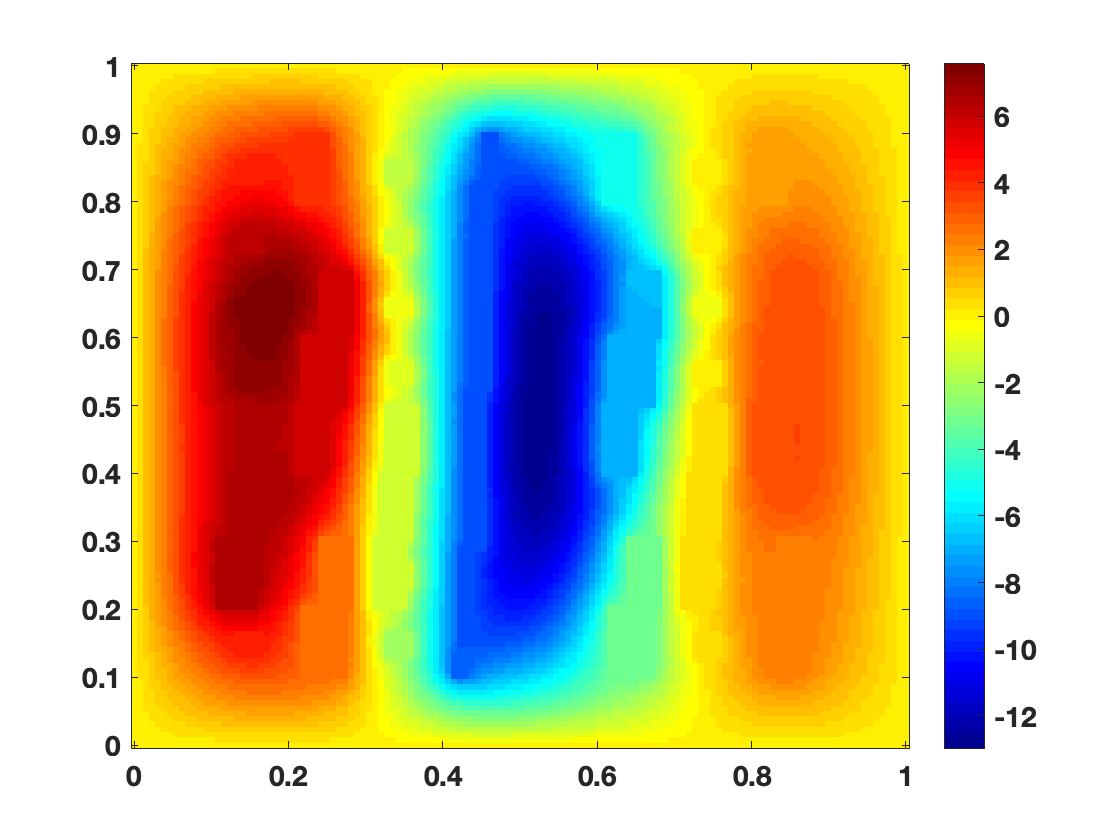}
		\includegraphics[trim={3cm 1.8cm 2.8cm 2cm},clip,width=0.24 \textwidth]{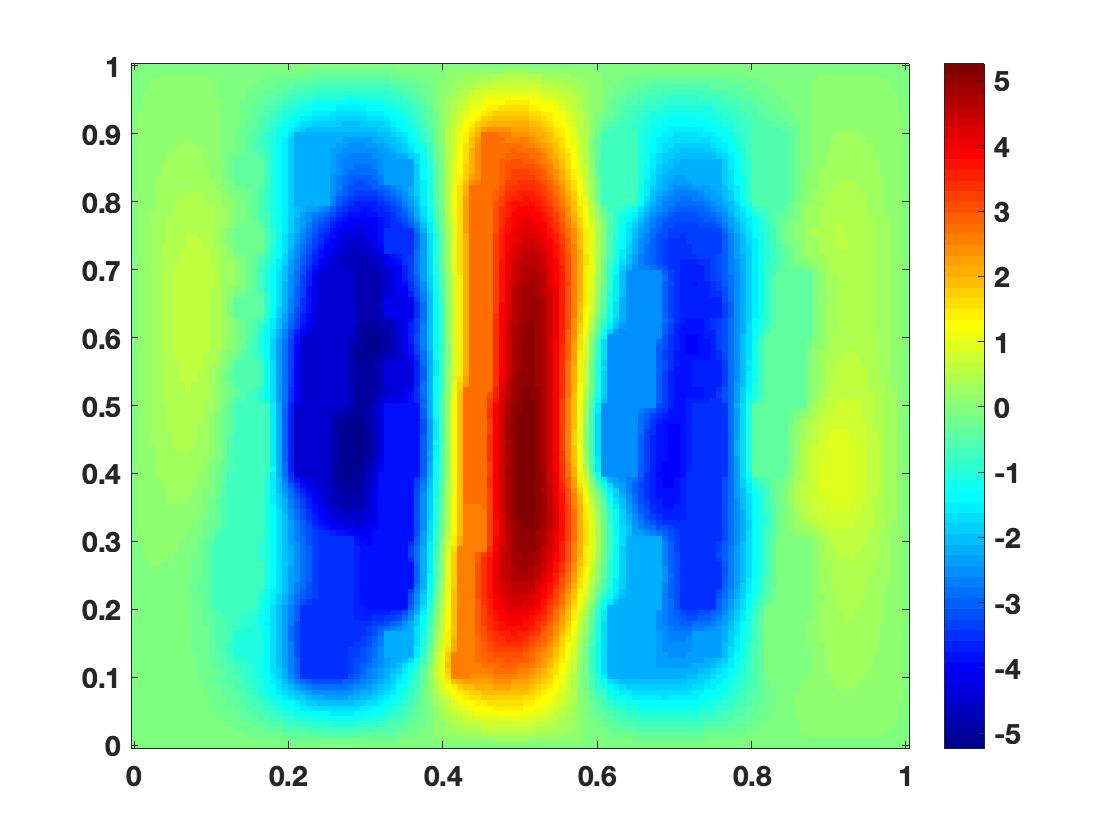}
		\includegraphics[trim={3cm 1.8cm 2.8cm 2cm},clip,width=0.24 \textwidth]{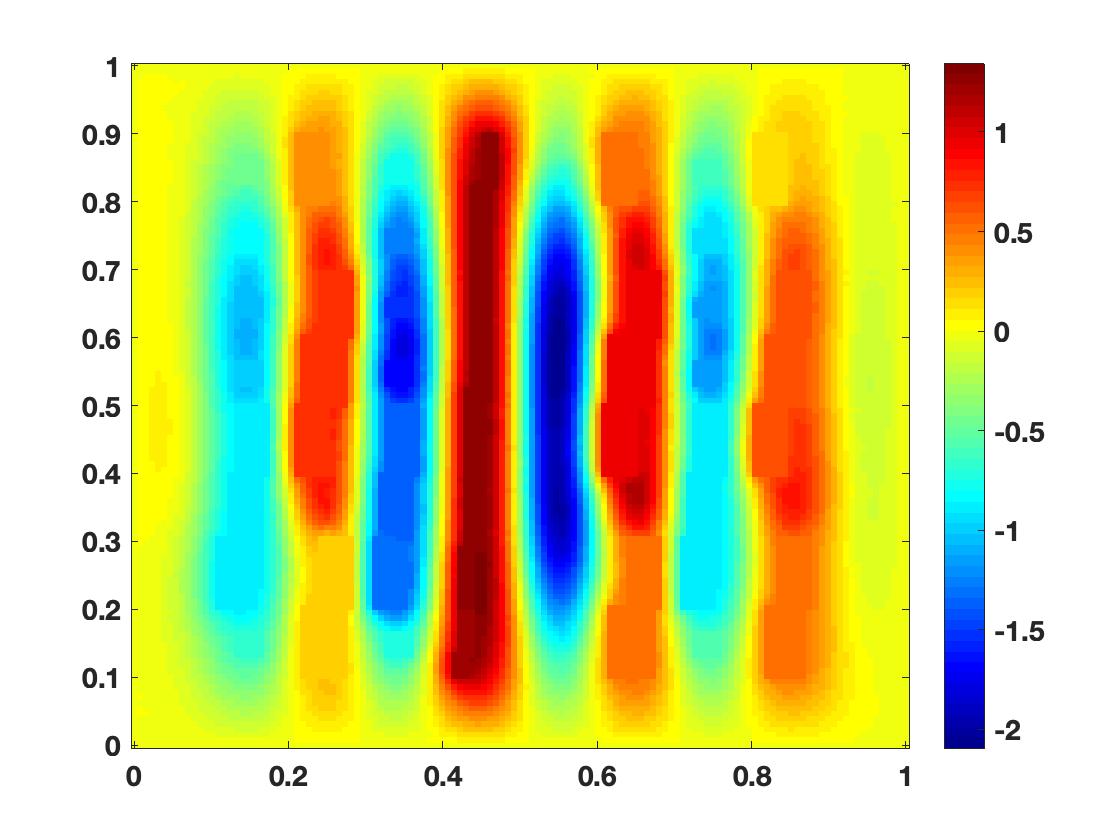}\\
		\includegraphics[trim={3cm 1.8cm 2.8cm 2cm},clip,width=0.24 \textwidth]{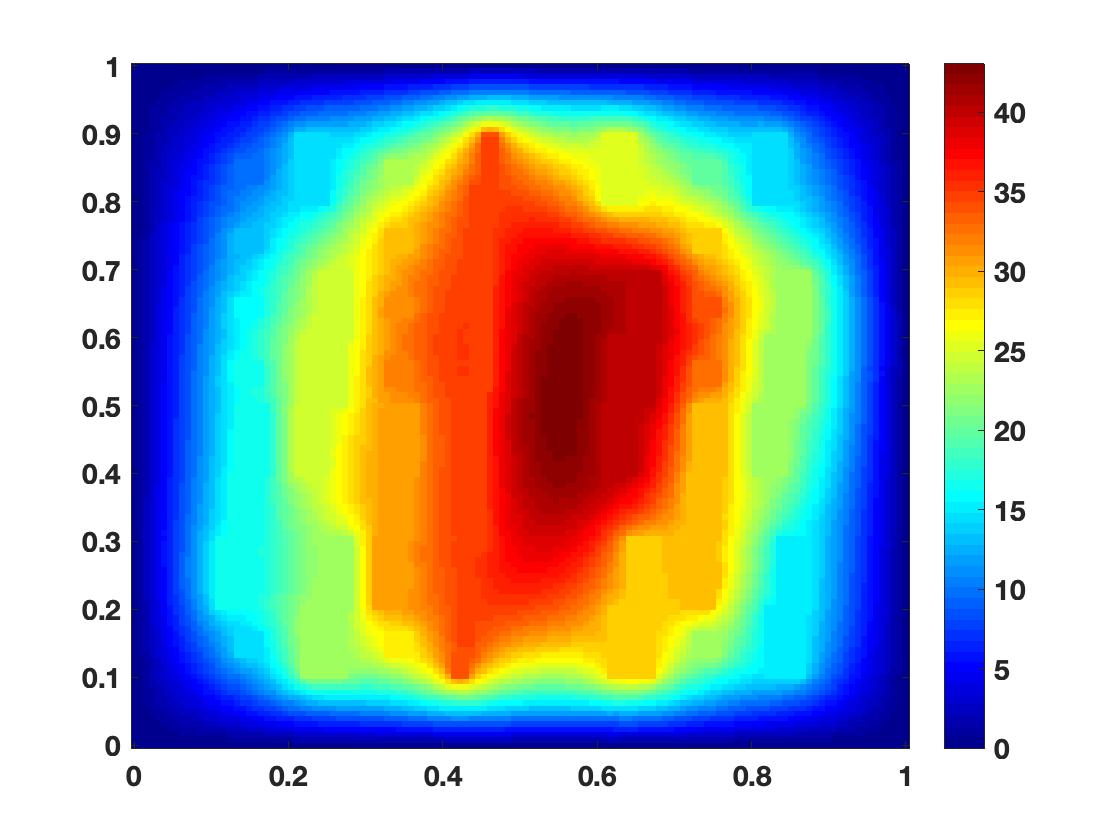}
		\includegraphics[trim={3cm 1.8cm 2.8cm 2cm},clip,width=0.24 \textwidth]{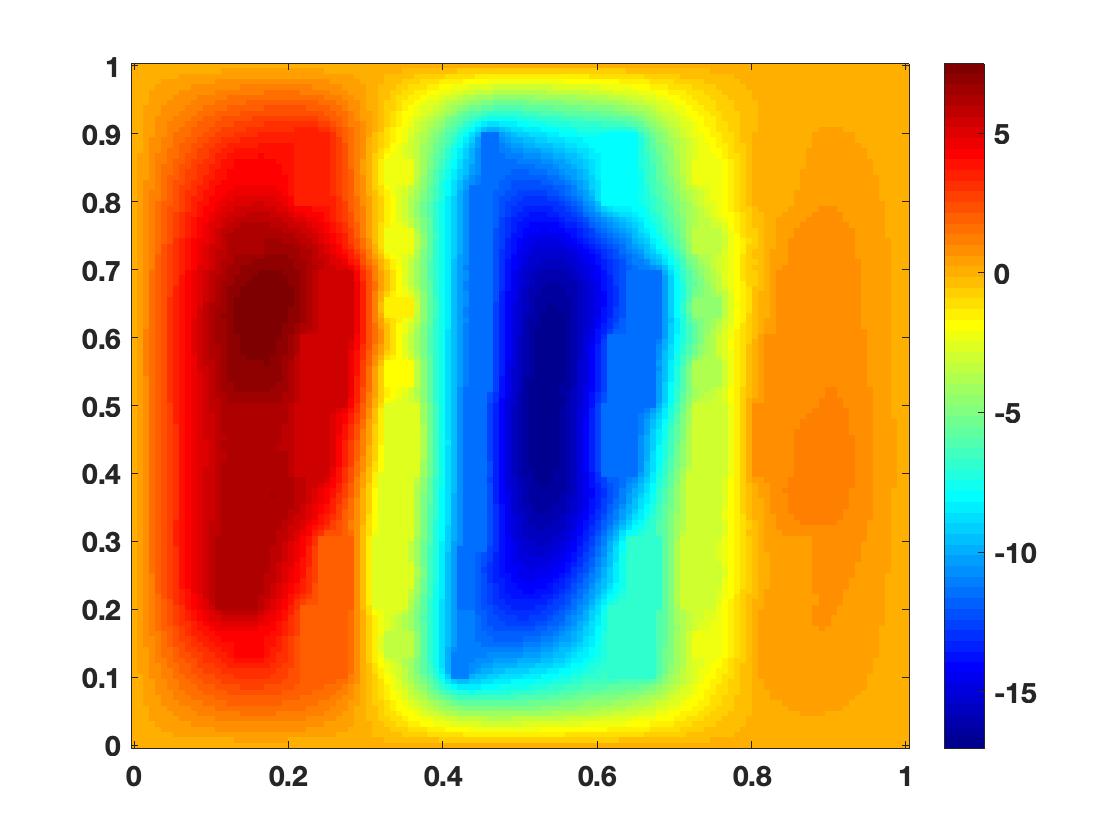}
		\includegraphics[trim={3cm 1.8cm 2.8cm 2cm},clip,width=0.24 \textwidth]{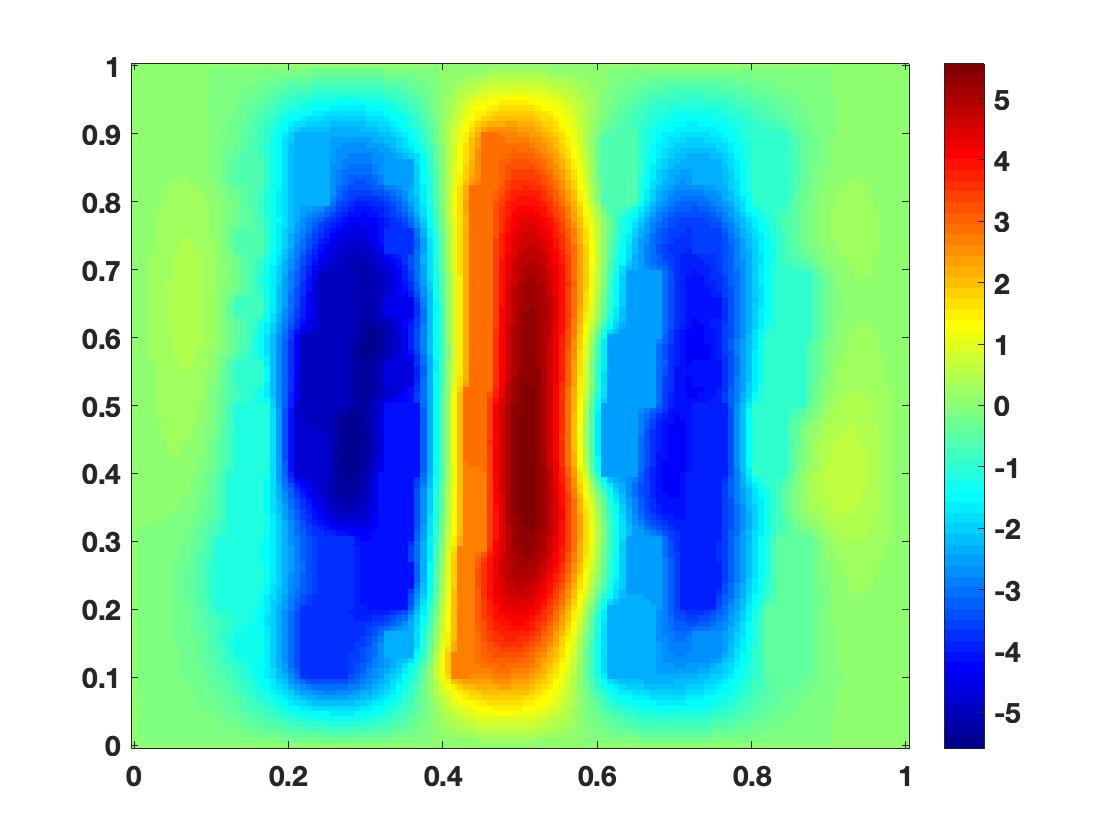}
		\includegraphics[trim={3cm 1.8cm 2.8cm 2cm},clip,width=0.24 \textwidth]{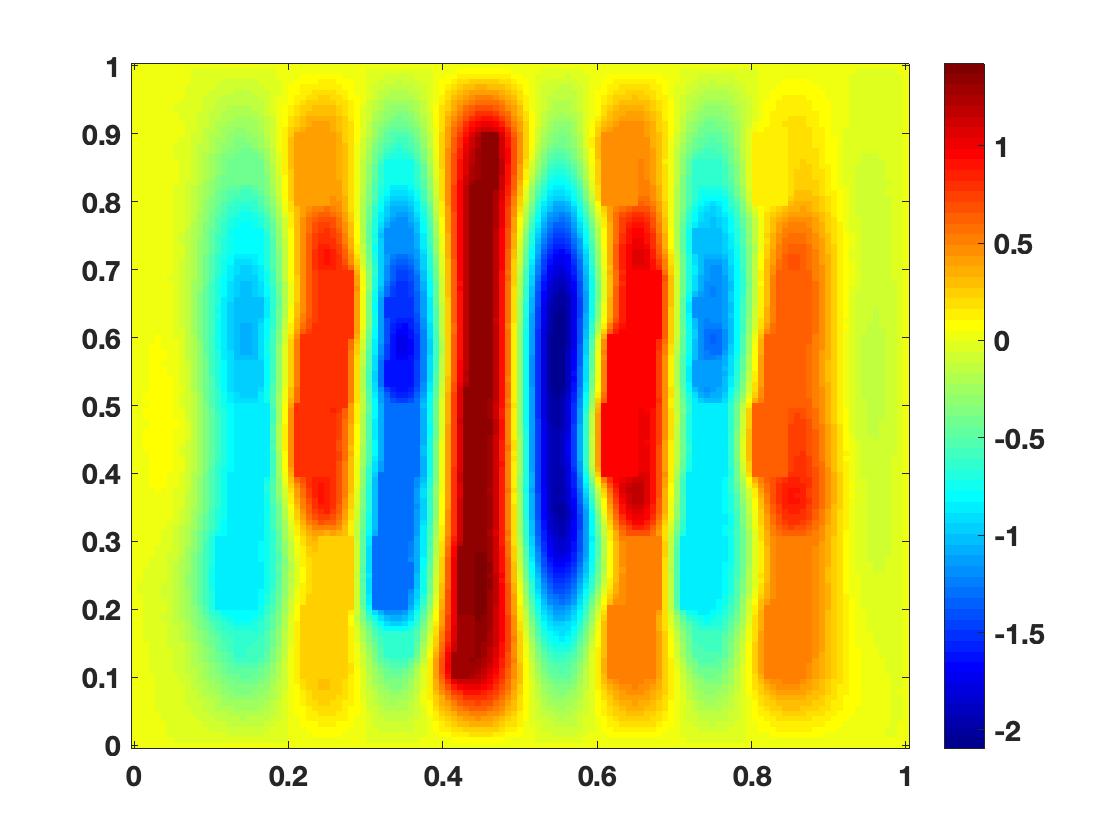}\\
		\includegraphics[trim={3cm 1.8cm 2.8cm 2cm},clip,width=0.24 \textwidth]{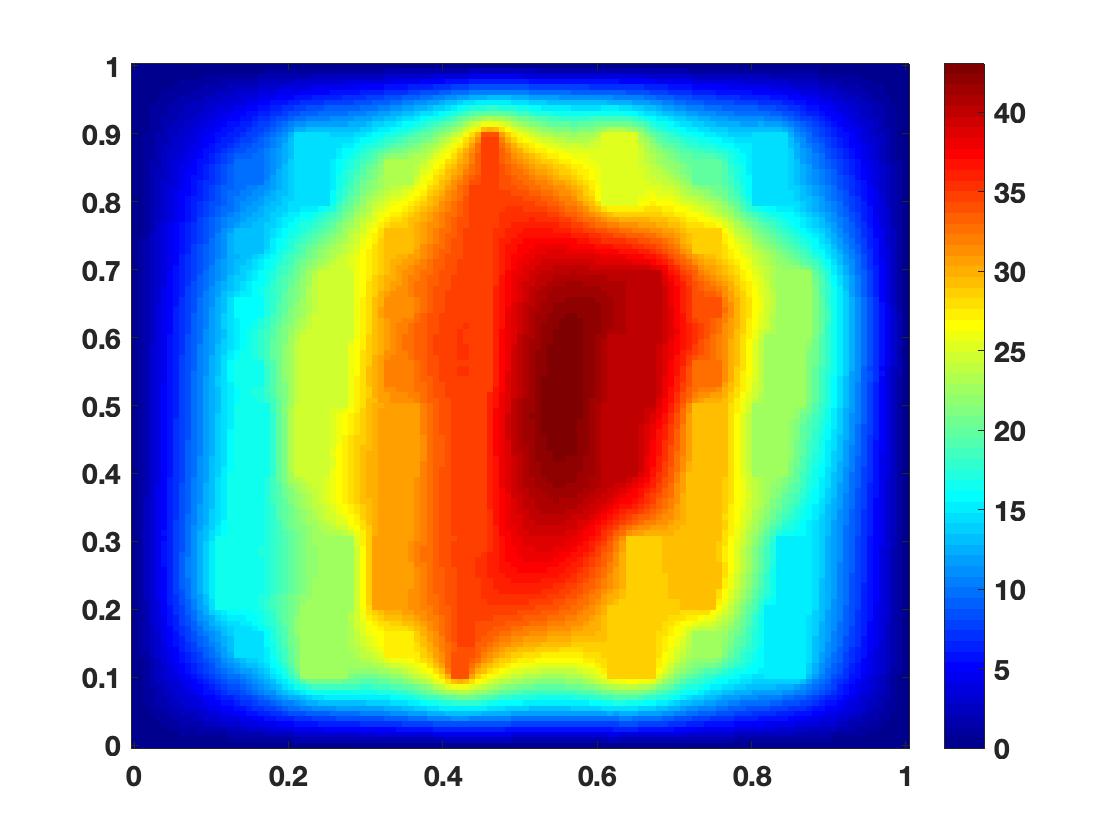}
		\includegraphics[trim={3cm 1.8cm 2.8cm 2cm},clip,width=0.24 \textwidth]{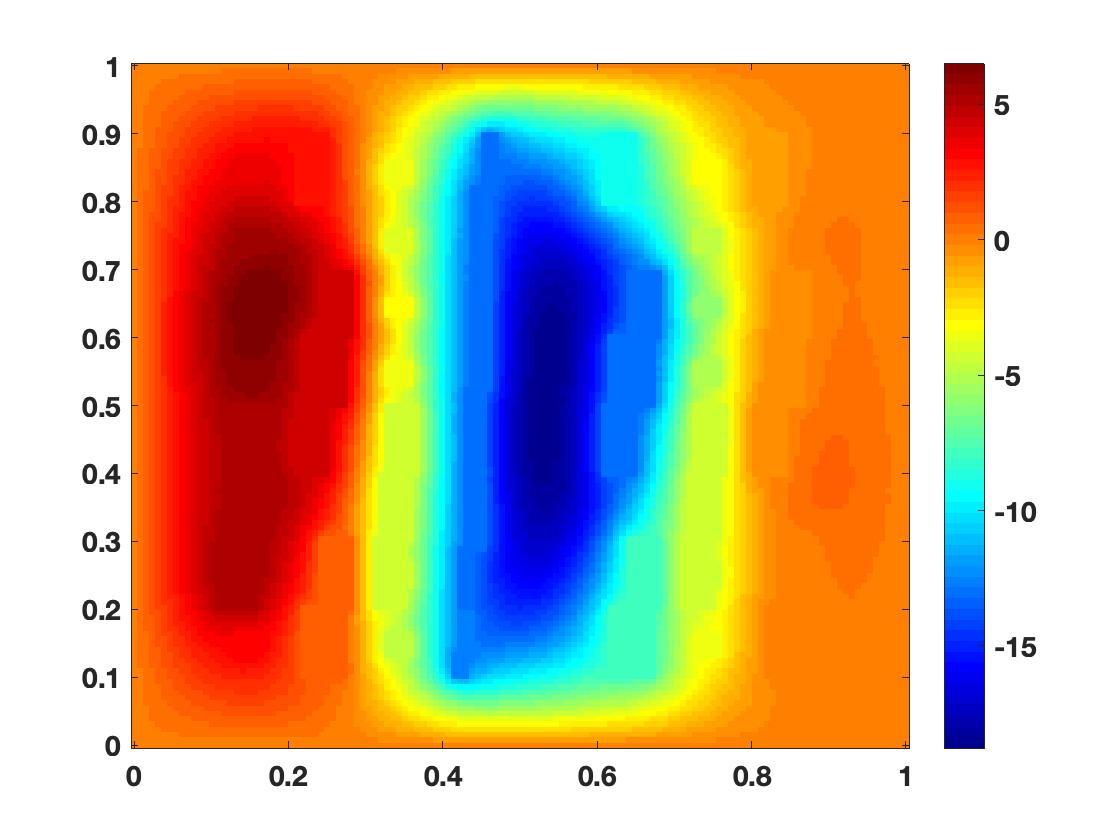}
		\includegraphics[trim={3cm 1.8cm 2.8cm 2cm},clip,width=0.24 \textwidth]{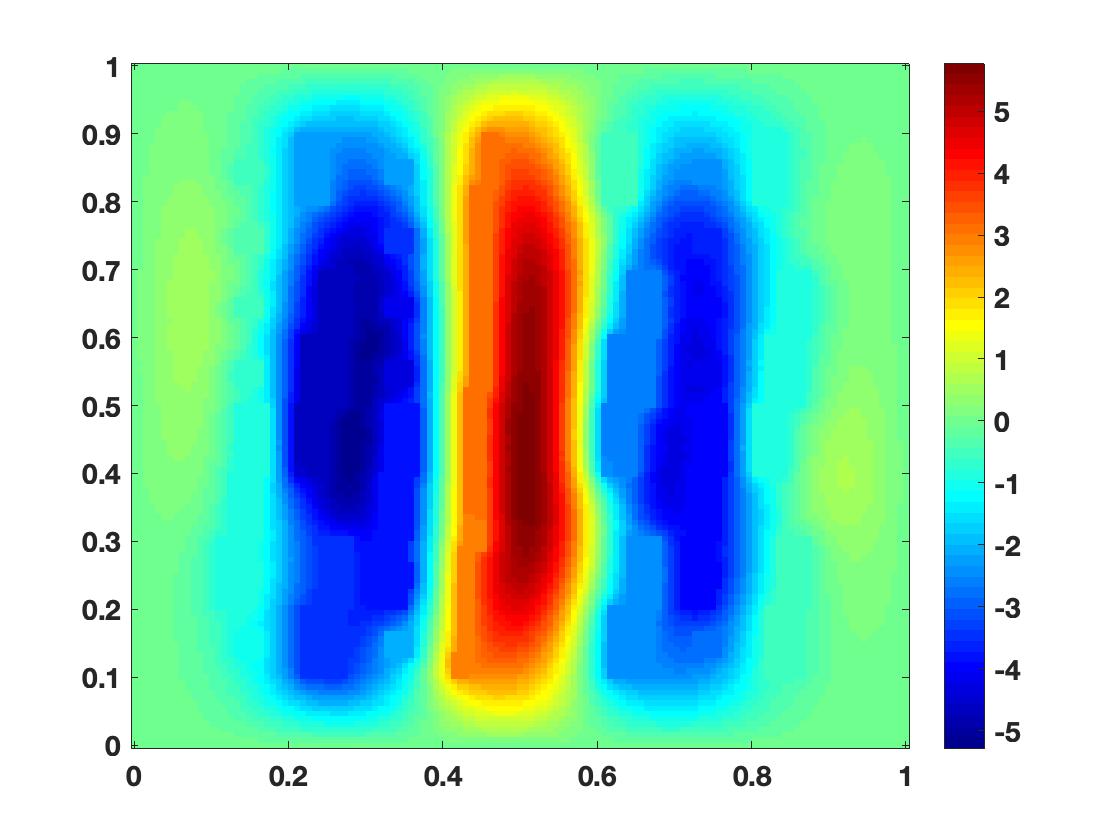}
		\includegraphics[trim={3cm 1.8cm 2.8cm 2cm},clip,width=0.24 \textwidth]{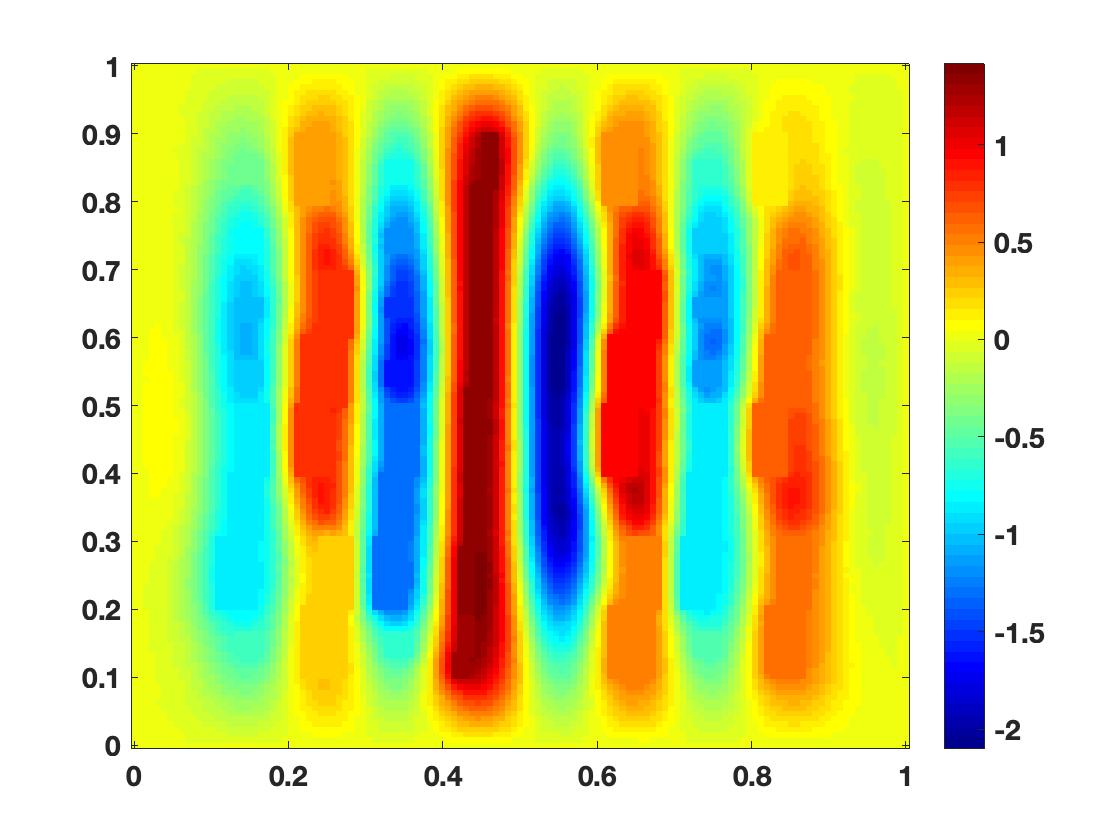}
		\caption{Numerical solutions $ U_k^n$ for $n=1, 3, 5, 10$ from Algorithm \ref{algorithm:wavelet+parareal} with $\Delta T=0.1$ and $\delta t=10^{-3}$, Crank-Nicolson scheme: iteration number $k=0$ (top), $k=1$ (middle) and $k=2$ (bottom).}
		\label{fig:PararealSol_NonzeroSource_level2_CN_coarser}
	\end{figure}	
The convergence history of Algorithm \ref{algorithm:wavelet+parareal} in $L^2(D)$-norm and $H^1_{\kappa}(D)$-norm is presented in Tables \ref{error:L2_NonzeroSource_l2_CN_coarser} and \ref{error:H1_NonzeroSource_l2_CN_coarser}, respectively. Similar to Experiment 1, we observe that 4 iterations is sufficient for Algorithm \ref{algorithm:wavelet+parareal} to reach the same accuracy as Algorithm \ref{algorithm:wavelet} at all discrete time levels under the $L^2(D)$-norm, while 2 iterations under the $H^1_{\kappa}(D)$-norm. Comparing Table \ref{error:L2_NonzeroSource_l2_BE_coarser} with Table \ref{error:L2_NonzeroSource_l2_CN_coarser}, one observes that Algorithm \ref{algorithm:wavelet+parareal} with Crank-Nicolson scheme outperforms that with backward Euler scheme under $L^2(D)$-norm.
	\begin{table}[H]
	\begin{center}
	\begin{tabular}{|c|c|c|c|c|c|c|}
	\hline
	$T^n$ & $\text{Rel}^{\text{EW}}_{L^2} (T^n)$  &$\text{Rel}^0_{L^2}(T^n)$& $\text{Rel}^1_{L^2}(T^n)$&$\text{Rel}^2_{L^2}(T^n)$& $\text{Rel}^3_{L^2}(T^n)$& $\text{Rel}^4_{L^2}(T^n)$
	\\ \hline
	0.1 &  0.3527    & 14.3828 & 0.3527 & 0.3527 & 0.3527 & 0.3527 \\
 0.2 &  0.5438     &30.6711 & 4.3959 & 0.5485 & 0.5485 & 0.5485 \\
 0.3 &   0.3510    &42.2133 & 13.2555 & 1.6259 & 0.3515 & 0.3515 \\
 0.4 &   0.3443    &22.5947 & 8.2409 & 4.9804 & 0.5513 & 0.3431 \\
 0.5 &   0.4847    &11.4592 & 6.4455 & 1.0726 & 2.2873 & 0.4642 \\
 0.6 &   0.7121    &8.6259 & 1.7874 & 2.1688 & 1.1929 & 0.6720 \\
 0.7 &    0.7049   &8.9372 & 0.9898 & 1.2377 & 0.7162 & 0.7834 \\
 0.8 &   0.9467    &11.0146 & 1.8508 & 0.9946 & 0.9773 & 0.9465 \\
 0.9 &   1.0636    &6.2513 & 1.8811 & 1.1818 & 1.0505 & 1.0737 \\
 1.0 &   0.9072    &5.1982 & 1.0806 & 0.9286 & 0.9252 & 0.9054 \\
	\hline
	\end{tabular}
	\end{center}
	\vspace{-.4cm}
	\caption{Convergence history of Algorithm \ref{algorithm:wavelet+parareal}  in relative $L^2(D)$ error for Experiment 2: Crank-Nicolson scheme with ${\Delta T}=0.1$ and ${\delta t}=10^{-3}$.}
	\label{error:L2_NonzeroSource_l2_CN_coarser}
	\end{table}
	
	\begin{table}[H]
	\begin{center}
	\begin{tabular}{|c|c|c|c|c|c|c|}
	\hline
	$T^n$ & $\text{Rel}^{\text{EW}}_{H_{\kappa}^1} (T^n)$&$\text{Rel}^0_{H_{\kappa}^1}(T^n)$& $\text{Rel}^1_{H_{\kappa}^1}(T^n)$& $\text{Rel}^2_{H_{\kappa}^1}(T^n)$& $\text{Rel}^3_{H_{\kappa}^1}(T^n)$& $\text{Rel}^4_{H_{\kappa}^1}(T^n)$
	\\ \hline
	 0.1 & 6.9448      &16.5638 & 6.9448 & 6.9448 & 6.9448 & 6.9448 \\
 0.2 &  5.6381     &25.0878 & 6.7485 & 5.6383 & 5.6383 & 5.6383 \\
 0.3 &  4.9062     &27.6524 & 9.3773 & 5.0384 & 4.9062 & 4.9062 \\
 0.4 &  4.7218     &16.6013 & 6.2318 & 5.2453 & 4.7328 & 4.7218 \\
 0.5 &  4.8979     &9.3734 & 5.6700 & 4.9104 & 4.9386 & 4.8996 \\
 0.6 &   5.3108    &7.9758 & 5.3592 & 5.3401 & 5.3153 & 5.3126 \\
 0.7 &   5.3062    &6.9107 & 5.3156 & 5.3121 & 5.3064 & 5.3069 \\
 0.8 &   6.2664    &7.6015 & 6.2824 & 6.2667 & 6.2669 & 6.2664 \\
 0.9 &   6.4269    &7.0167 & 6.4412 & 6.4274 & 6.4270 & 6.4269 \\
 1.0 &   4.9341    &5.2526 & 4.9369 & 4.9343 & 4.9340 & 4.9341 \\
	\hline
	\end{tabular}
	\end{center}
	\vspace{-.4cm}
	\caption{Convergence history of Algorithm \ref{algorithm:wavelet+parareal}  in relative $H_{\kappa}^1(D)  $ error for Experiment 2: Crank-Nicolson scheme with ${\Delta T}=0.1$ and ${\delta t}=10^{-3}$.}
	\label{error:H1_NonzeroSource_l2_CN_coarser}
	\end{table}

\subsubsection*{Experiment 3: backward Euler with $\frac{\Delta T}{\delta t}=10$}
We are also interested in studying how the coarse solver and fine solver affect the performance of our proposed WEMP algorithm. To this end,  we choose $\Delta T=10^{-2}$, $\delta t=10^{-3}$ and utilize backward Euler scheme in time discretization. Note that the ratio between the coarse time step and fine time step is smaller than that in Experiment 1.

 The multiscale solutions from Algorithm \ref{algorithm:wavelet} with backward Euler scheme are presented in Figure \ref{fig:EWsol_NonzeroSource_level2_BE}. We present the numerical solutions $U^n_k$ for $n=10, 30, 50, 100$ from Algorithm \ref{algorithm:wavelet+parareal} with iteration number $k=0,1,2$ in Figure \ref{fig:PararealSol_NonzeroSource_level2_Backward}.	One can observe the same convergence behavior as in Experiment 1.
\begin{figure}[H]
		\centering
		\includegraphics[trim={3cm 1.8cm 2.8cm 2cm},clip,width=0.24 \textwidth]{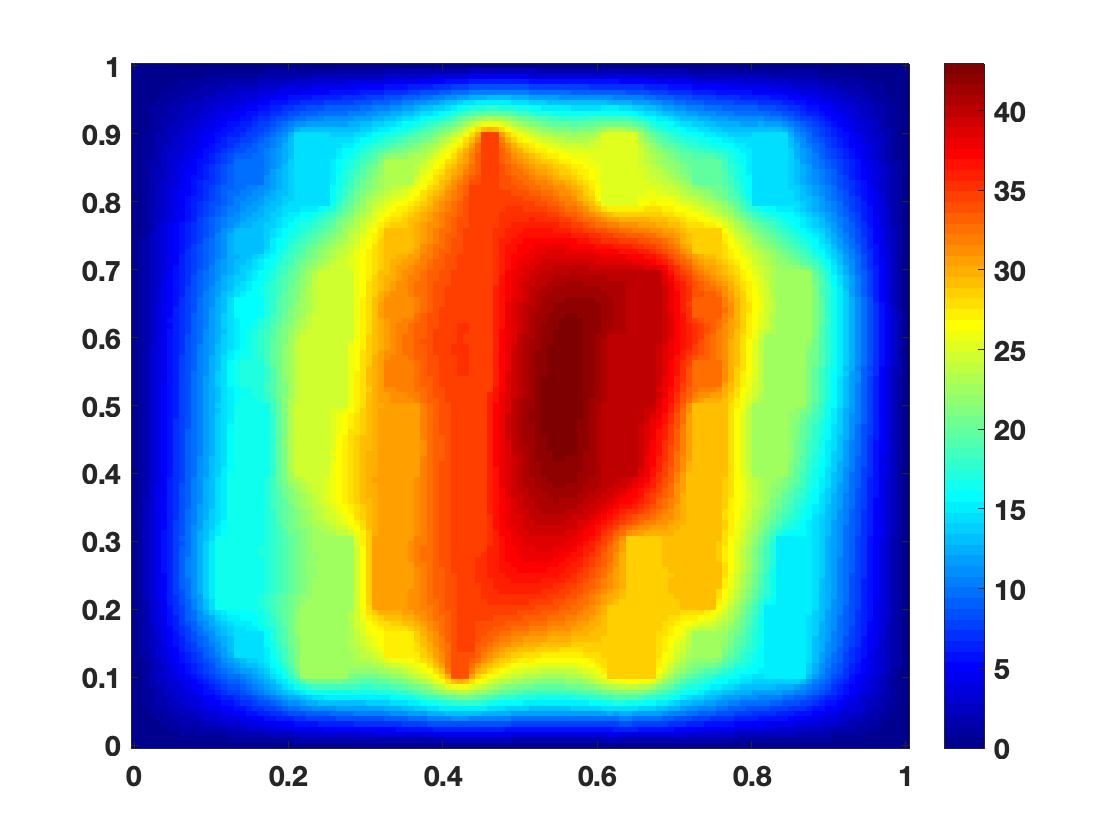}
		\includegraphics[trim={3cm 1.8cm 2.8cm 2cm},clip,width=0.24 \textwidth]{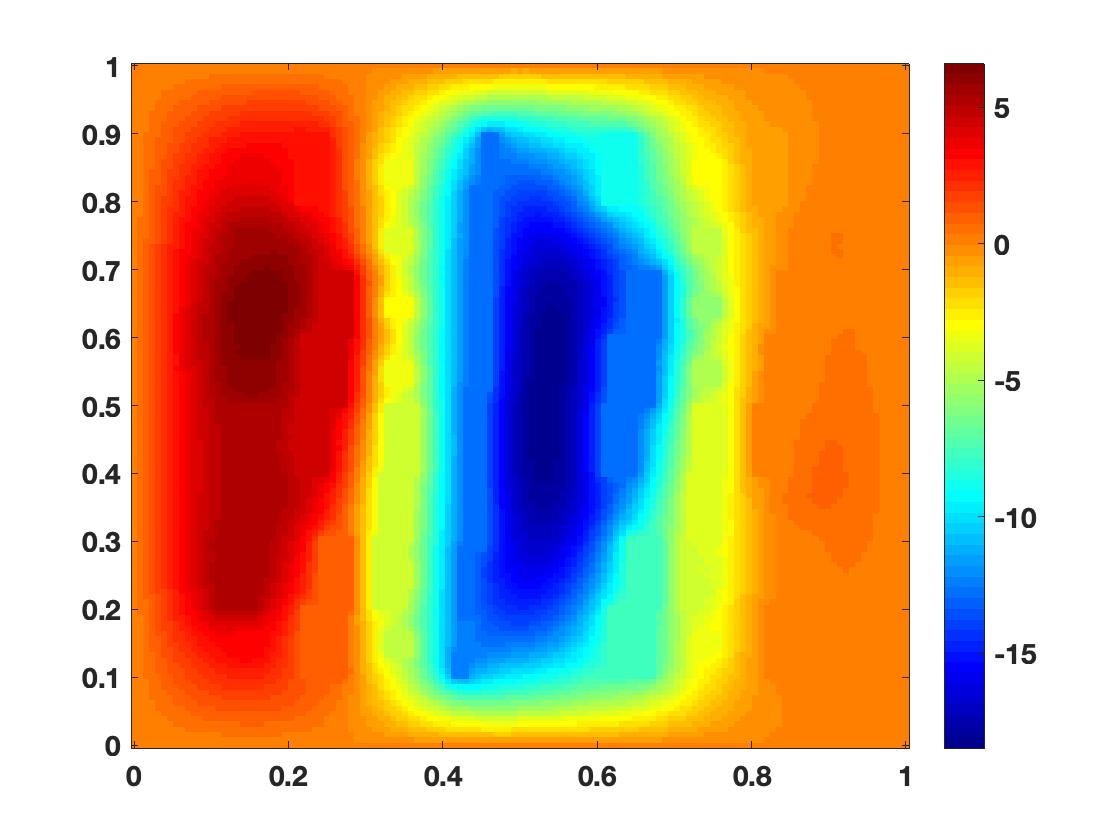}
		\includegraphics[trim={3cm 1.8cm 2.8cm 2cm},clip,width=0.24 \textwidth]{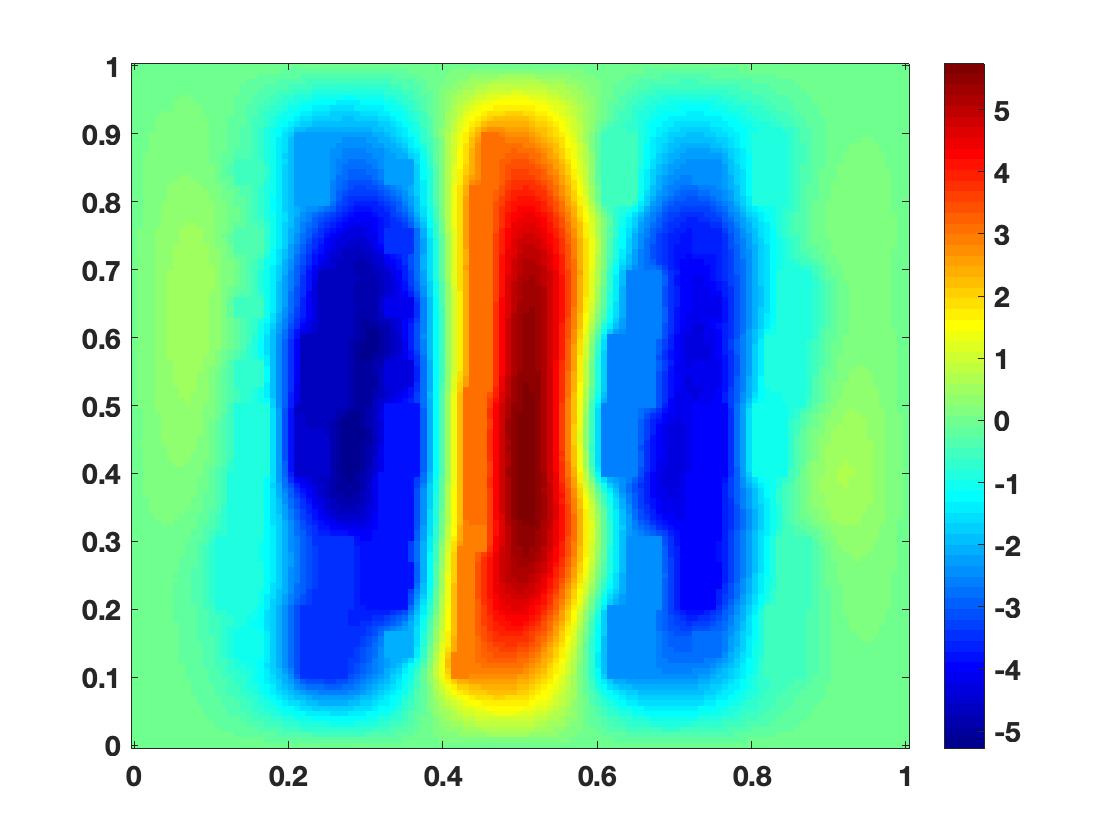}
		\includegraphics[trim={3cm 1.8cm 2.8cm 2cm},clip,width=0.24 \textwidth]{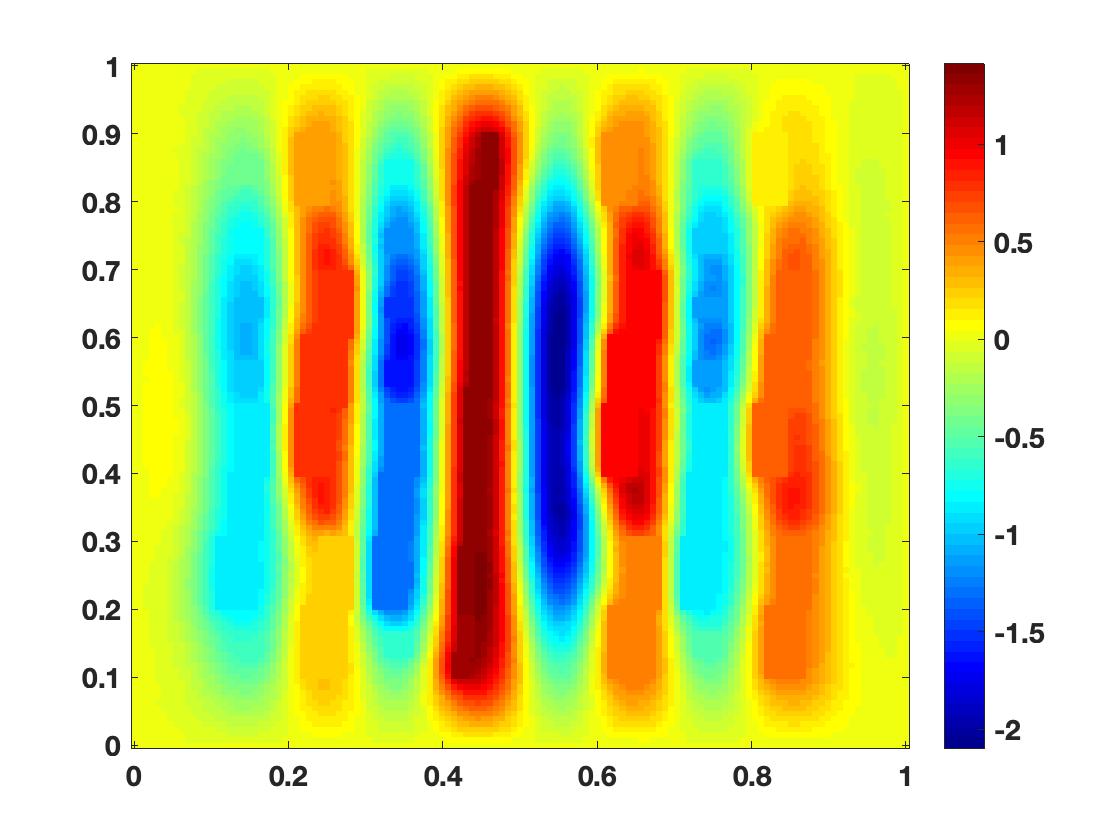}
		
		\caption{Multiscale solution from Algorithm \ref{algorithm:wavelet} with $\delta t=10^{-3}$ and $\ell=2$, backward Euler scheme: $u_{\text{ms},\ell}^{\text{EW,100}}$,  $u_{\text{ms},\ell}^{\text{EW,300}}$, $u_{\text{ms},\ell}^{\text{EW,500}}$ and $u_{\text{ms},\ell}^{\text{EW,1000}}$.}
		\label{fig:EWsol_NonzeroSource_level2_BE}
	\end{figure}	
	
	\begin{figure}[H]
		\centering
		\includegraphics[trim={3cm 1.8cm 2.8cm 2cm},clip,width=0.24 \textwidth]{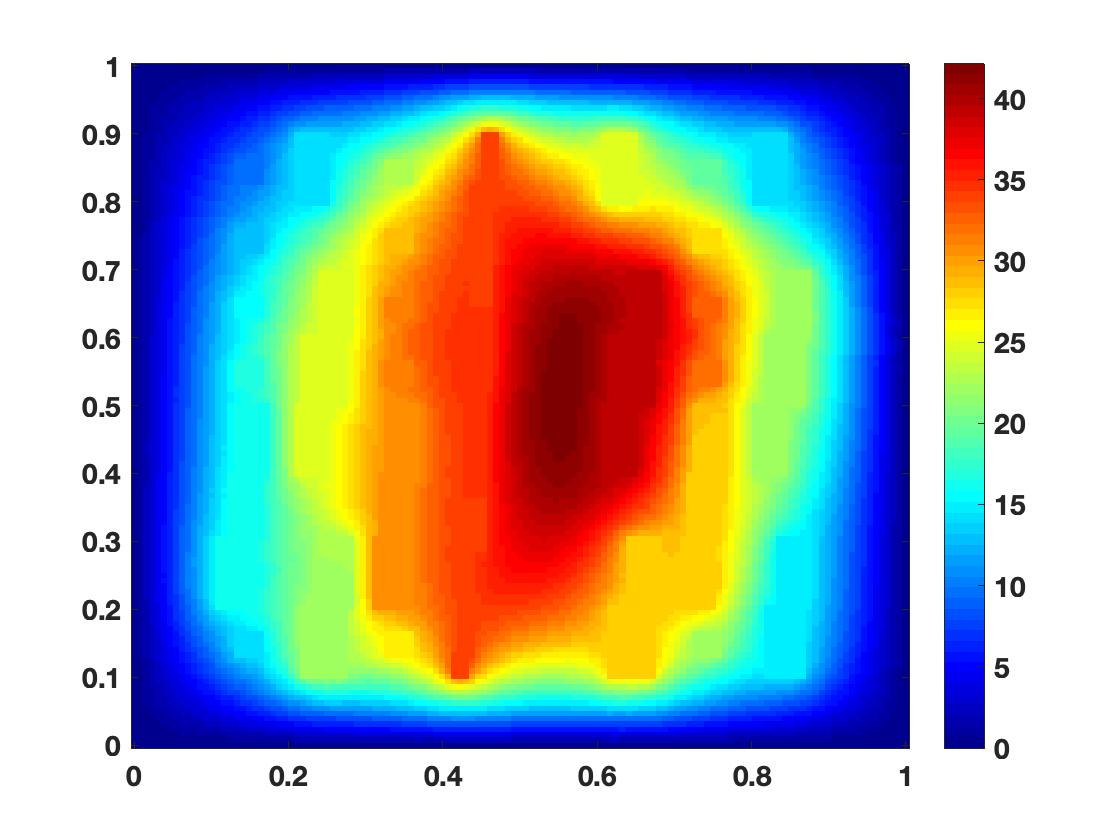}
		\includegraphics[trim={3cm 1.8cm 2.8cm 2cm},clip,width=0.24 \textwidth]{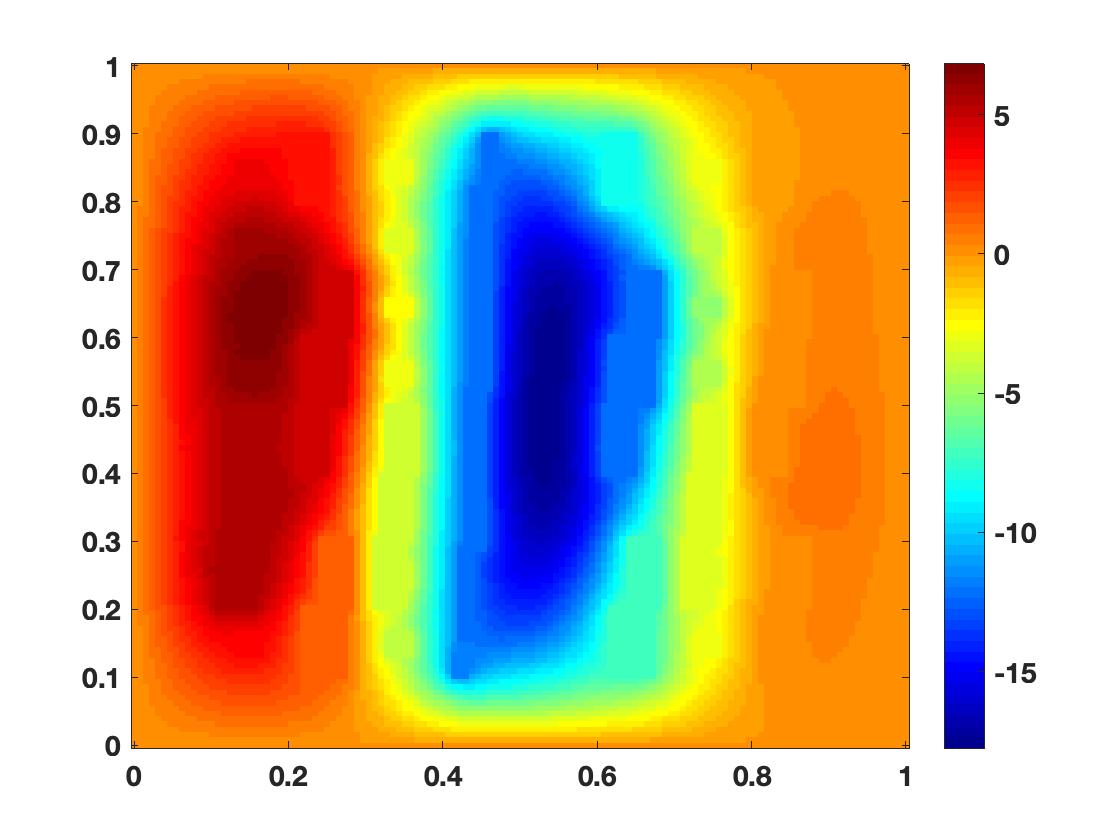}
		\includegraphics[trim={3cm 1.8cm 2.8cm 2cm},clip,width=0.24 \textwidth]{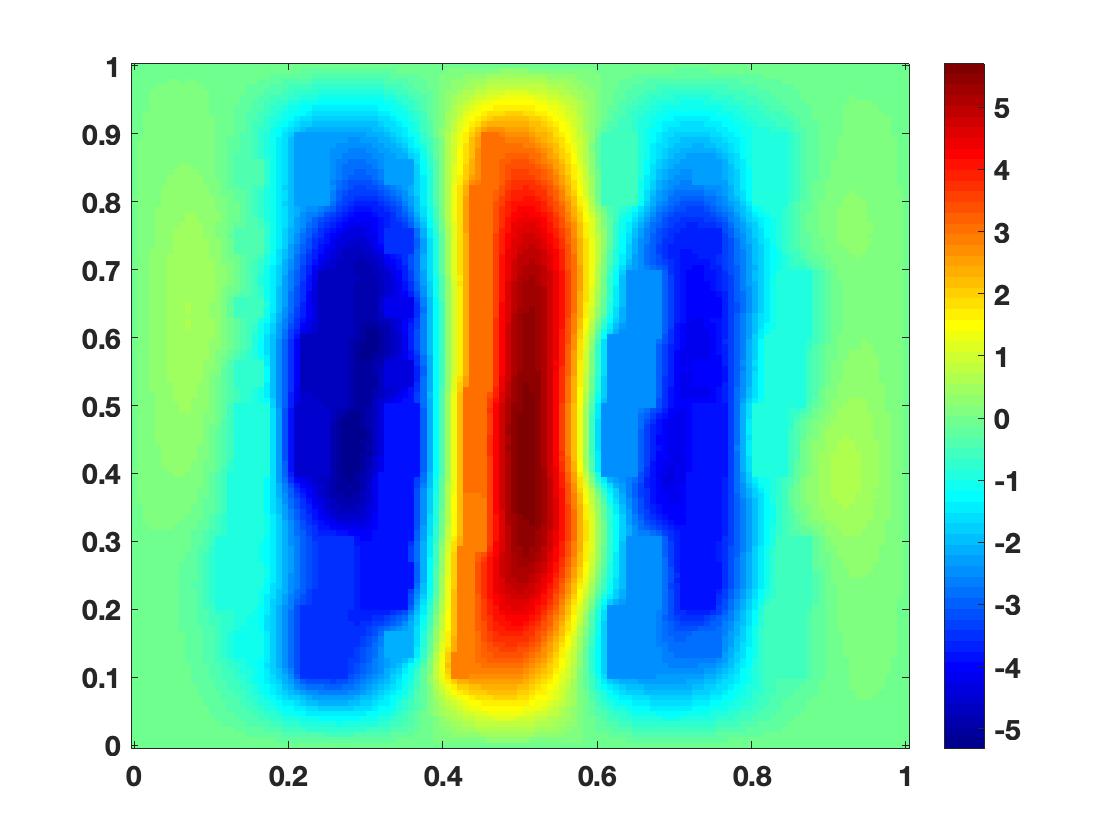}
		\includegraphics[trim={3cm 1.8cm 2.8cm 2cm},clip,width=0.24 \textwidth]{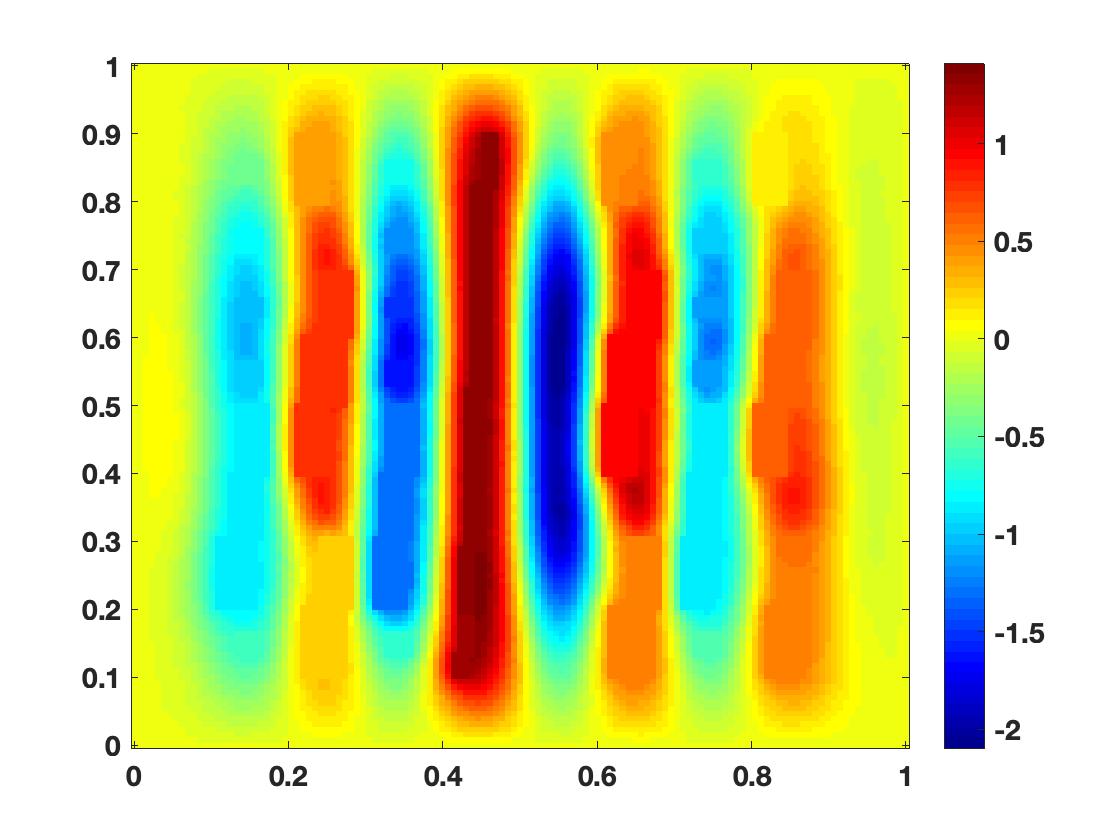}\\
		\includegraphics[trim={3cm 1.8cm 2.8cm 2cm},clip,width=0.24 \textwidth]{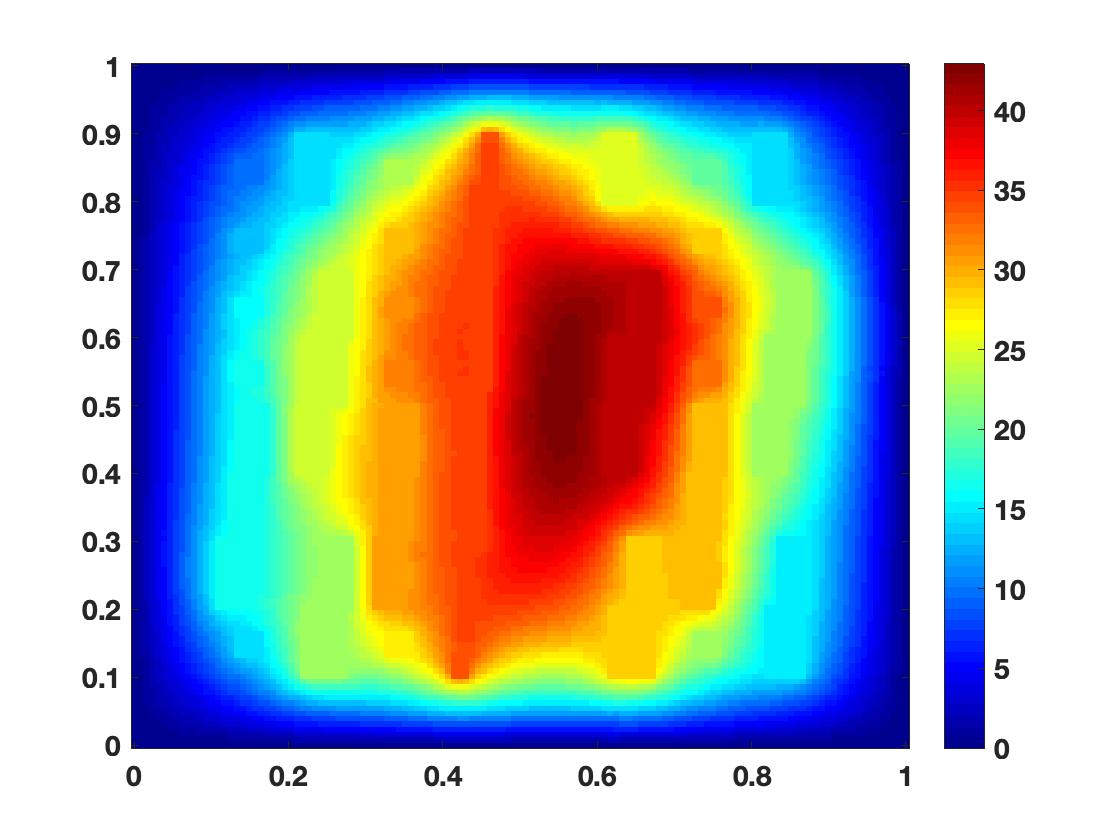}
		\includegraphics[trim={3cm 1.8cm 2.8cm 2cm},clip,width=0.24 \textwidth]{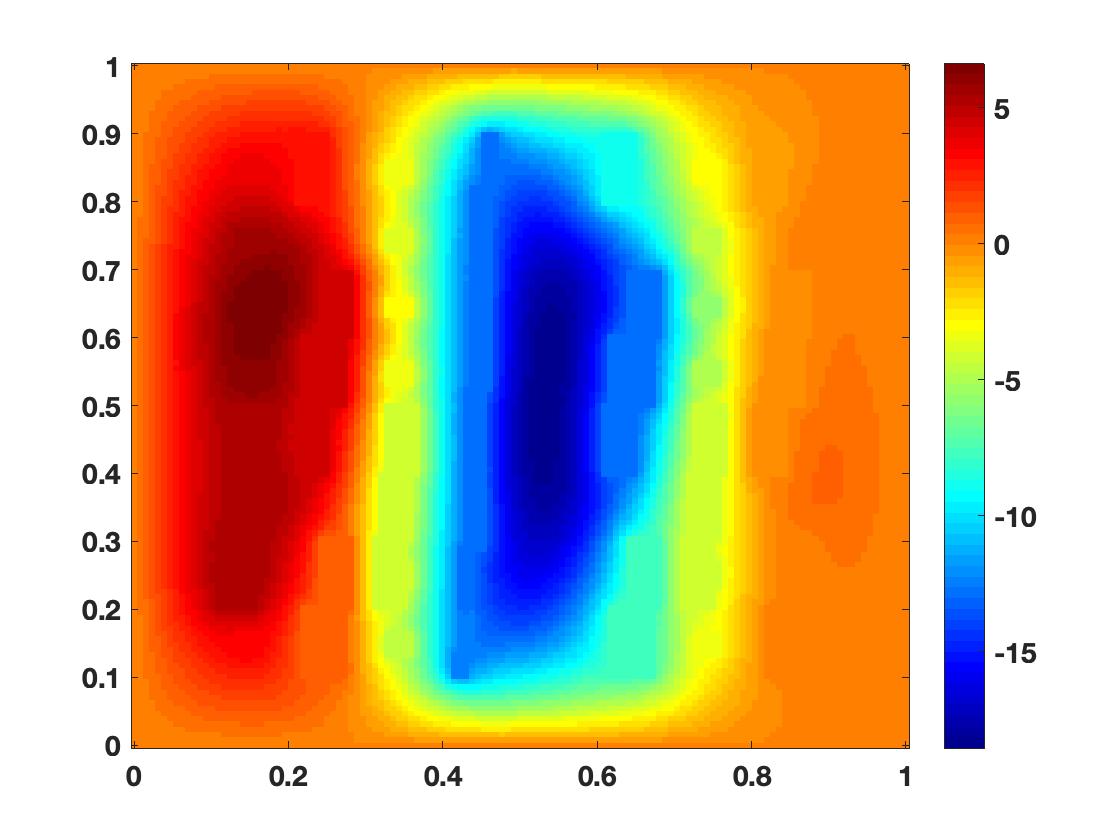}
		\includegraphics[trim={3cm 1.8cm 2.8cm 2cm},clip,width=0.24 \textwidth]{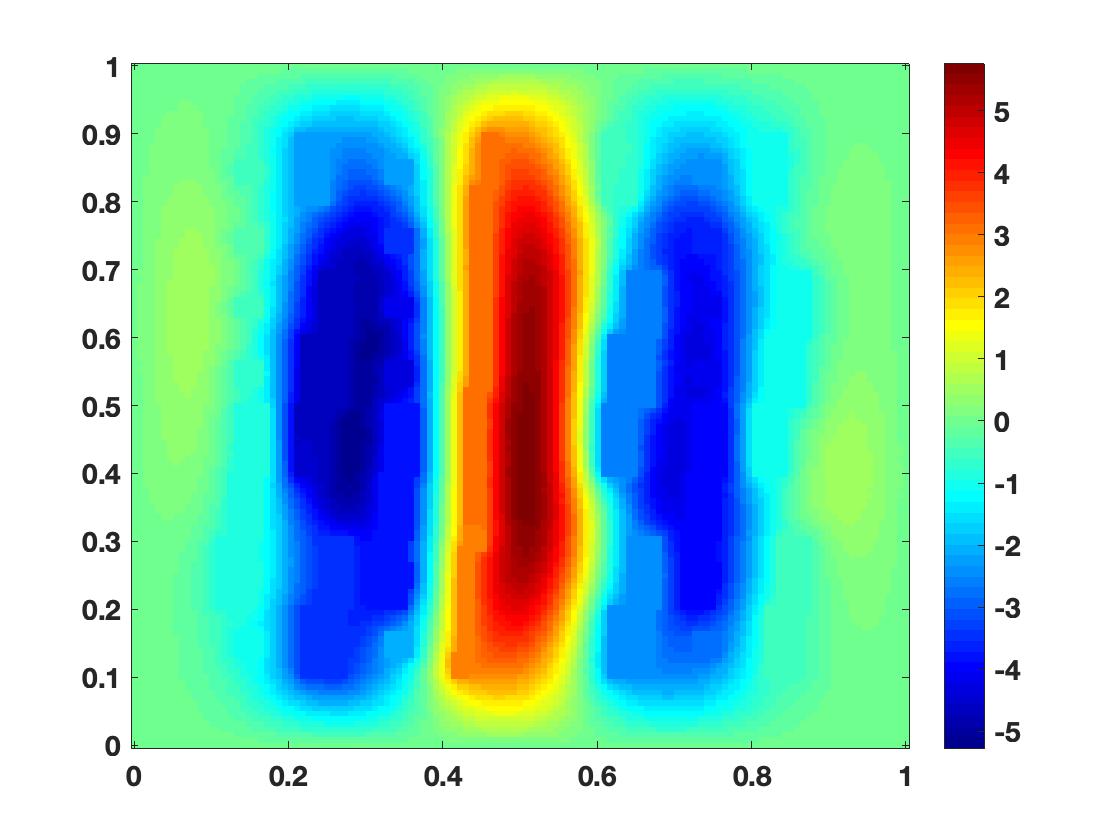}
		\includegraphics[trim={3cm 1.8cm 2.8cm 2cm},clip,width=0.24 \textwidth]{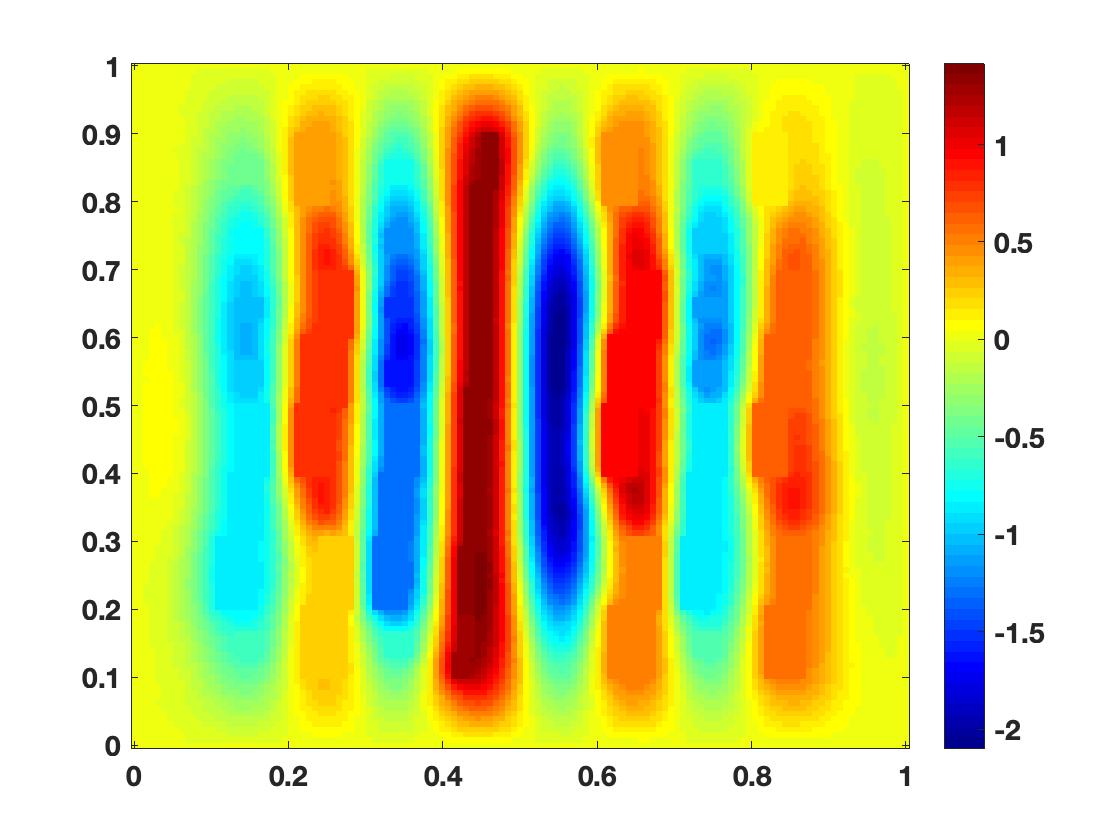}\\
		\includegraphics[trim={3cm 1.8cm 2.8cm 2cm},clip,width=0.24 \textwidth]{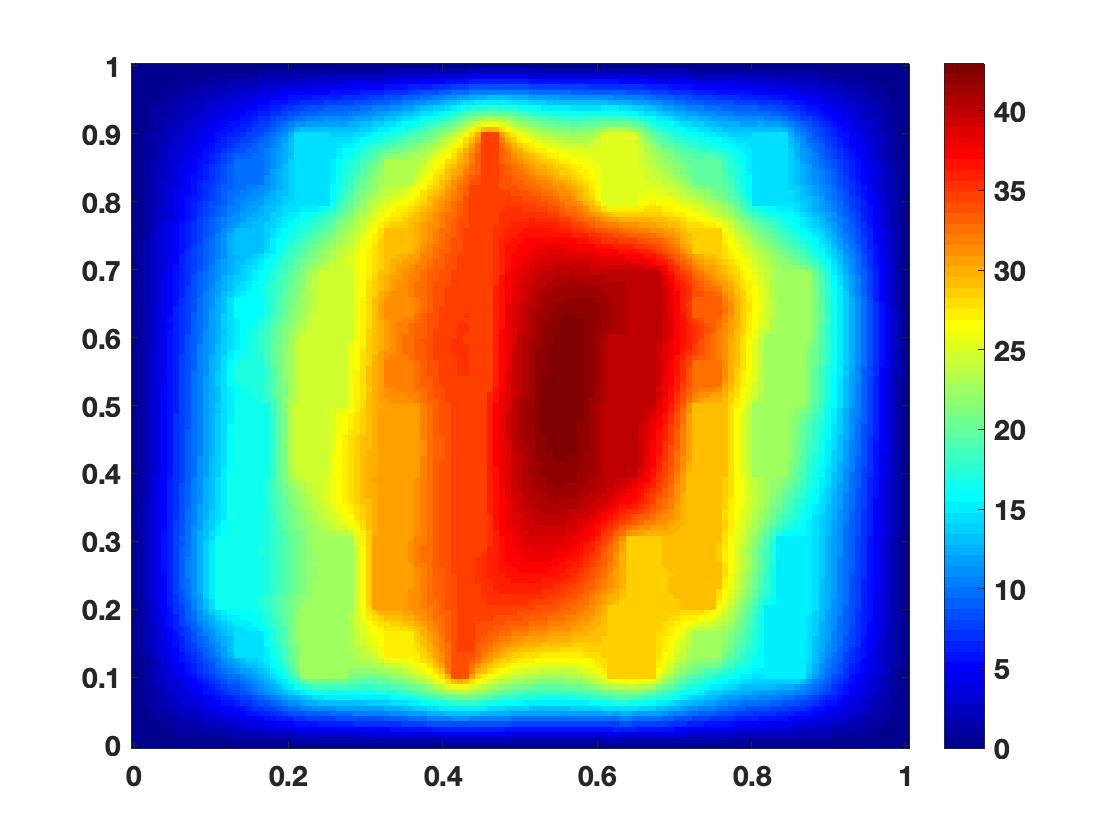}
		\includegraphics[trim={3cm 1.8cm 2.8cm 2cm},clip,width=0.24 \textwidth]{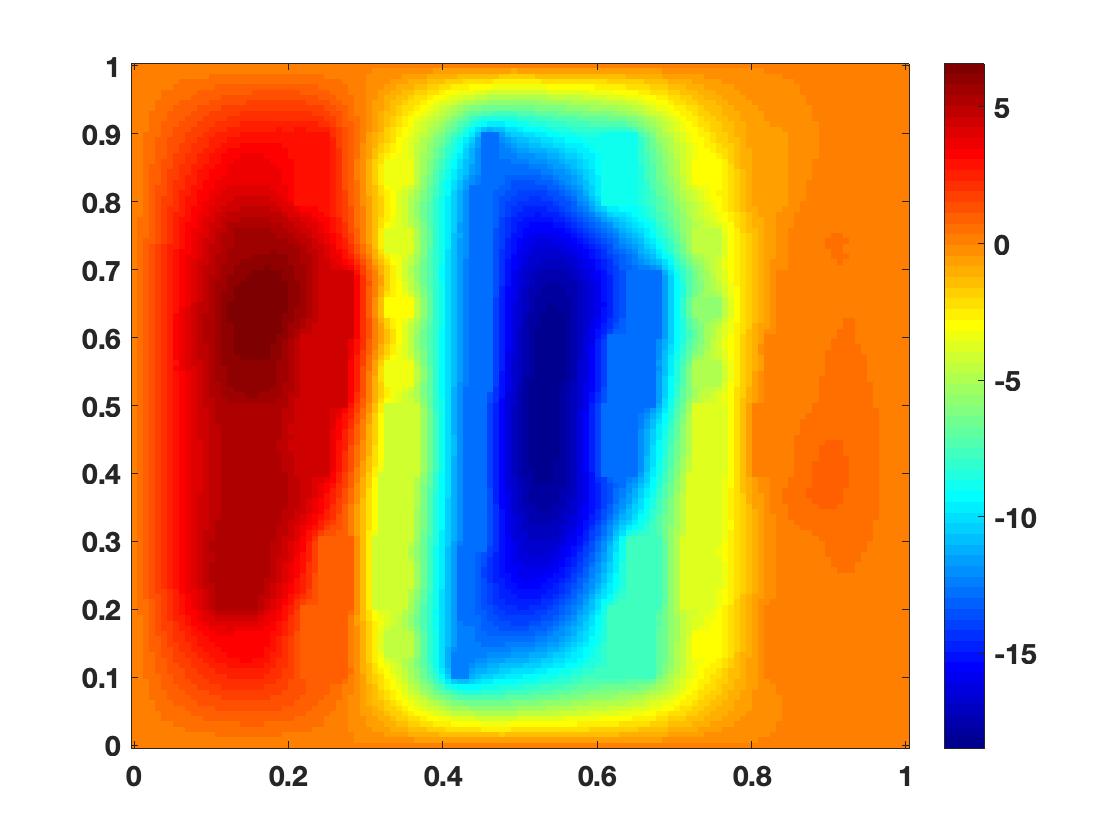}
		\includegraphics[trim={3cm 1.8cm 2.8cm 2cm},clip,width=0.24 \textwidth]{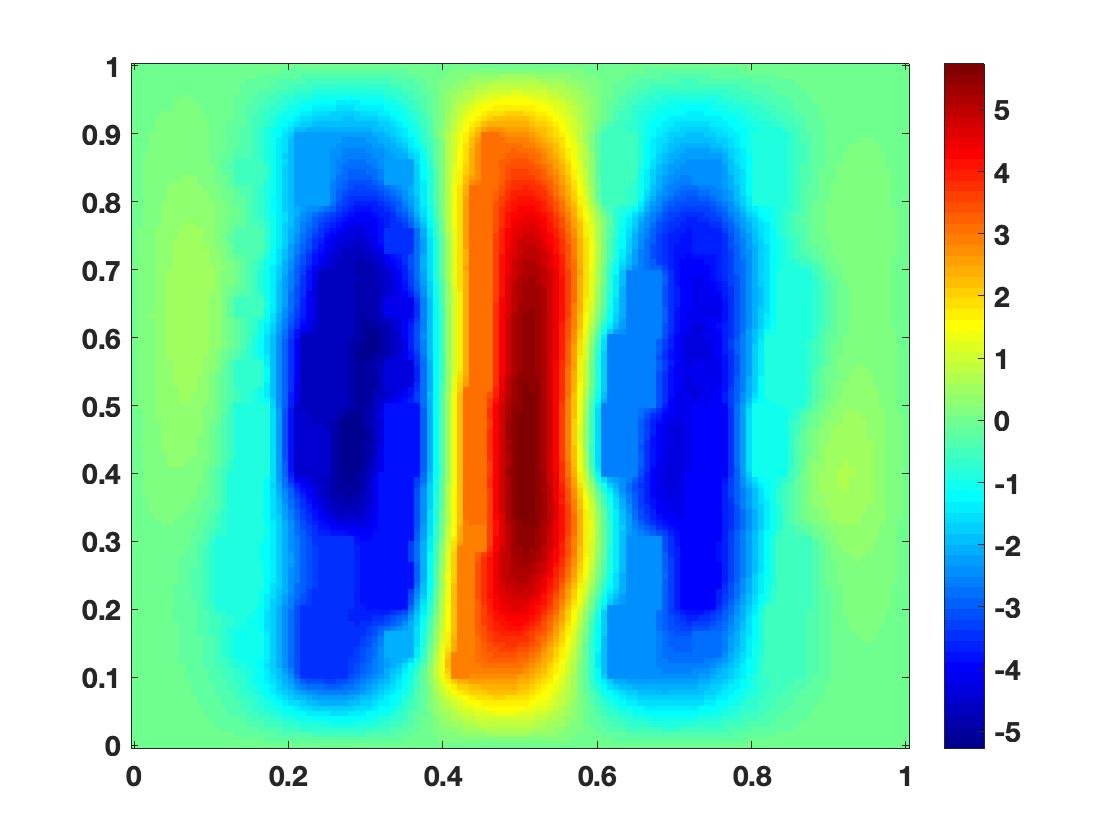}
		\includegraphics[trim={3cm 1.8cm 2.8cm 2cm},clip,width=0.24 \textwidth]{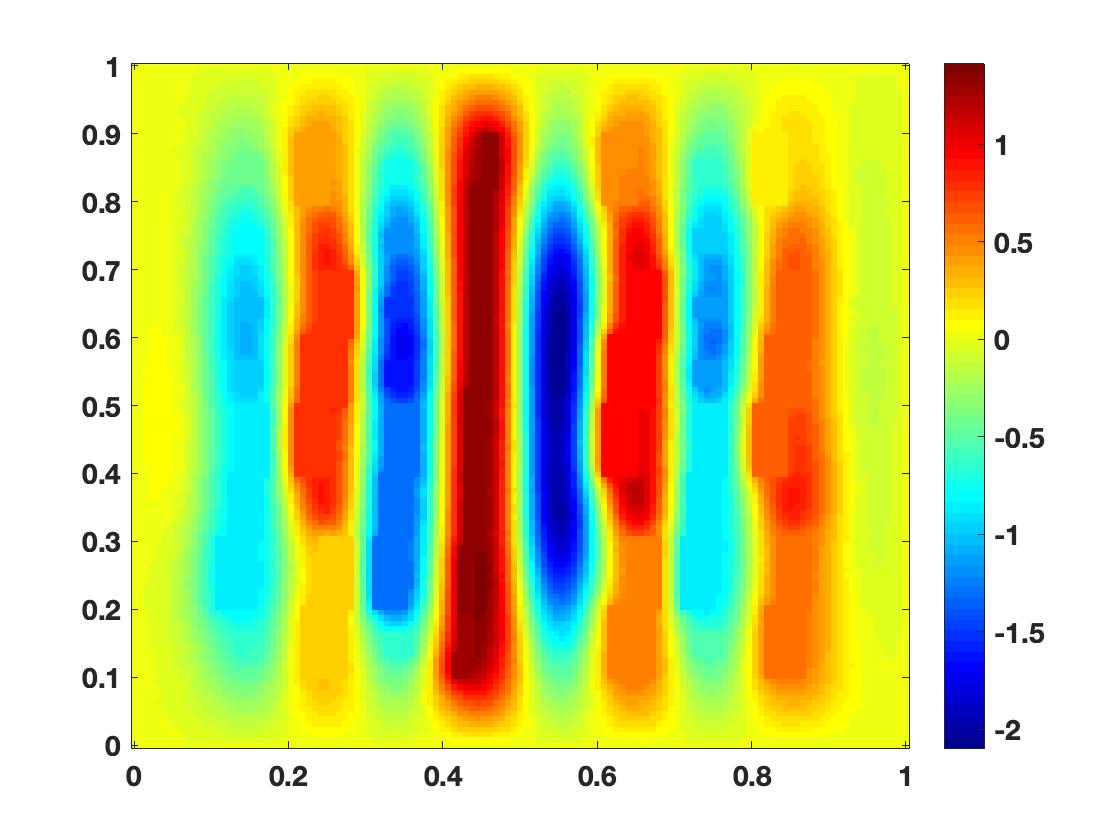}\\
		
		\caption{Numerical solutions $ U_k^n$ for $n=10, 30, 50, 100$ from Algorithm \ref{algorithm:wavelet+parareal} with $\Delta T=10^{-2}$ and $\delta t=10^{-3}$, backward Euler scheme: iteration $k=0$ (top), $k=1$ (middle) and $k=2$ (bottom).}
		\label{fig:PararealSol_NonzeroSource_level2_Backward}
	\end{figure}	
The convergence history of Algorithm \ref{algorithm:wavelet+parareal} in $L^2(D)$-norm and $H^1_{\kappa}(D)$-norm is presented in Tables \ref{error:L2_parareal_NonzeroSource_l2_BN} and \ref{error:H1_parareal_NonzeroSource_l2_BN}.	
Comparing Table \ref{error:L2_NonzeroSource_l2_BE_coarser} with Table \ref{error:L2_parareal_NonzeroSource_l2_BN}, one can see 1 iteration is sufficient for the numerical solutions from Algorithm \ref{algorithm:wavelet+parareal} to reach the same accuracy as multiscale solutions from Algorithm \ref{algorithm:wavelet} under $L^2(D)$-norm and $H^1_{\kappa}(D)$-norm when the coarse time step $\Delta T=10^{-2}$ becomes smaller. However, this involves more coarse solvers for each iteration.  Furthermore, a decreased coarse time step is only practical when sufficient processors are available. 
\begin{table}[H]
	\begin{center}
	\begin{tabular}{|c|c|c|c|c|c|c|}
	\hline
	$T^n$ & $\text{Rel}^{\text{EW}}_{L^2} (T^n)$  &$\text{Rel}^0_{L^2}(T^n)$& $\text{Rel}^1_{L^2}(T^n)$&$\text{Rel}^2_{L^2}(T^n)$& $\text{Rel}^3_{L^2}(T^n)$& $\text{Rel}^4_{L^2}(T^n)$
	\\ \hline
	0.1 & 0.5671 & 2.4095 & 0.6072 & 0.5563 & 0.5680 & 0.5670 \\
 0.2 & 0.8234 & 3.4560 & 0.4291 & 0.8527 & 0.8237 & 0.8230 \\
 0.3 & 0.8258 & 6.2194 & 0.4501 & 0.8474 & 0.8253 & 0.8258 \\
 0.4 & 0.5897 & 3.0769 & 0.5520 & 0.6288 & 0.5874 & 0.5891 \\
 0.5 & 0.5323 & 1.9183 & 0.5216 & 0.5621 & 0.5273 & 0.5332 \\
 0.6 & 0.7072 & 1.1525 & 0.6757 & 0.7204 & 0.7052 & 0.7078 \\
 0.7 & 0.7229 & 1.4333 & 0.7154 & 0.7287 & 0.7222 & 0.7229 \\
 0.8 & 0.9680 & 1.9263 & 0.9493 & 0.9720 & 0.9680 & 0.9679 \\
 0.9 & 1.0681 & 1.6490 & 1.0817 & 1.0673 & 1.0684 & 1.0681 \\
 1.0 & 0.9145 & 1.1618 & 0.9204 & 0.9146 & 0.9145 & 0.9146 \\
	\hline
	\end{tabular}
	\end{center}
	\vspace{-.4cm}
	\caption{Convergence history of Algorithm \ref{algorithm:wavelet+parareal} in relative $L^2(D)$ error for Experiment 3: backward Euler scheme with ${\Delta T}=10^{-2}$ and ${\delta t}=10^{-3}$.}
	\label{error:L2_parareal_NonzeroSource_l2_BN}
	\end{table}

	\begin{table}[H]
	\begin{center}
	\begin{tabular}{|c|c|c|c|c|c|c|}
	\hline
	$T^n$ & $\text{Rel}^{\text{EW}}_{H_{\kappa}^1} (T^n)$&$\text{Rel}^0_{H_{\kappa}^1}(T^n)$& $\text{Rel}^1_{H_{\kappa}^1}(T^n)$& $\text{Rel}^2_{H_{\kappa}^1}(T^n)$& $\text{Rel}^3_{H_{\kappa}^1}(T^n)$& $\text{Rel}^4_{H_{\kappa}^1}(T^n)$
	\\ \hline
	0.1 & 6.9437 & 7.2482 & 6.9435 & 6.9438 & 6.9437 & 6.9437 \\
 0.2 & 5.6489 & 6.2363 & 5.6351 & 5.6509 & 5.6488 & 5.6489 \\
 0.3 & 4.9158 & 6.1653 & 4.9083 & 4.9177 & 4.9157 & 4.9158 \\
 0.4 & 4.7240 & 5.0851 & 4.7270 & 4.7260 & 4.7239 & 4.7240 \\
 0.5 & 4.8984 & 5.0041 & 4.8988 & 4.8993 & 4.8984 & 4.8984 \\
 0.6 & 5.3109 & 5.3409 & 5.3116 & 5.3114 & 5.3108 & 5.3109 \\
 0.7 & 5.3064 & 5.3324 & 5.3066 & 5.3066 & 5.3064 & 5.3064 \\
 0.8 & 6.2666 & 6.2916 & 6.2667 & 6.2667 & 6.2666 & 6.2666 \\
 0.9 & 6.4270 & 6.4420 & 6.4270 & 6.4270 & 6.4270 & 6.4270 \\
 1.0 & 4.9341 & 4.9390 & 4.9341 & 4.9341 & 4.9341 & 4.9341 \\
	\hline
	\end{tabular}
	\end{center}
	\vspace{-.4cm}
	\caption{Convergence history of Algorithm \ref{algorithm:wavelet+parareal} in relative $H_{\kappa}^1(D)$ error for Experiment 3: backward Euler scheme with ${\Delta T}=10^{-2}$ and ${\delta t}=10^{-3}$.}
	\label{error:H1_parareal_NonzeroSource_l2_BN}
	\end{table}
\subsection{Numerical tests with zero source term}	\label{subsection: Numerical tests with zero source term}
In this section, we test the performance of Algorithm \ref{algorithm:wavelet+parareal} for Problem (\ref{eqn:pde}) with backward Euler scheme and Crank-Nicolson scheme. The source term $f:=0$. Consequently, the solution decays rapidly to 0. To generate solutions with reasonable size, we set the final time $T=0.1$, the coarse time step $\Delta T:=10^{-2}$ and the fine time step $\delta t=10^{-3}$. The initial data and permeability are the same as in the previous section.  We use backward Euler scheme with time step $10^{-3}$ to solve for the reference solutions $u^n_h$. The reference solutions $u^n_h$ for $n=10,30,50,100$ are plotted in Figure \ref{fig:fine_solution_ZeroSource}. 
	\begin{figure}[H]
		\centering
		\includegraphics[trim={3cm 1.8cm 2.8cm 2cm},clip,width=0.24 \textwidth]{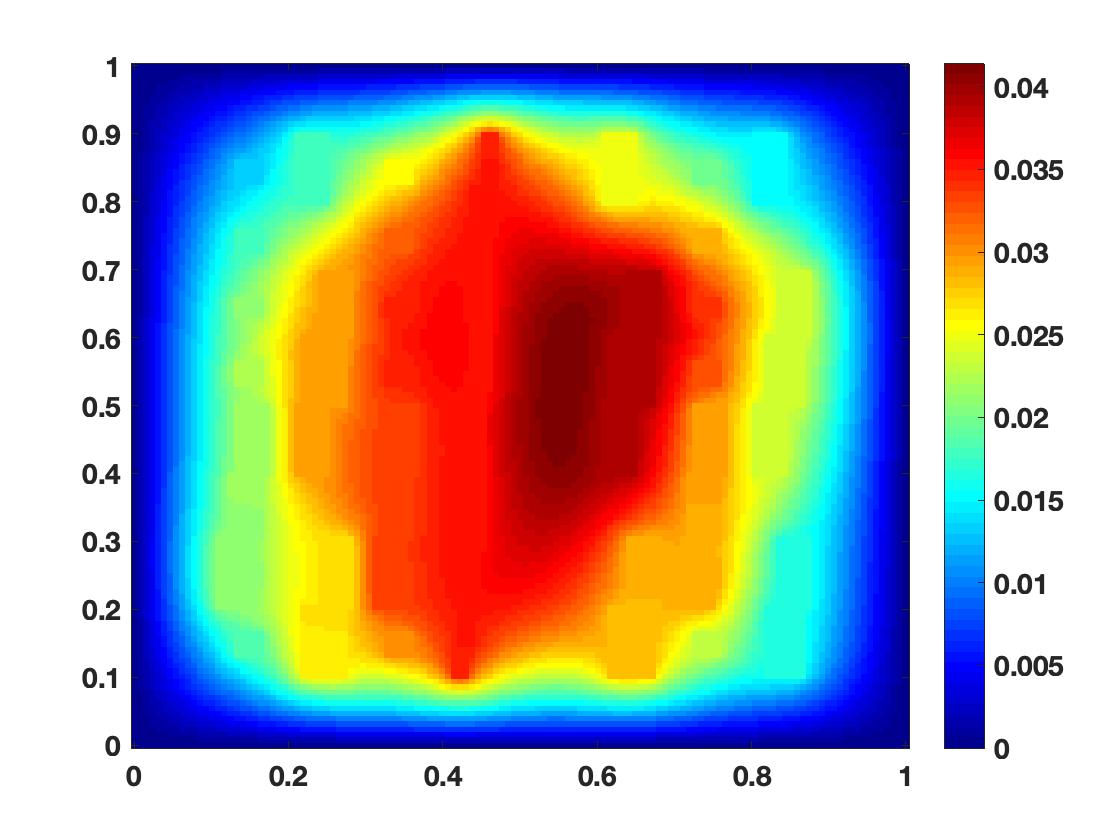}
		\includegraphics[trim={3cm 1.8cm 2.8cm 2cm},clip,width=0.24 \textwidth]{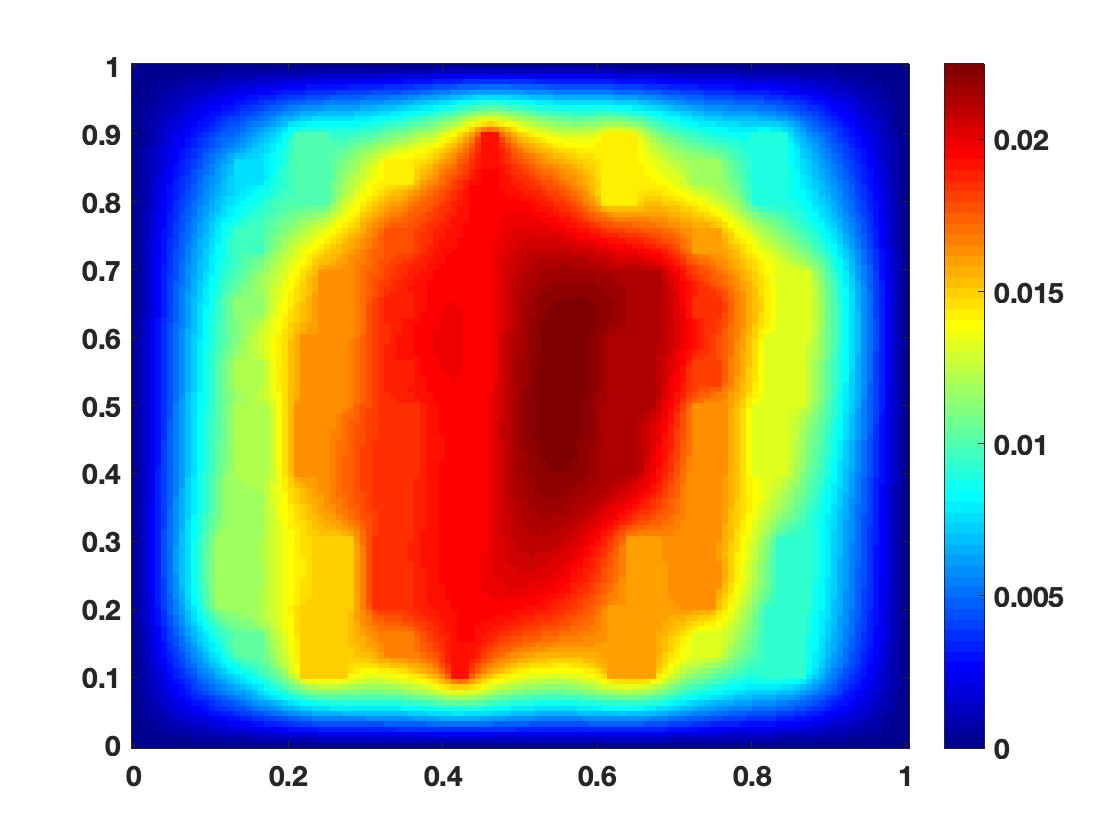}
		\includegraphics[trim={3cm 1.8cm 2.8cm 2cm},clip,width=0.24 \textwidth]{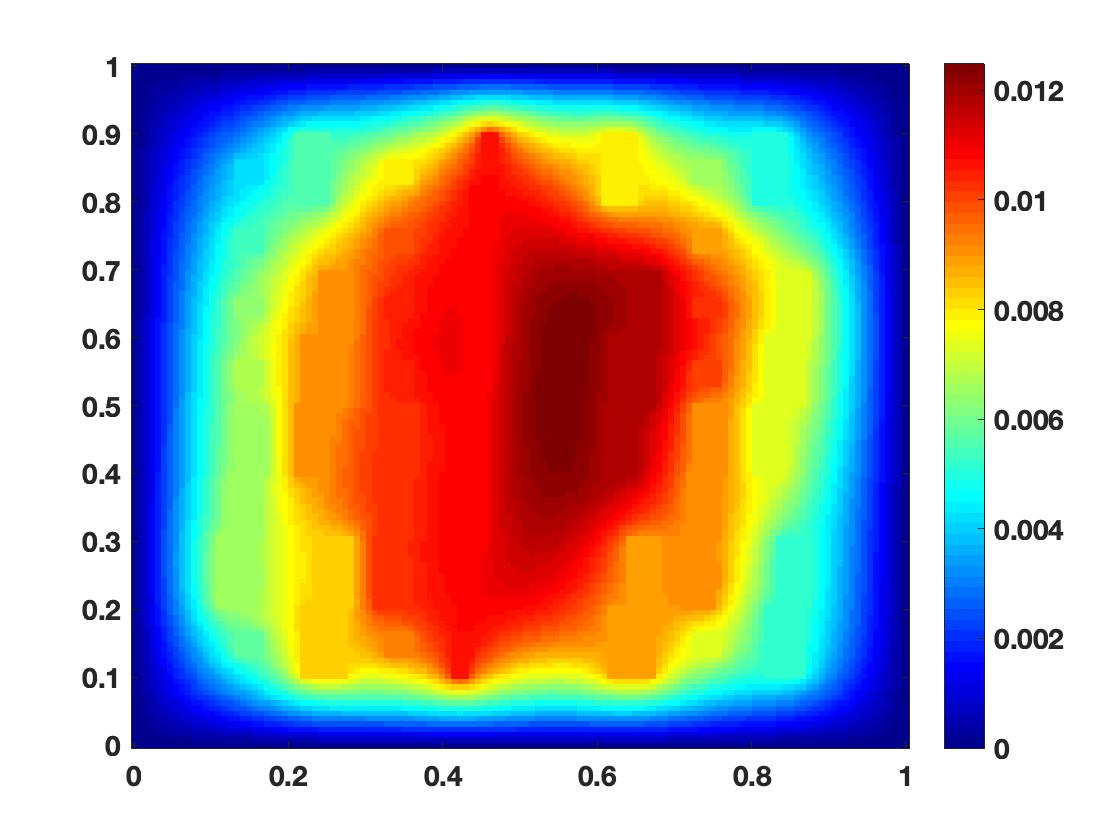}
		\includegraphics[trim={3cm 1.8cm 2.8cm 2cm},clip,width=0.24 \textwidth]{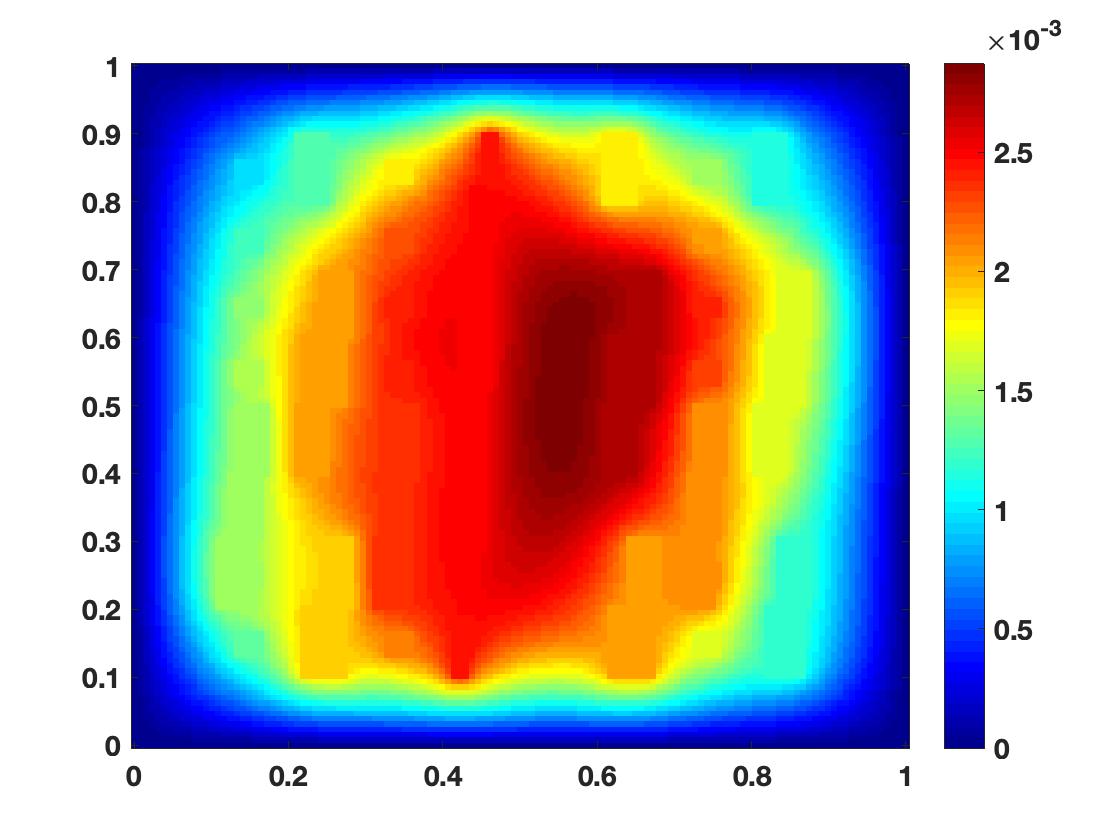}
		\caption{Numerical solutions $u^n_h$ to (\ref{eqn:weakform_h}) with $f=0$ for $n=10,30,50$ and $100$ with $\delta t=10^{-3}$.}\label{fig:fine_solution_ZeroSource}
	\end{figure}
The multiscale solutions $u_{\text{ms},\ell}^{\text{EW,n}}$ for $n=10,30,50,100$ from Algorithm \ref{algorithm:wavelet} with backward Euler scheme and time step size $\delta t=10^{-3}$ are presented in Figure \ref{fig:WEMsFEM_solution_ZeroSource}. We present the numerical solutions $U^n_k$ for $n=1,3,5,10$ from Algorithm \ref{algorithm:wavelet+parareal} with iteration number $k=0,1,2$ in Figure \ref{fig:PararealSol_ZeroSource_level2}. One can observe $U^n_k$ converges to $u_{\text{ms},\ell}^{\text{EW,n}}$ as the iteration number $k$ increases.
\begin{figure}[H]
		\centering
		\includegraphics[trim={3cm 1.8cm 2.8cm 2cm},clip,width=0.24 \textwidth]{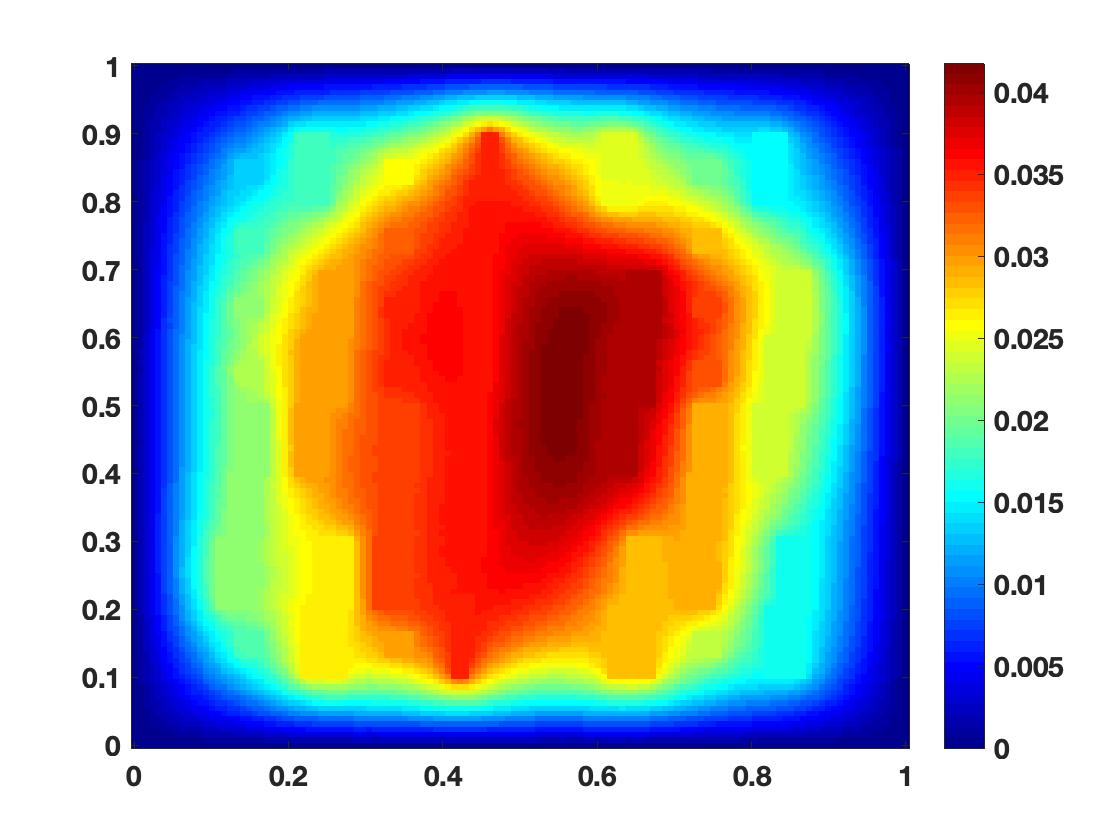}
		\includegraphics[trim={3cm 1.8cm 2.8cm 2cm},clip,width=0.24 \textwidth]{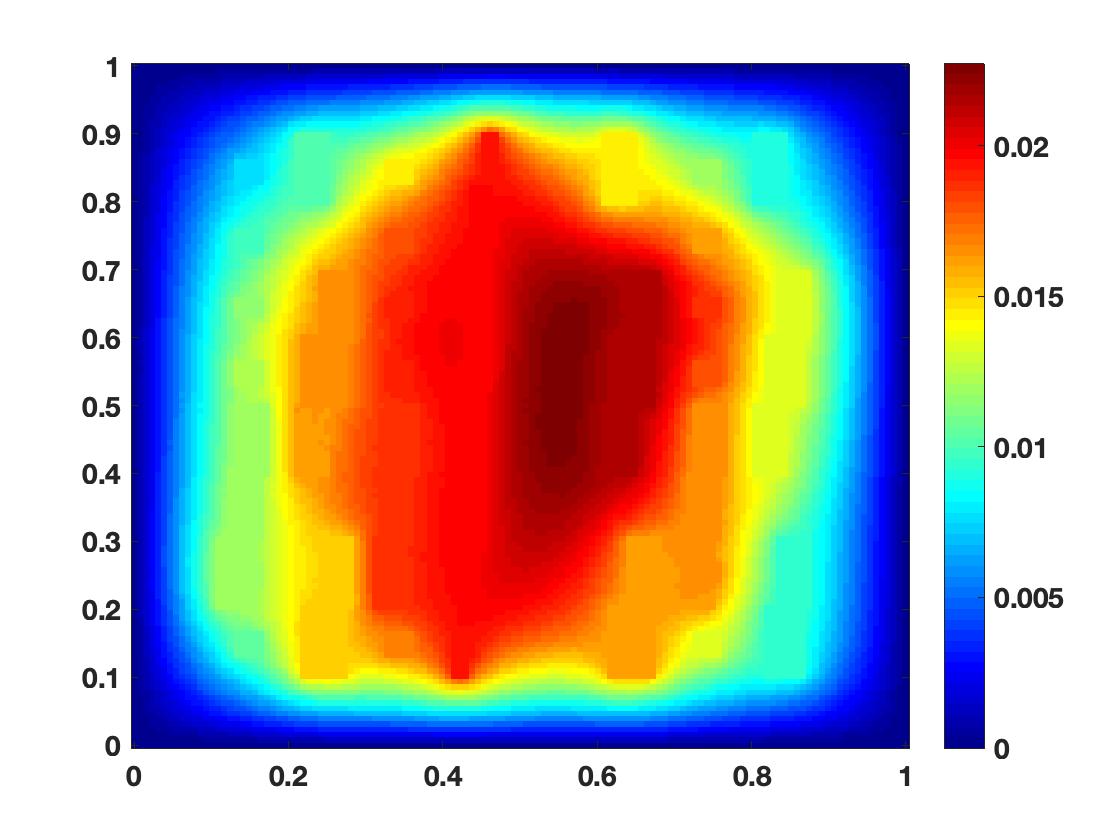}
		\includegraphics[trim={3cm 1.8cm 2.8cm 2cm},clip,width=0.24 \textwidth]{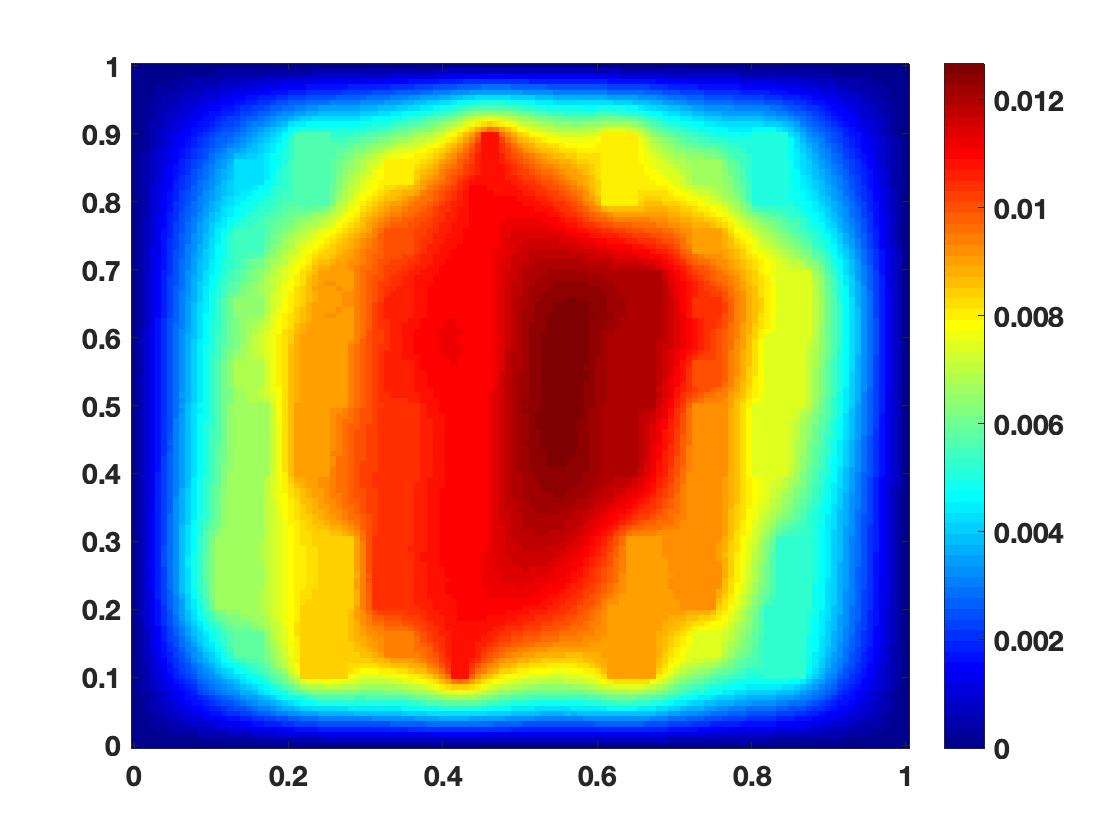}
		\includegraphics[trim={3cm 1.8cm 2.8cm 2cm},clip,width=0.24 \textwidth]{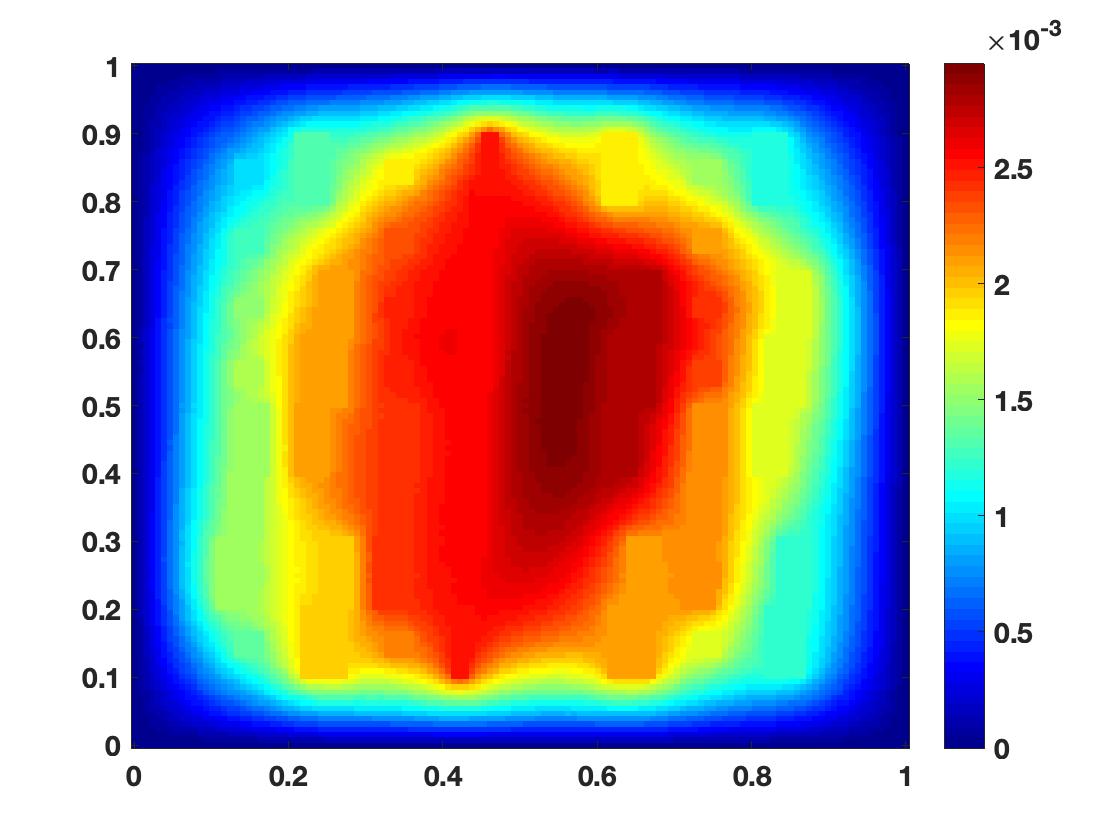}
		\caption{Multiscale solutions from Algorithm \ref{algorithm:wavelet} with $\delta t=10^{-3}$ and $\ell=2$, backward Euler scheme: $u_{\text{ms},\ell}^{\text{EW,10}}$, $u_{\text{ms},\ell}^{\text{EW,30}}$,  $u_{\text{ms},\ell}^{\text{EW,50}}$ and $u_{\text{ms},\ell}^{\text{EW,100}}$. }
		\label{fig:WEMsFEM_solution_ZeroSource}
	\end{figure}
	\begin{figure}[H]
		\centering
		\includegraphics[trim={3cm 1.8cm 2.8cm 2cm},clip,width=0.24 \textwidth]{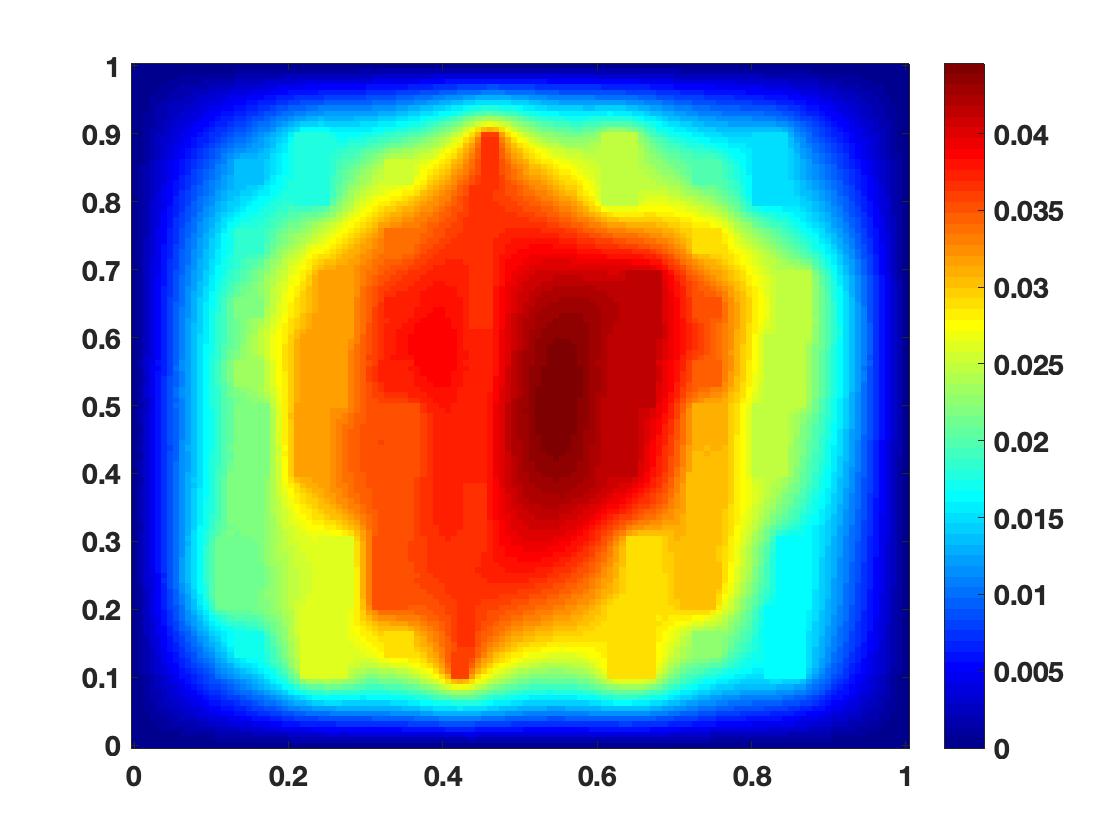}
		\includegraphics[trim={3cm 1.8cm 2.8cm 2cm},clip,width=0.24 \textwidth]{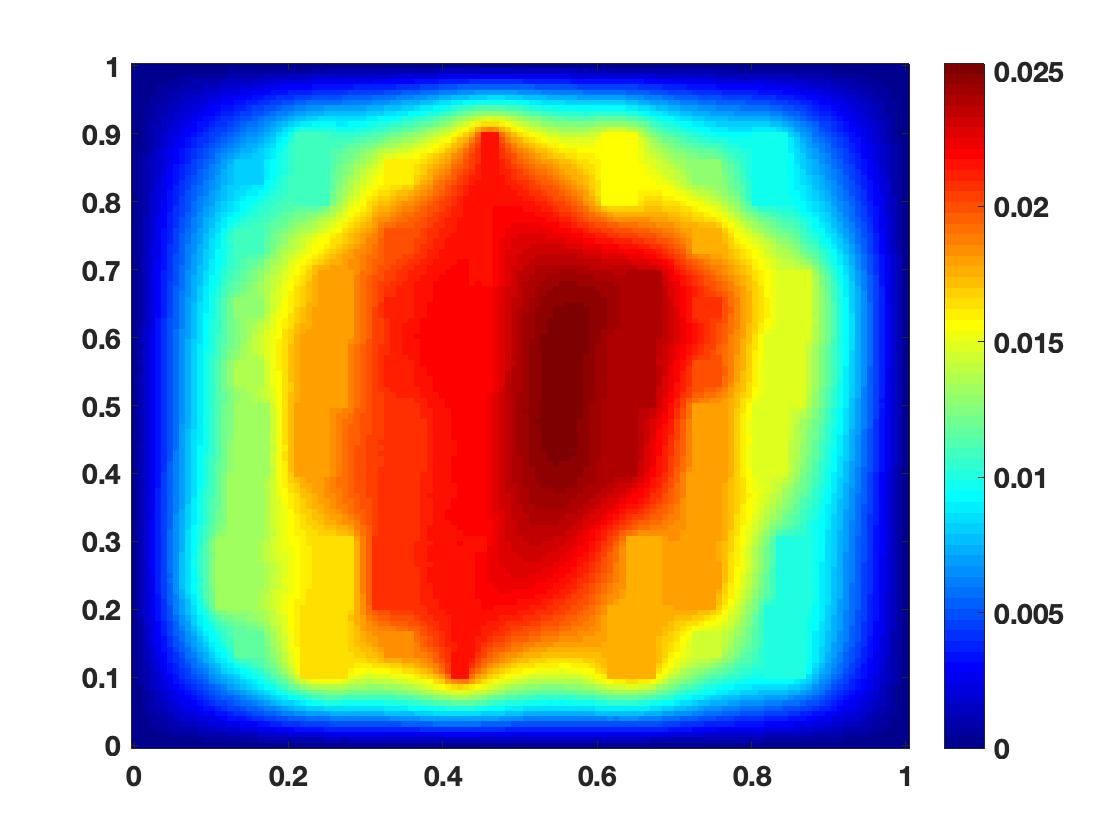}
		\includegraphics[trim={3cm 1.8cm 2.8cm 2cm},clip,width=0.24 \textwidth]{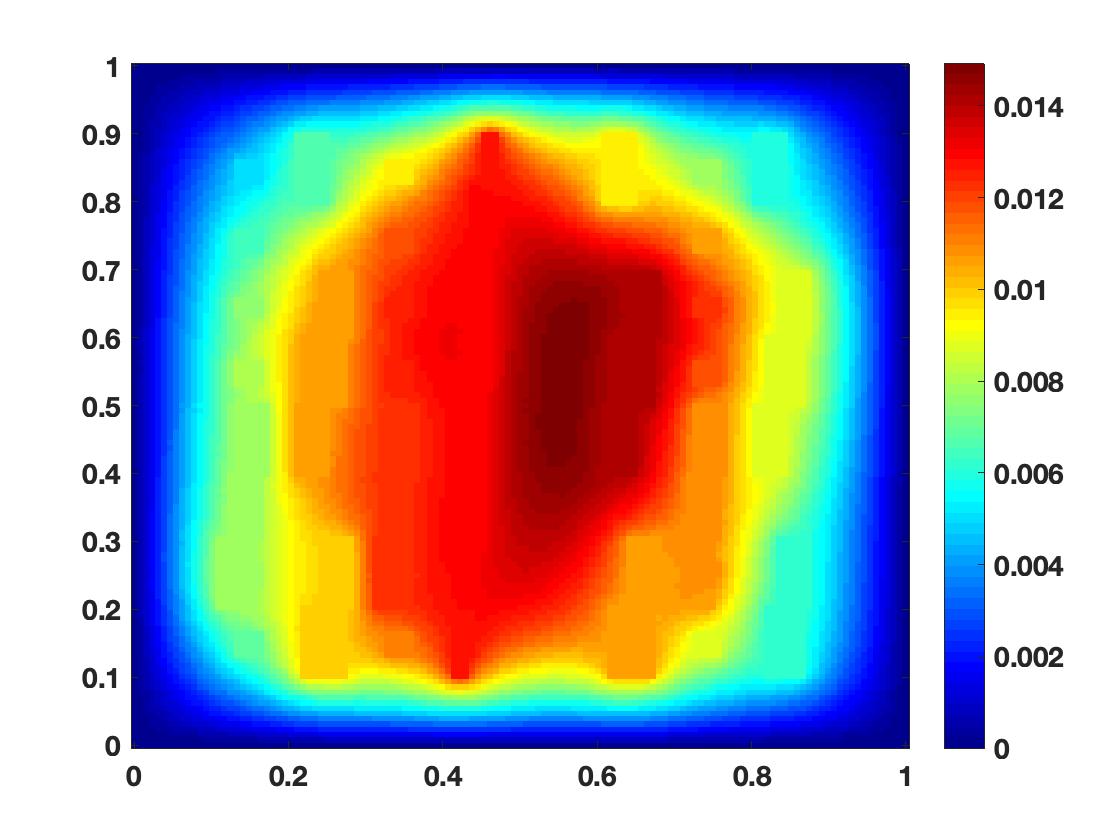}
		\includegraphics[trim={3cm 1.8cm 2.8cm 2cm},clip,width=0.24 \textwidth]{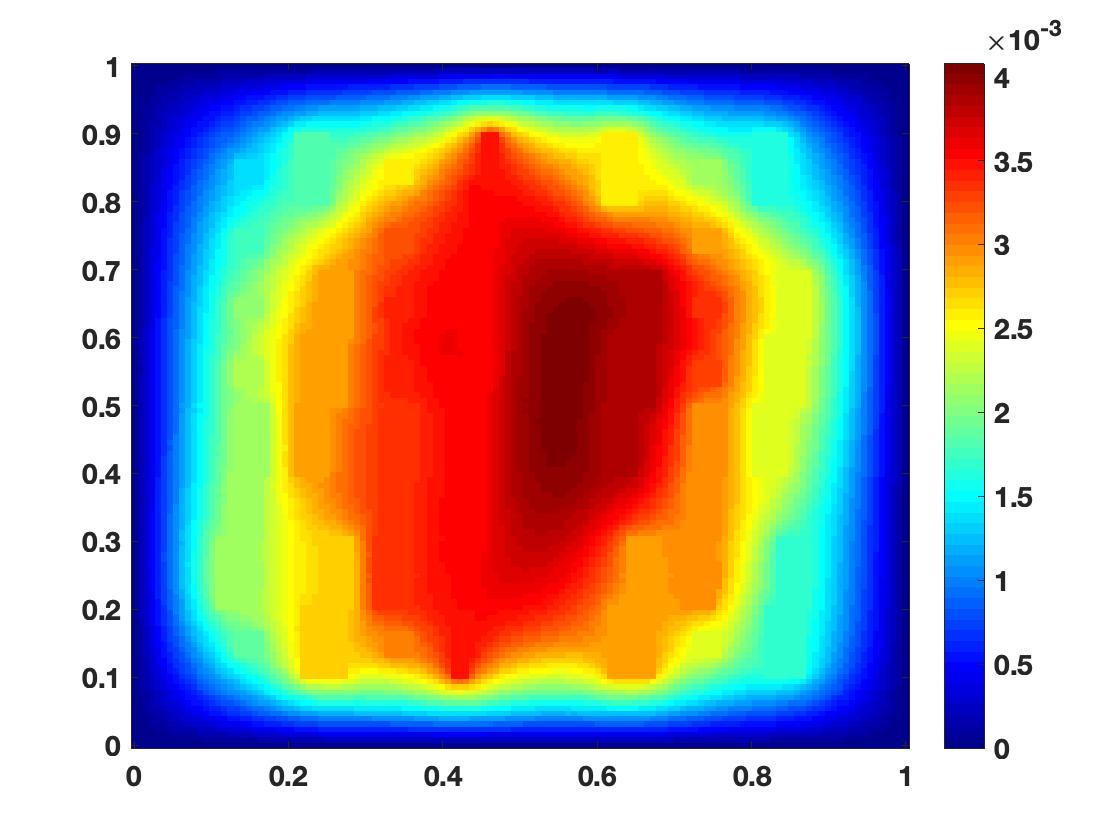}\\
		\includegraphics[trim={3cm 1.8cm 2.8cm 2cm},clip,width=0.24 \textwidth]{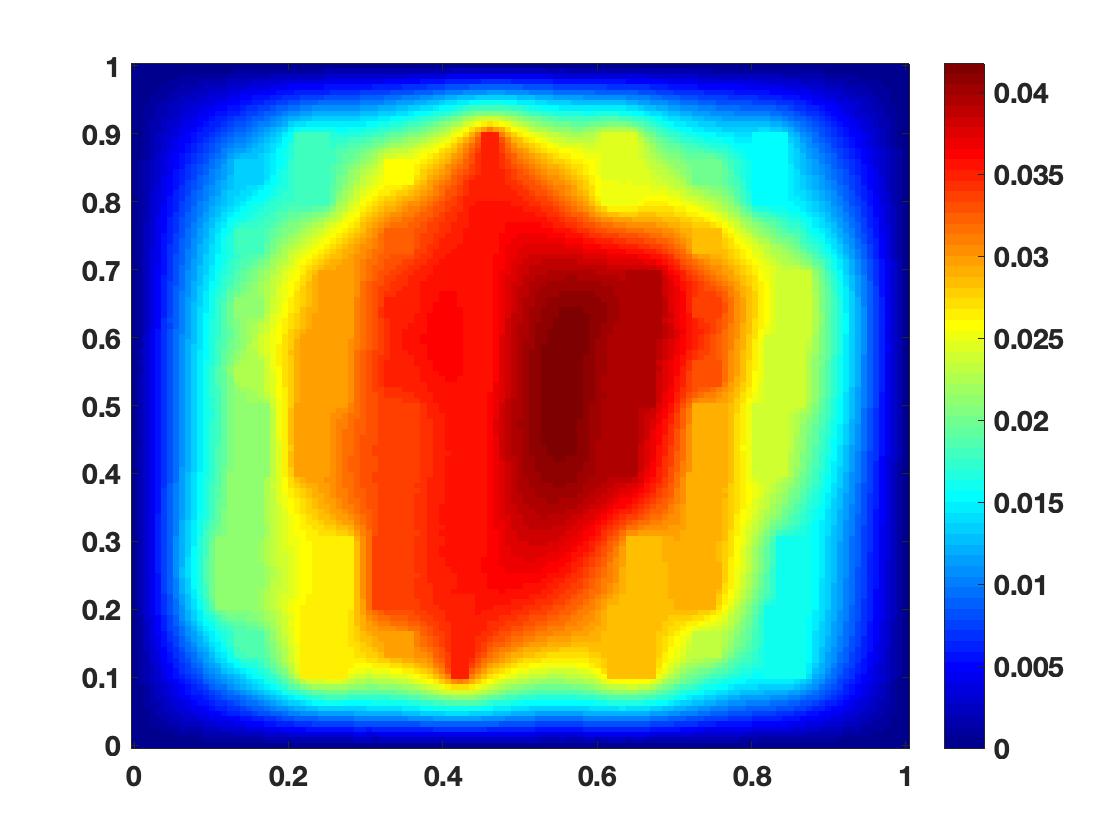}
		\includegraphics[trim={3cm 1.8cm 2.8cm 2cm},clip,width=0.24 \textwidth]{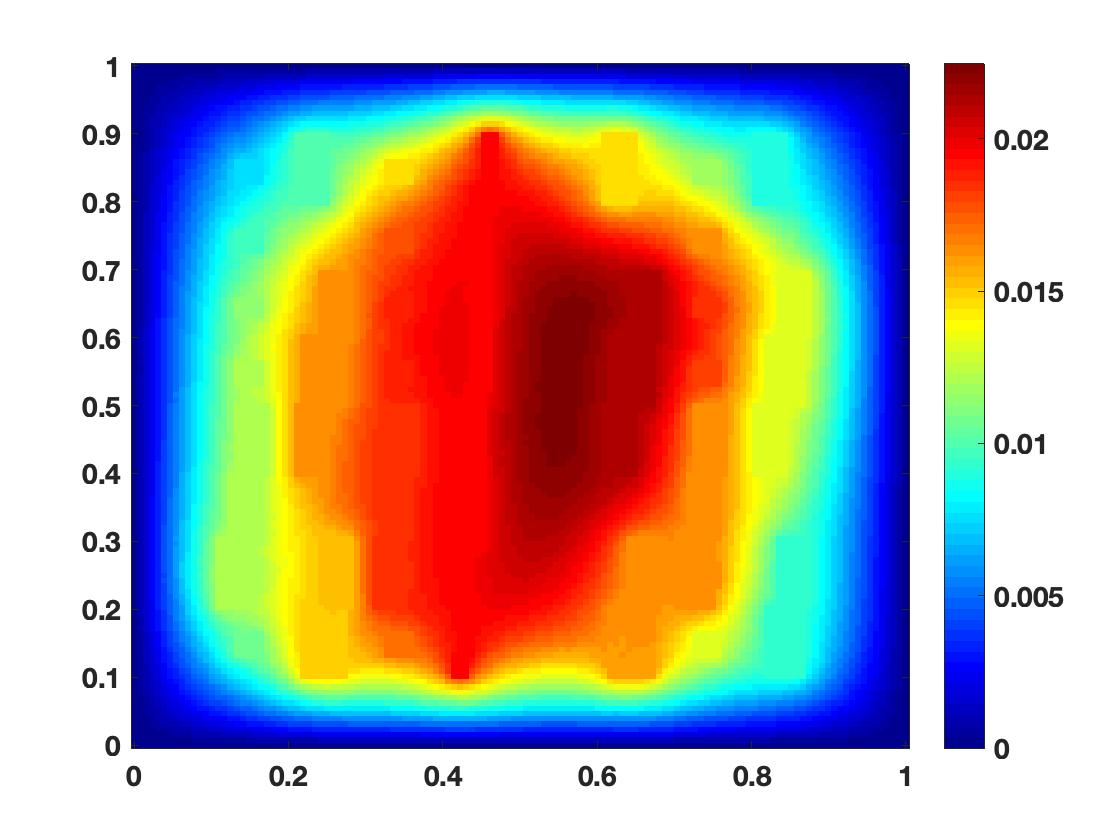}
		\includegraphics[trim={3cm 1.8cm 2.8cm 2cm},clip,width=0.24 \textwidth]{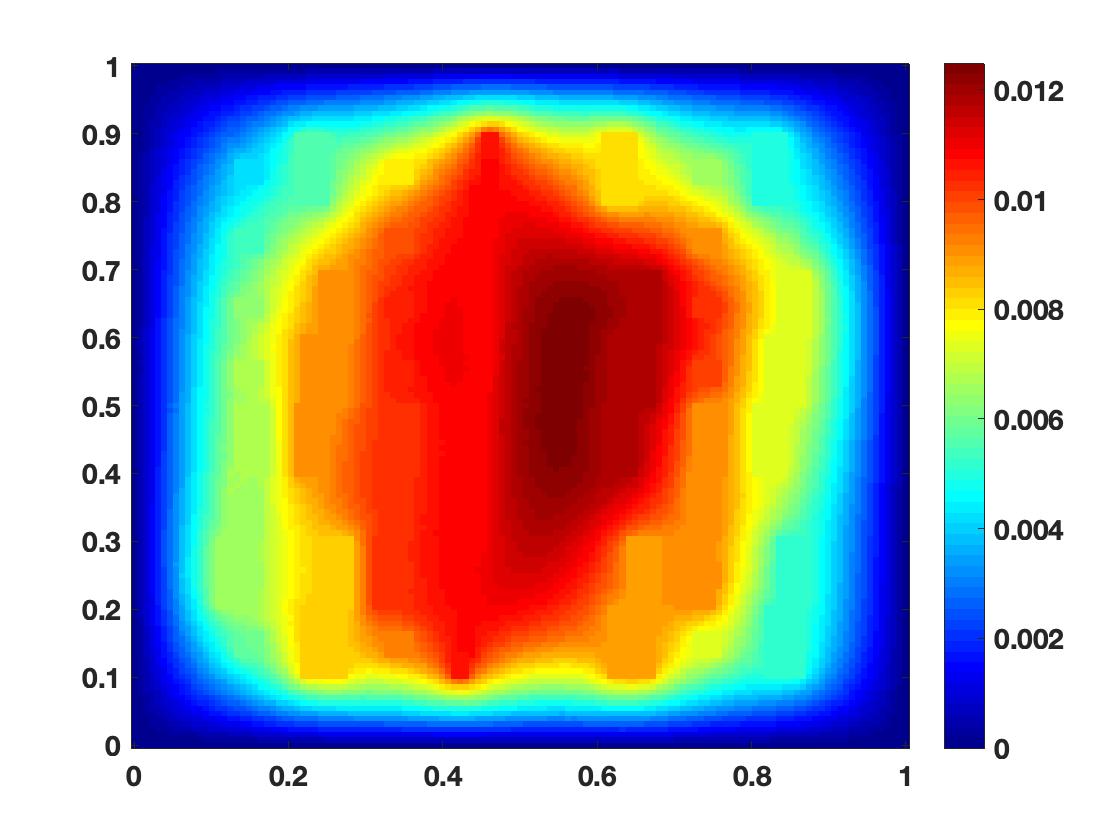}
		\includegraphics[trim={3cm 1.8cm 2.8cm 2cm},clip,width=0.24 \textwidth]{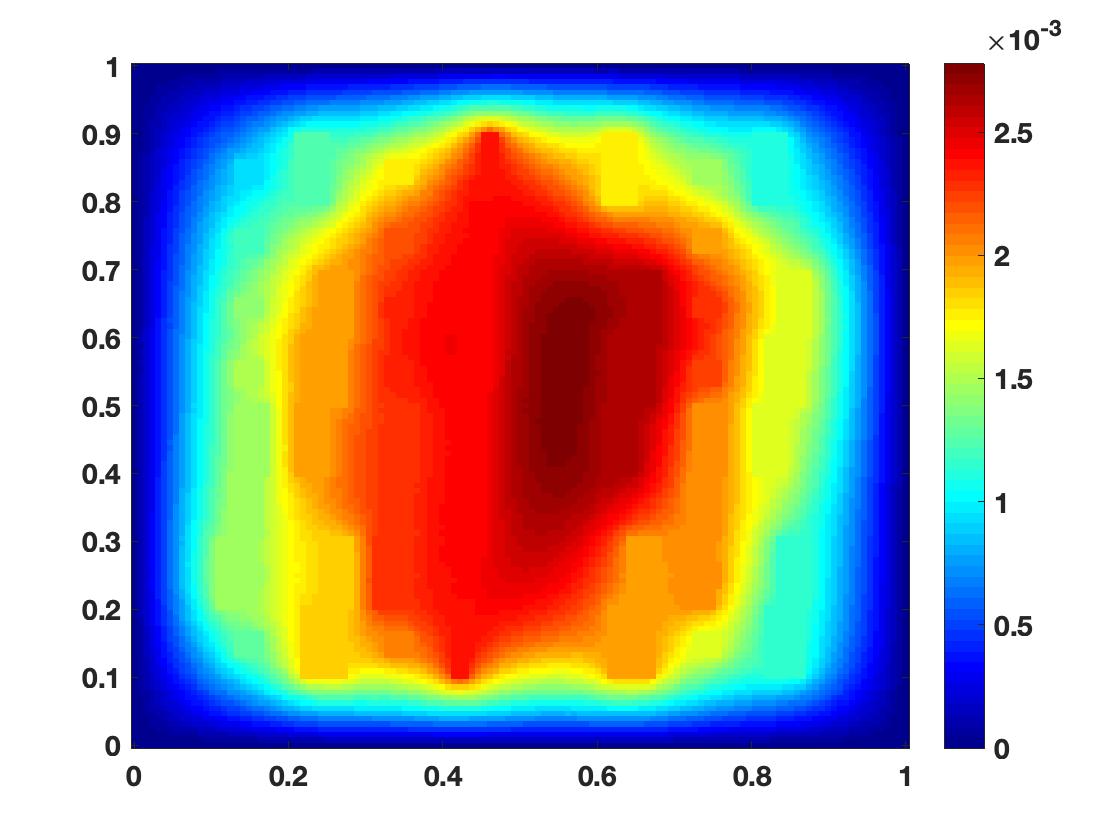}\\
		\includegraphics[trim={3cm 1.8cm 2.8cm 2cm},clip,width=0.24 \textwidth]{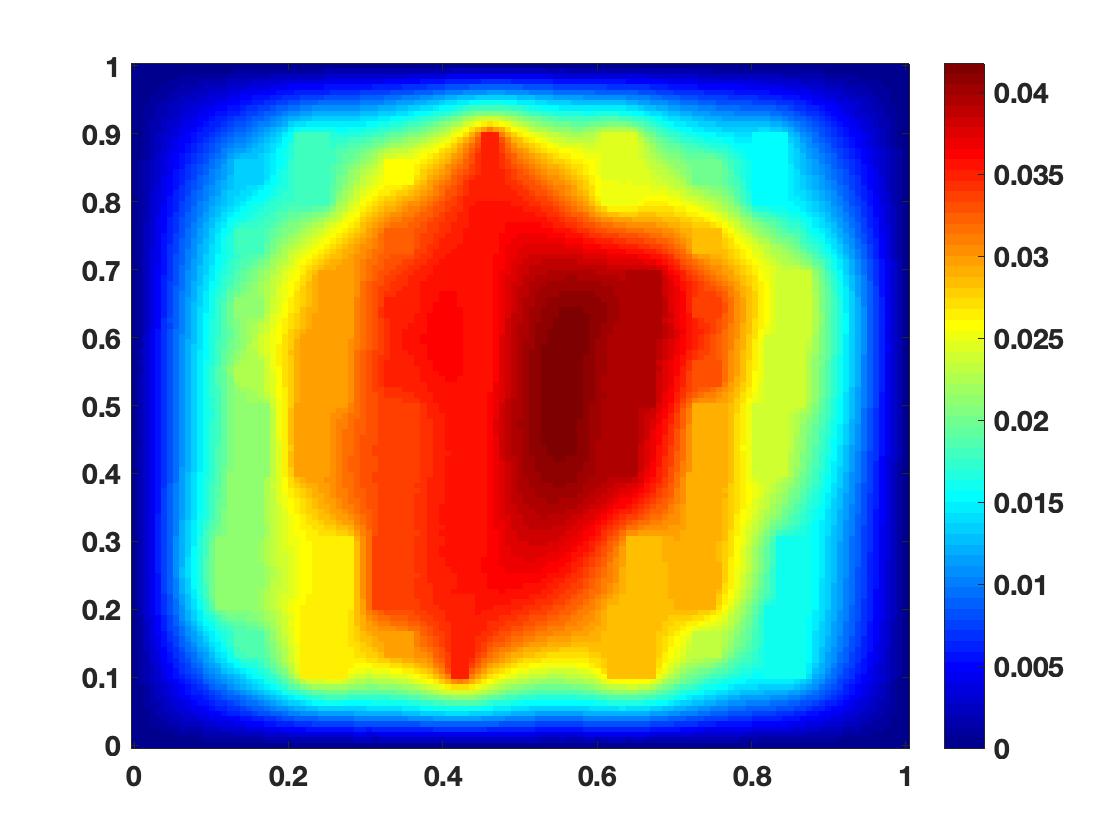}
		\includegraphics[trim={3cm 1.8cm 2.8cm 2cm},clip,width=0.24 \textwidth]{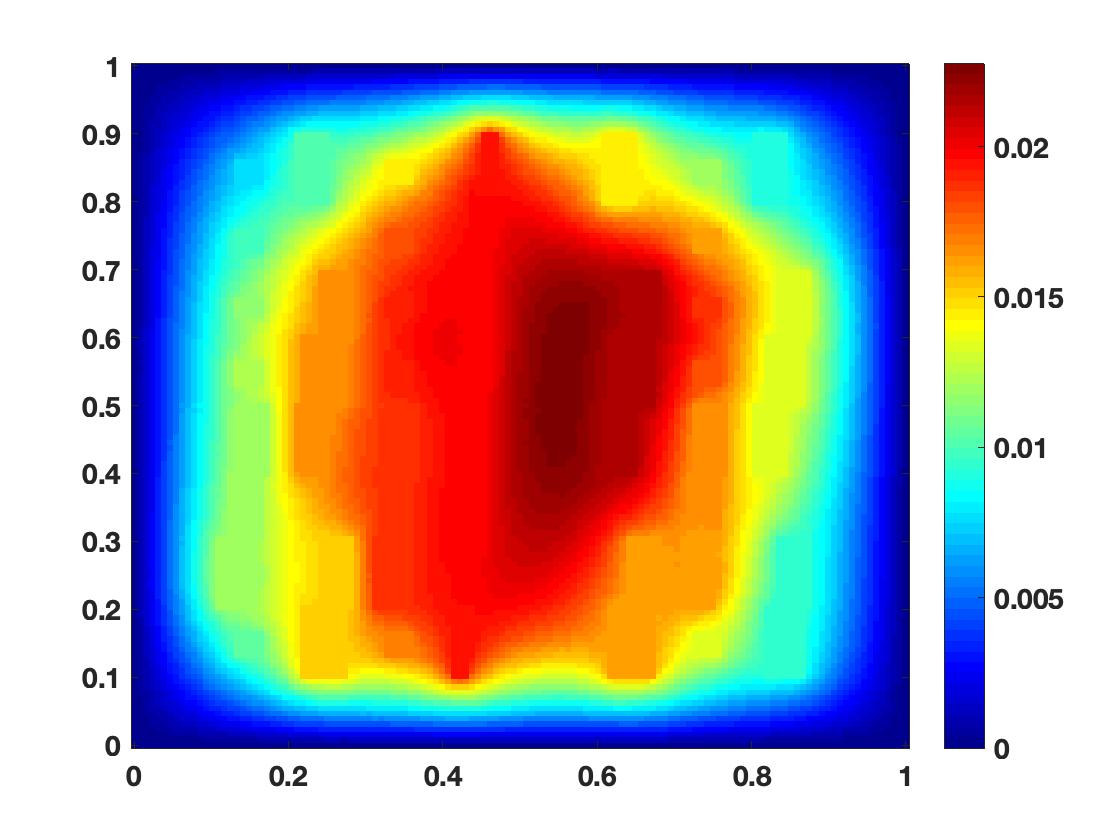}
		\includegraphics[trim={3cm 1.8cm 2.8cm 2cm},clip,width=0.24 \textwidth]{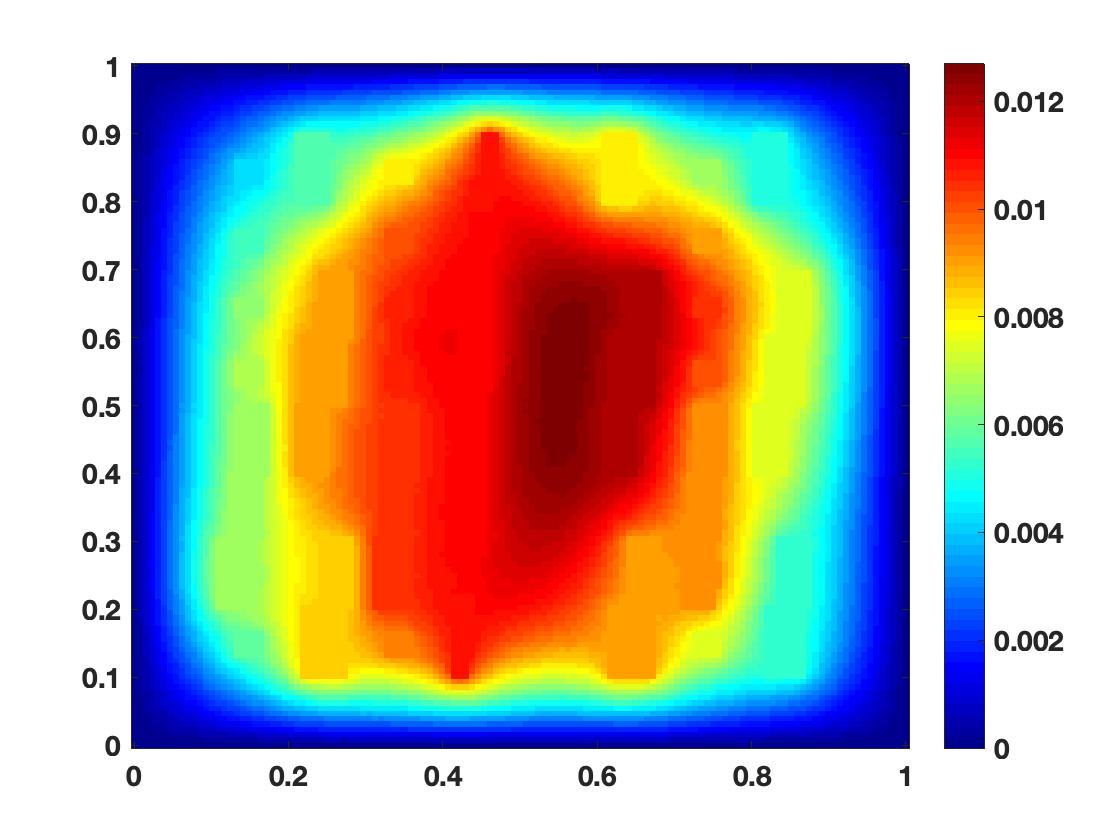}
		\includegraphics[trim={3cm 1.8cm 2.8cm 2cm},clip,width=0.24 \textwidth]{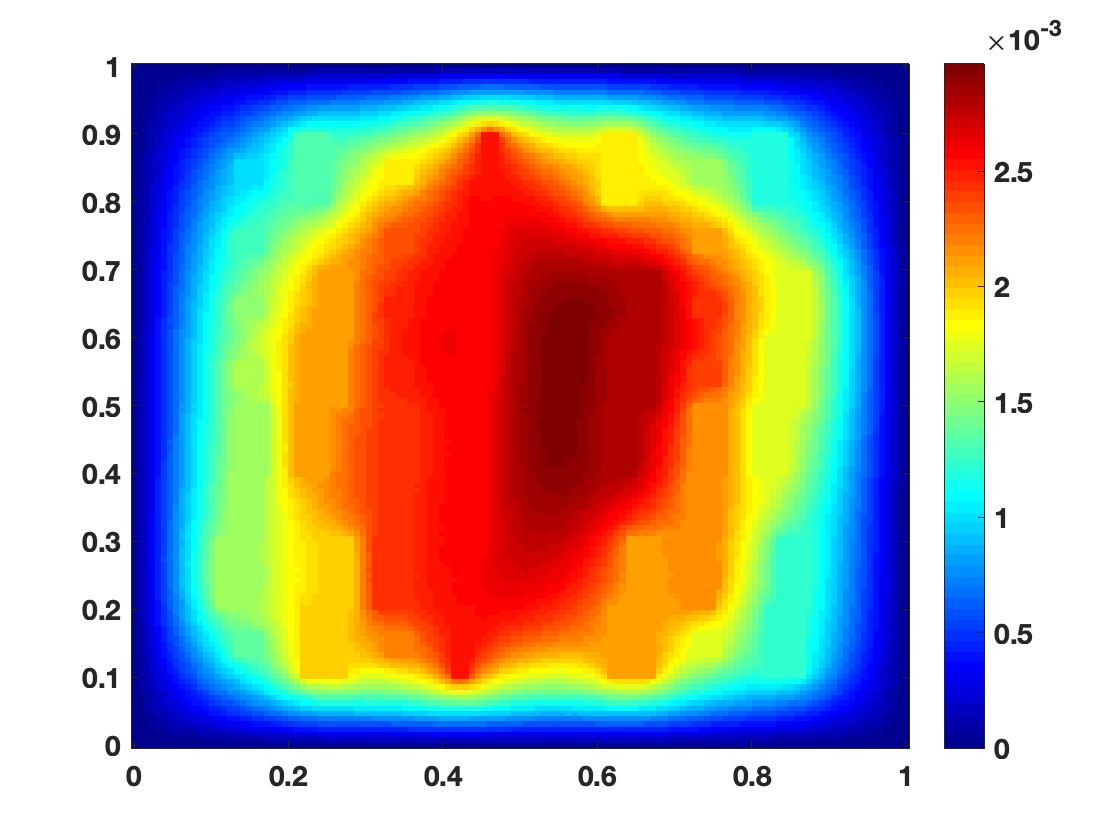}\\
		\caption{Numerical solutions $ U_k^n$ for $n=1, 3, 5, 10$ from Algorithm \ref{algorithm:wavelet+parareal} with ${\Delta T}=10^{-2}$ and ${\delta t}=10^{-3}$,  backward Euler scheme: iteration number $k=0$ (top), $k=1$ (middle) and $k=2$ (bottom).}
		\label{fig:PararealSol_ZeroSource_level2}
	\end{figure}		
The convergence history of Algorithm \ref{algorithm:wavelet+parareal} in $L^2(D)$-norm and $H^1_{\kappa}(D)$-norm is presented in Tables   \ref{error:L2_parareal_ZeroSource_l2} and  \ref{error:H1_parareal_ZeroSource_l2}. From the two tables, we see that  1 iteration is sufficent for the numerical solutions from Algorithm \ref{algorithm:wavelet+parareal} with backward Euler to converge under $L^2(D)$-norm and $H^1_{\kappa}(D)$-norm. We can conclude that our proposed algorithm with backward Euler scheme is effective in solving Problem (\ref{eqn:pde}) with zero source term.	
	\begin{table}[H]
	\begin{center}
	\begin{tabular}{|c|c|c|c|c|c|c|}
	\hline
	$T^n$ & $\text{Rel}^{\text{EW}}_{L^2} (T^n)$  &$\text{Rel}^0_{L^2}(T^n)$& $\text{Rel}^1_{L^2}(T^n)$&$\text{Rel}^2_{L^2}(T^n)$& $\text{Rel}^3_{L^2}(T^n)$& $\text{Rel}^4_{L^2}(T^n)$
	\\ \hline
	0.01 & 0.4381 & 4.8988 & 0.4381 & 0.4381 & 0.4381 & 0.4381 \\
0.02 & 0.5265 & 7.3857 & 0.7332 & 0.5265 & 0.5265 & 0.5265 \\
0.03 & 0.7540 & 11.0428 & 0.7002 & 0.7764 & 0.7540 & 0.7540 \\
0.04 & 0.9925 & 14.9558 & 0.5013 & 1.0253 & 0.9929 & 0.9925 \\
0.05 & 1.2334 & 19.0240 & 0.2844 & 1.2816 & 1.2342 & 1.2334 \\
0.06 & 1.4757 & 23.2389 & 0.3941 & 1.5582 & 1.4751 & 1.4758 \\
0.07 & 1.7191 & 27.6036 & 0.8900 & 1.8610 & 1.7154 & 1.7192 \\
0.08 & 1.9633 & 32.1231 & 1.5819 & 2.1943 & 1.9544 & 1.9636 \\
0.09 & 2.2084 & 36.8028 & 2.4535 & 2.5635 & 2.1913 & 2.2090 \\
0.10 & 2.4542 & 41.6483 & 3.5123 & 2.9751 & 2.4247 & 2.4554 \\
	\hline
	\end{tabular}
	\end{center}
	\vspace{-.4cm}
	\caption{Convergence history of Algorithm \ref{algorithm:wavelet+parareal} in relative $L^2(D)$ error for $f=0$: backward Euler scheme with ${\Delta T}=10^{-2}$ and ${\delta t}=10^{-3}$.}
	\label{error:L2_parareal_ZeroSource_l2}
	\end{table}

	\begin{table}[H]
	\begin{center}
	\begin{tabular}{|c|c|c|c|c|c|c|}
	\hline
	$T^n$ & $\text{Rel}^{\text{EW}}_{H_{\kappa}^1} (T^n)$&$\text{Rel}^0_{H_{\kappa}^1}(T^n)$& $\text{Rel}^1_{H_{\kappa}^1}(T^n)$& $\text{Rel}^2_{H_{\kappa}^1}(T^n)$& $\text{Rel}^3_{H_{\kappa}^1}(T^n)$& $\text{Rel}^4_{H_{\kappa}^1}(T^n)$
	\\ \hline
  0.01 & 7.1035 & 14.3354 & 7.1035 & 7.1035 & 7.1035 & 7.1035 \\
0.02 & 7.0711 & 11.3573 & 7.3430 & 7.0711 & 7.0711 & 7.0711 \\
0.03 & 7.1069 & 13.6291 & 7.2438 & 7.1275 & 7.1069 & 7.1069 \\
0.04 & 7.1544 & 17.0111 & 7.1216 & 7.1801 & 7.1551 & 7.1544 \\
0.05 & 7.2098 & 20.7809 & 7.0502 & 7.2335 & 7.2115 & 7.2098 \\
0.06 & 7.2727 & 24.7976 & 7.0138 & 7.3016 & 7.2741 & 7.2728 \\
0.07 & 7.3432 & 29.0209 & 7.0171 & 7.3900 & 7.3432 & 7.3434 \\
0.08 & 7.4211 & 33.4354 & 7.0887 & 7.5022 & 7.4188 & 7.4214 \\
0.09 & 7.5064 & 38.0353 & 7.2745 & 7.6424 & 7.5006 & 7.5068 \\
0.10 & 7.5990 & 42.8190 & 7.6303 & 7.8168 & 7.5877 & 7.5995 \\
	\hline
	\end{tabular}
	\end{center}
	\vspace{-.4cm}
	\caption{Convergence history of Algorithm \ref{algorithm:wavelet+parareal} in relative $H_{\kappa}^1 (D) $ error for $f=0$: backward Euler scheme with ${\Delta T}=10^{-2}$ and ${\delta t}=10^{-3}$.}
	\label{error:H1_parareal_ZeroSource_l2}
	\end{table}

Our last experiment is replacing backward Euler scheme by Crank-Nicolson scheme for the above problem. The corresponding multiscale solutions from Algorithm 1 are presented in Figure \ref{fig:WEMsFEM_solution_ZeroSource_CN}. We present the numerical solutions $U^n_k$ from Algorithm \ref{algorithm:wavelet+parareal} for $n=1,3,5,10$ with iteration number $k=0,2,4$ in Figure \ref{fig:PararealSol_ZeroSource_level2_CN}.
\begin{figure}[H]
		\centering
		\includegraphics[trim={3cm 1.8cm 2.8cm 2cm},clip,width=0.24 \textwidth]{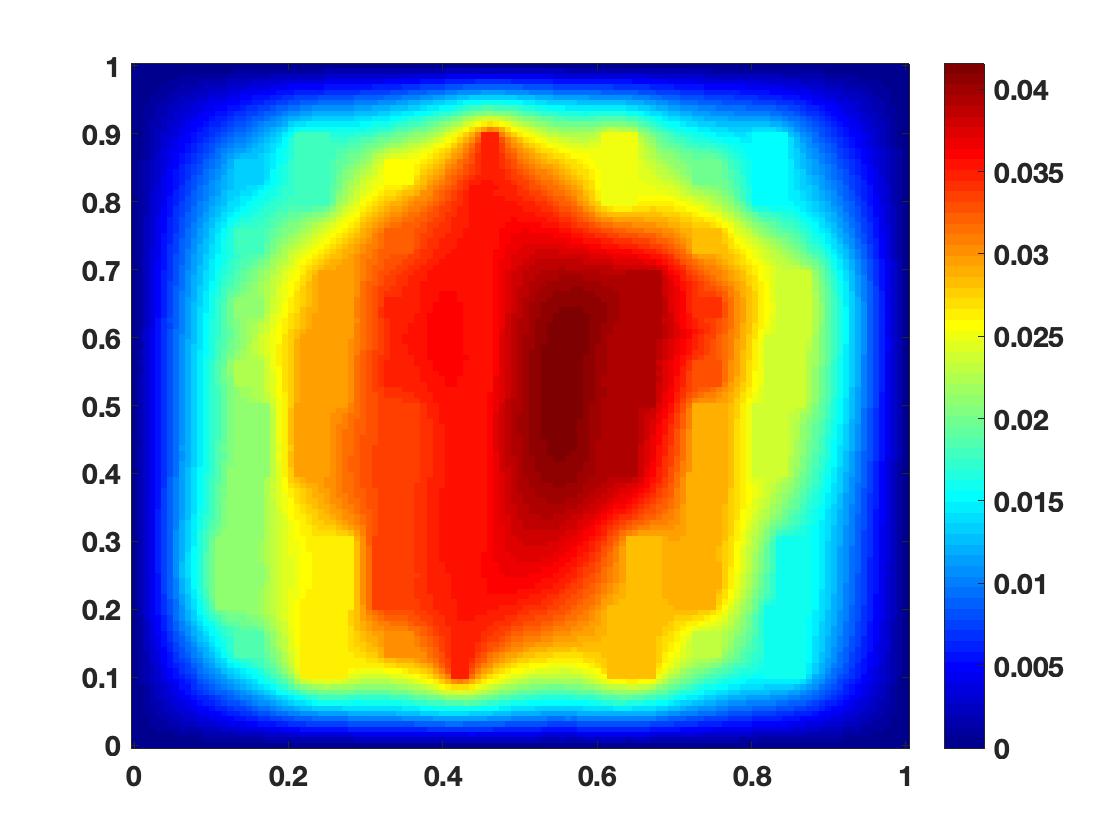}
		\includegraphics[trim={3cm 1.8cm 2.8cm 2cm},clip,width=0.24 \textwidth]{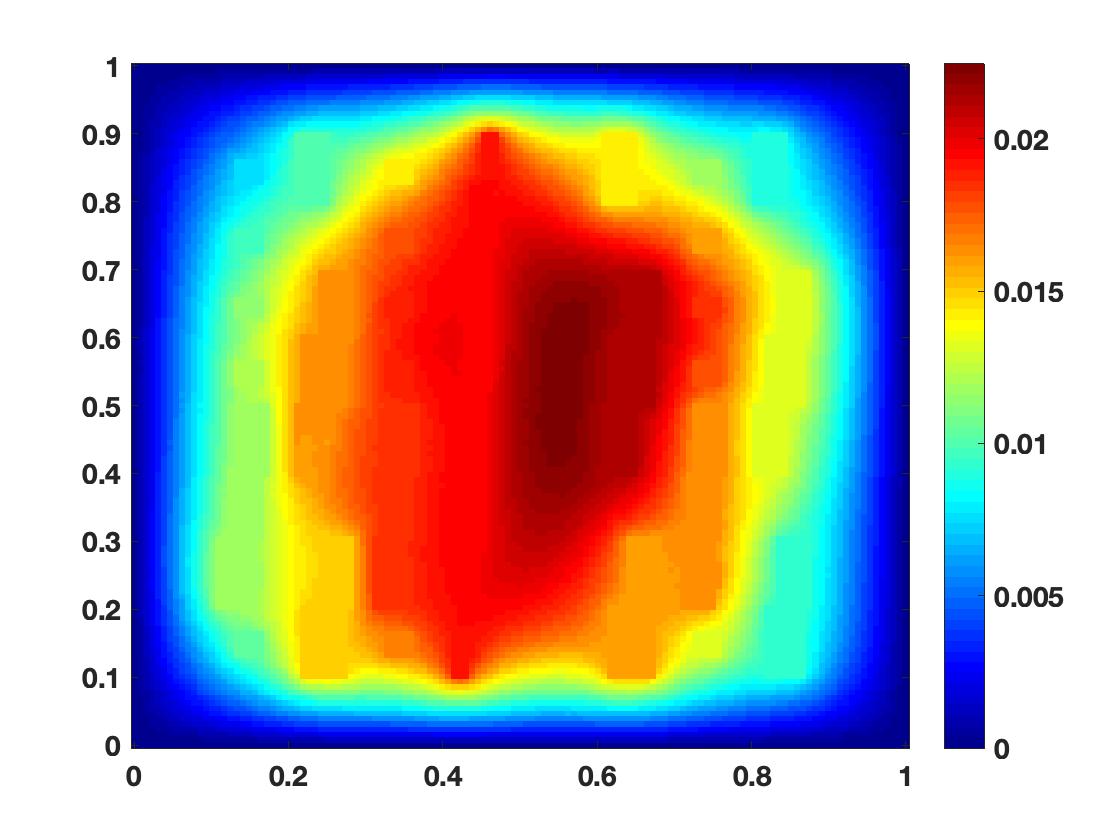}
		\includegraphics[trim={3cm 1.8cm 2.8cm 2cm},clip,width=0.24 \textwidth]{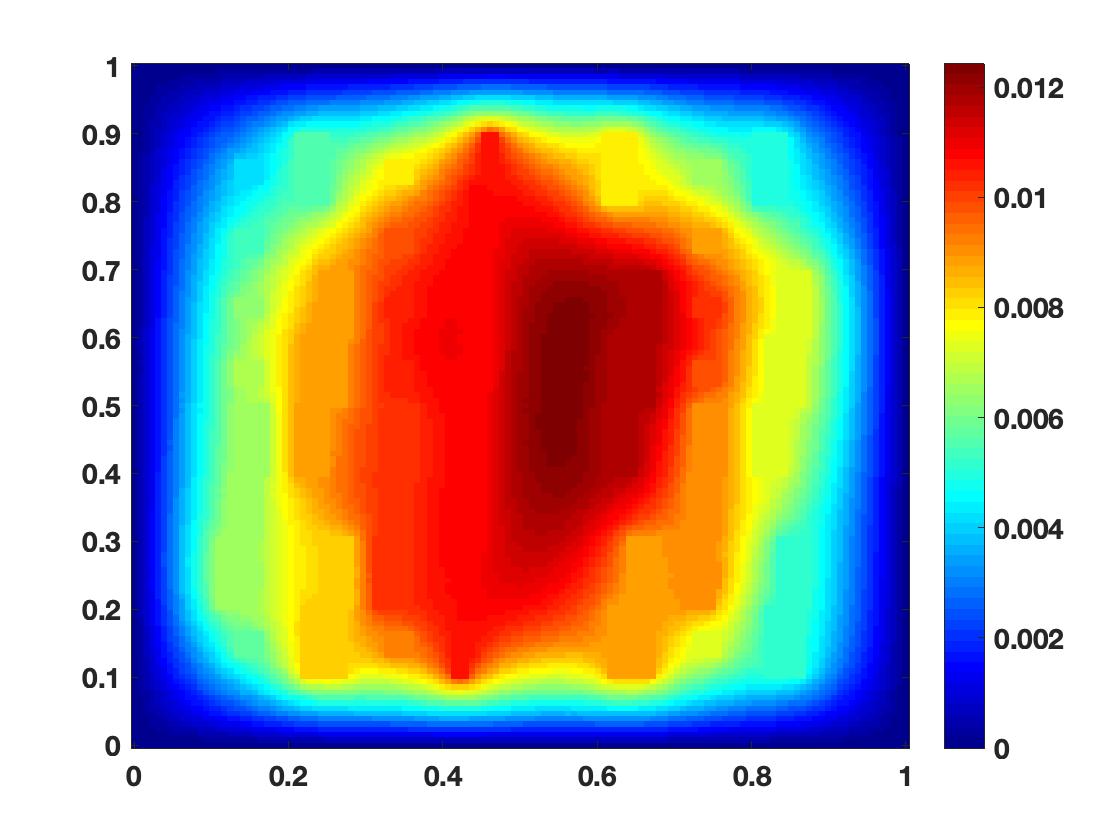}
		\includegraphics[trim={3cm 1.8cm 2.8cm 2cm},clip,width=0.24 \textwidth]{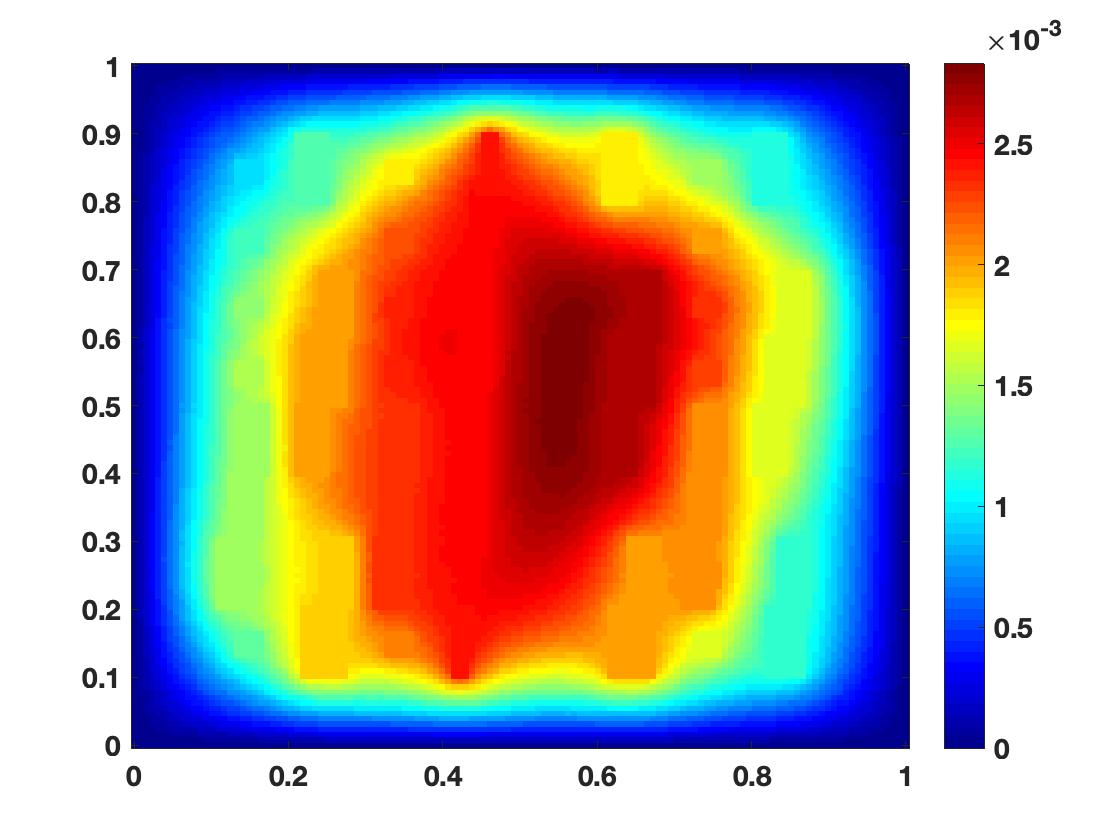}
		
		\caption{Multiscale solution from Algorithm \ref{algorithm:wavelet} with $\delta t=10^{-3}$ and $\ell=2$, Crank-Nicolson scheme: $u_{\text{ms},\ell}^{\text{EW,10}}$, $u_{\text{ms},\ell}^{\text{EW,30}}$,  $u_{\text{ms},\ell}^{\text{EW,50}}$ and $u_{\text{ms},\ell}^{\text{EW,100}}$. }
		\label{fig:WEMsFEM_solution_ZeroSource_CN}
	\end{figure}
\begin{figure}[H]
		\centering
		\includegraphics[trim={3cm 1.8cm 2.8cm 2cm},clip,width=0.24 \textwidth]{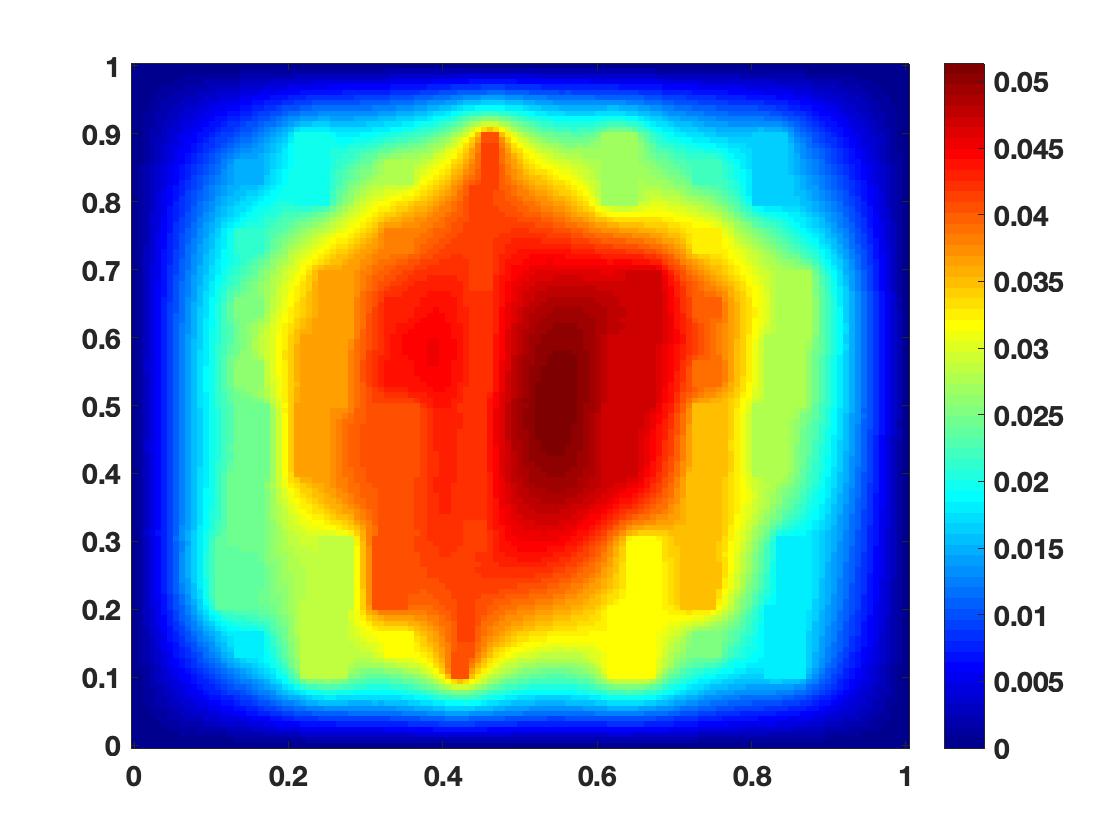}
		\includegraphics[trim={3cm 1.8cm 2.8cm 2cm},clip,width=0.24 \textwidth]{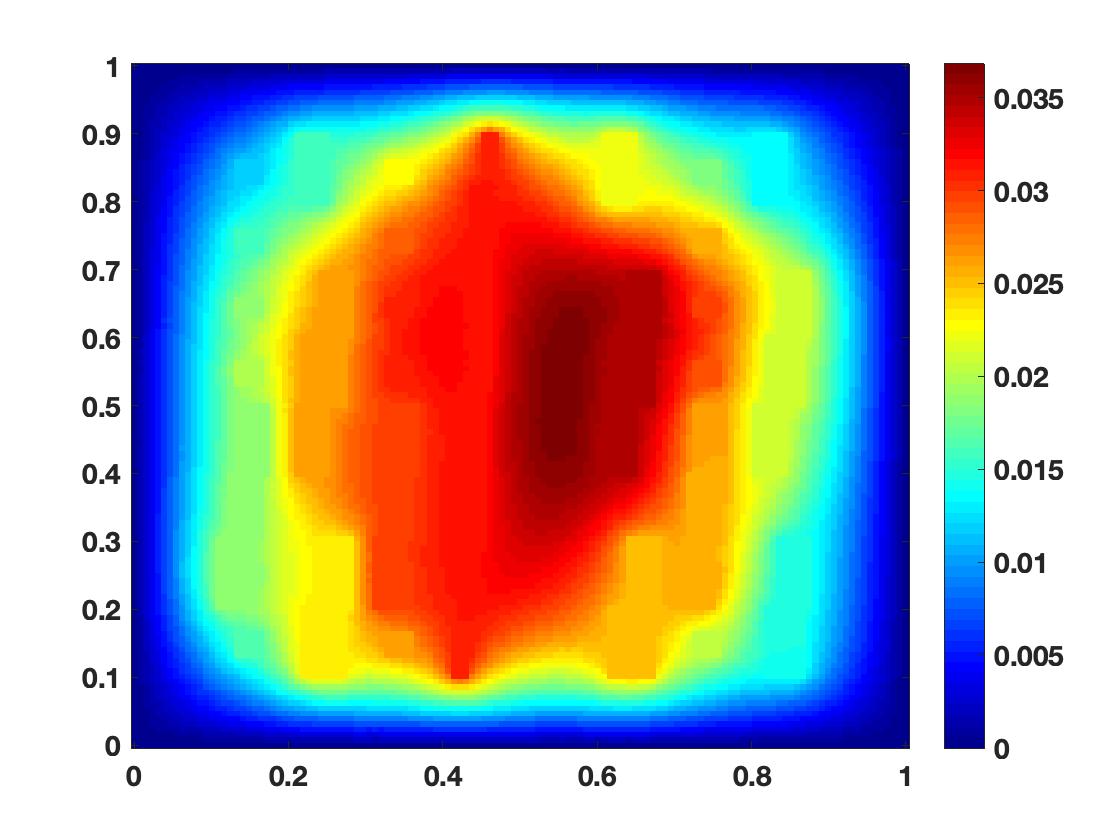}
		\includegraphics[trim={3cm 1.8cm 2.8cm 2cm},clip,width=0.24 \textwidth]{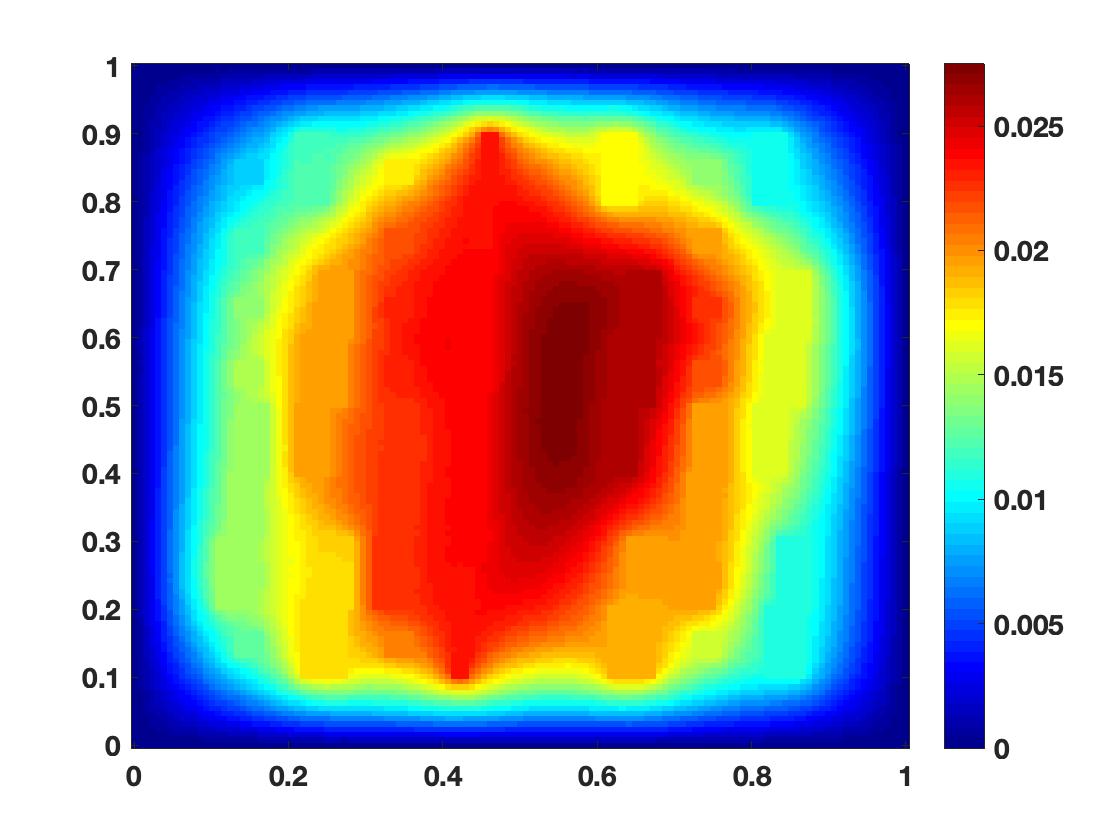}
		\includegraphics[trim={3cm 1.8cm 2.8cm 2cm},clip,width=0.24 \textwidth]{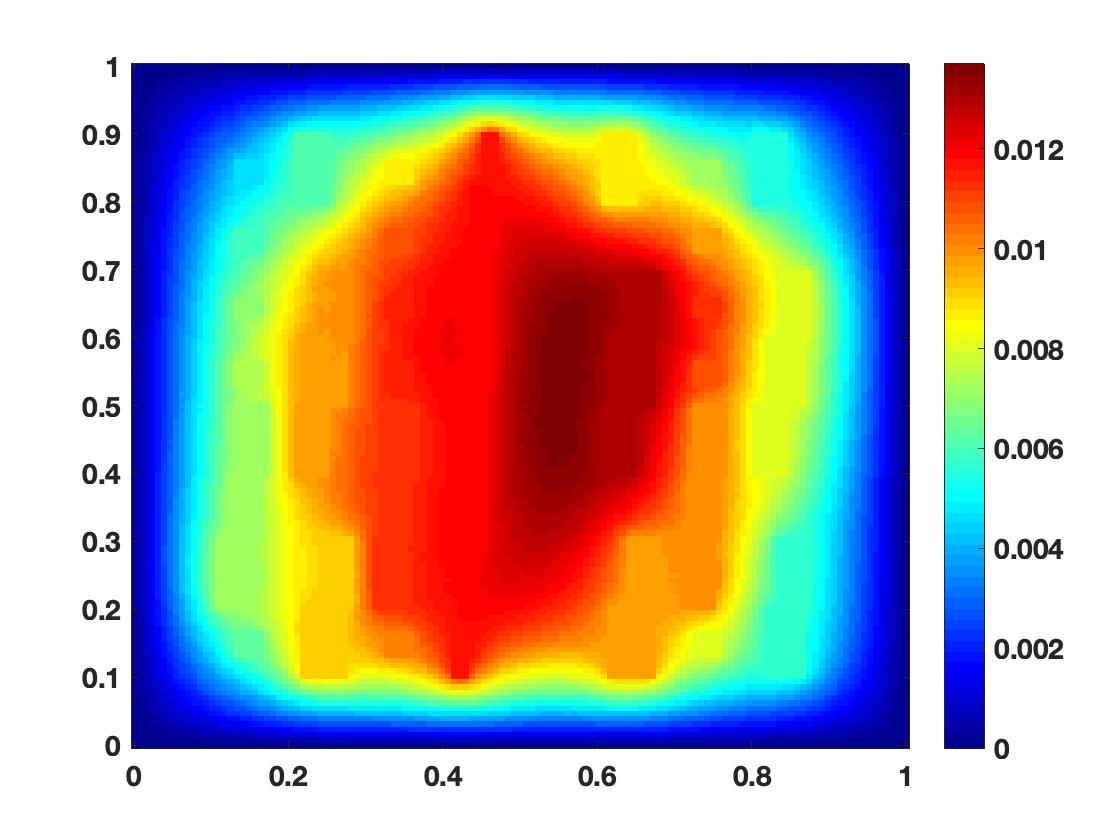}\\
		\includegraphics[trim={3cm 1.8cm 2.8cm 2cm},clip,width=0.24 \textwidth]{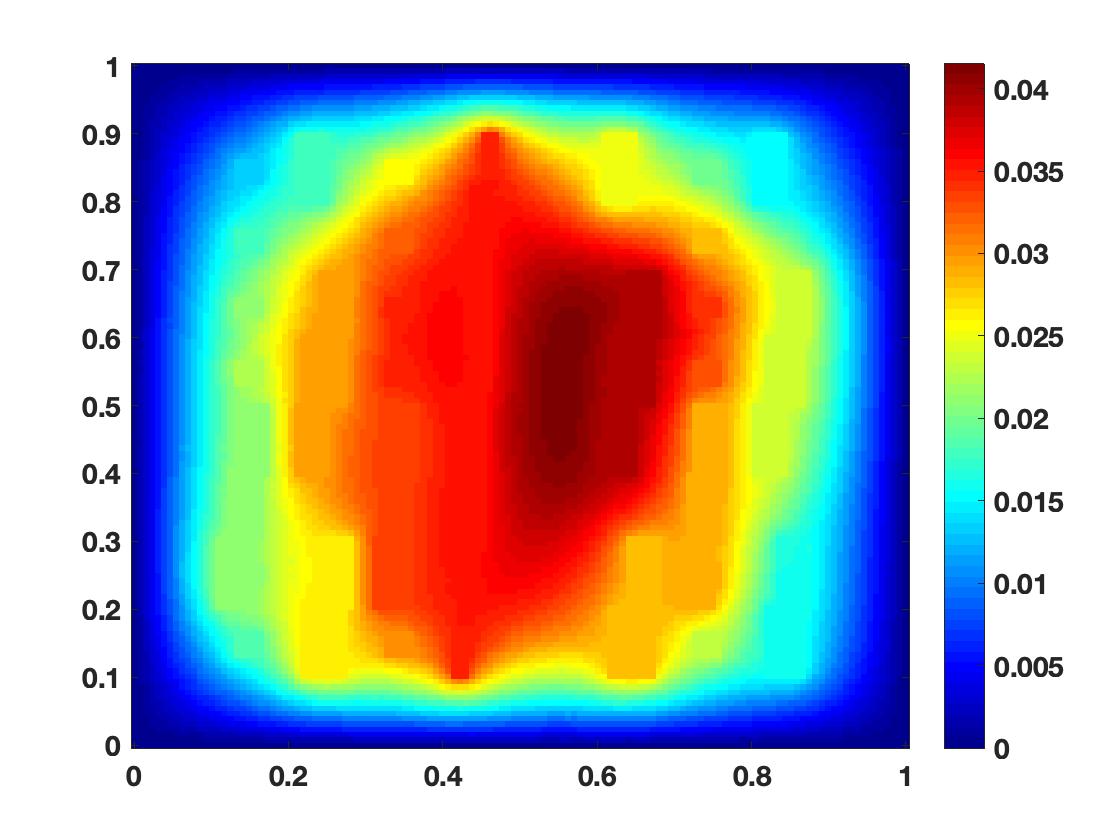}
		\includegraphics[trim={3cm 1.8cm 2.8cm 2cm},clip,width=0.24 \textwidth]{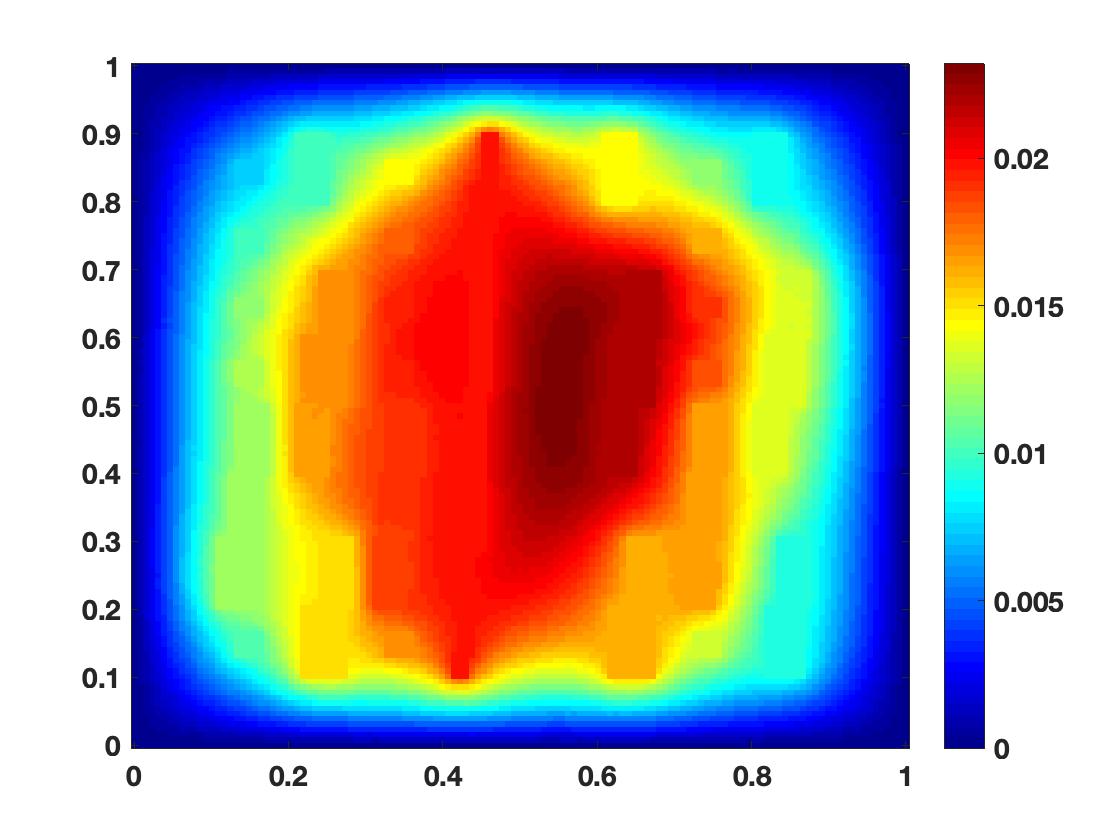}
		\includegraphics[trim={3cm 1.8cm 2.8cm 2cm},clip,width=0.24 \textwidth]{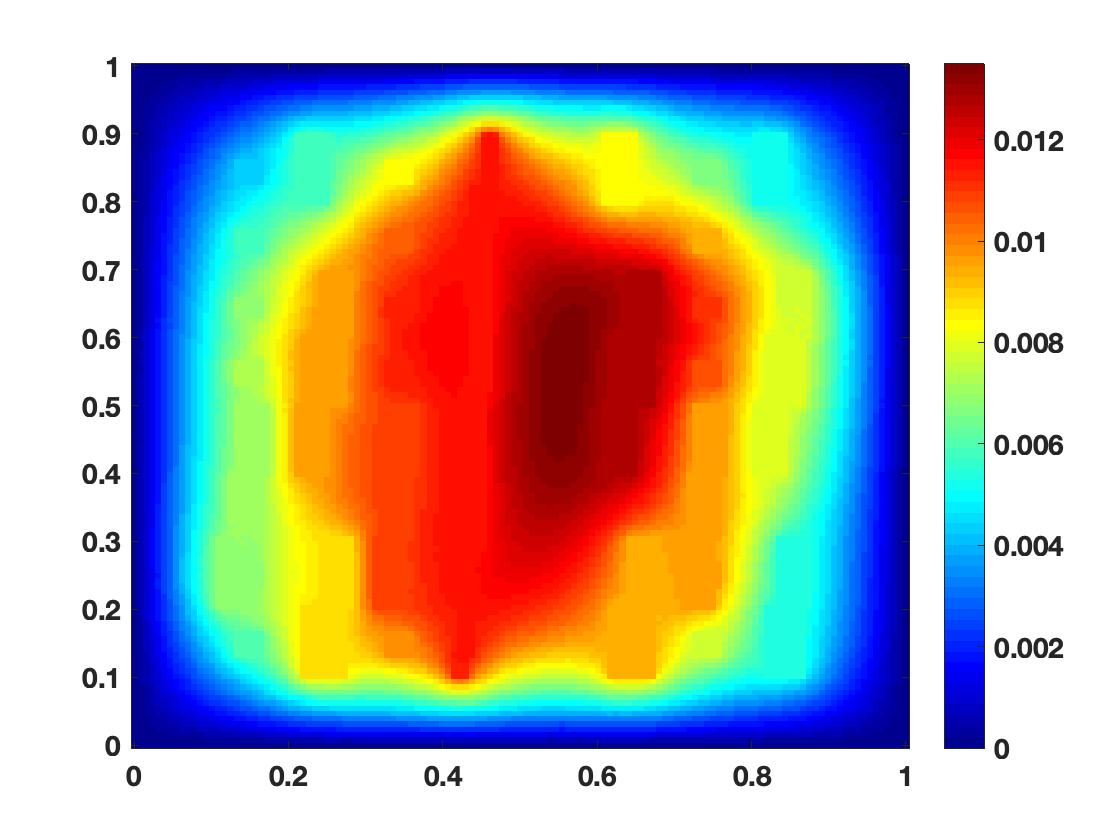}
		\includegraphics[trim={3cm 1.8cm 2.8cm 2cm},clip,width=0.24 \textwidth]{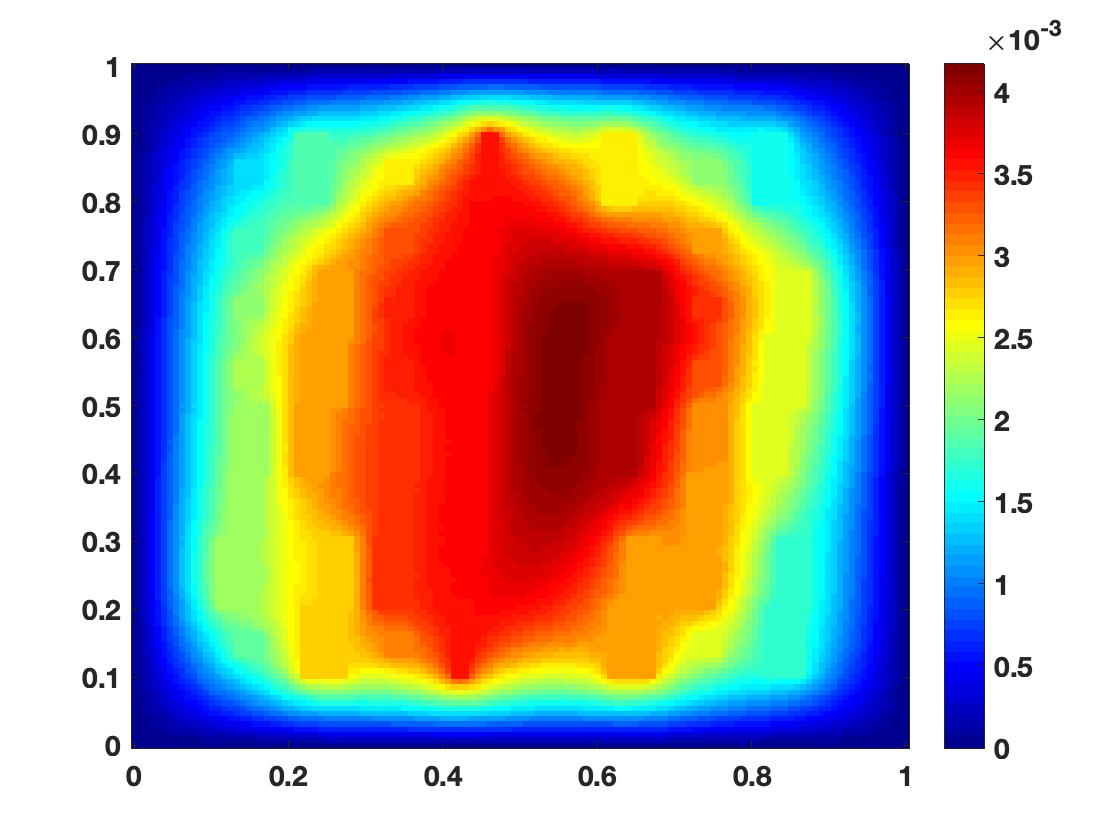}\\
		\includegraphics[trim={3cm 1.8cm 2.8cm 2cm},clip,width=0.24 \textwidth]{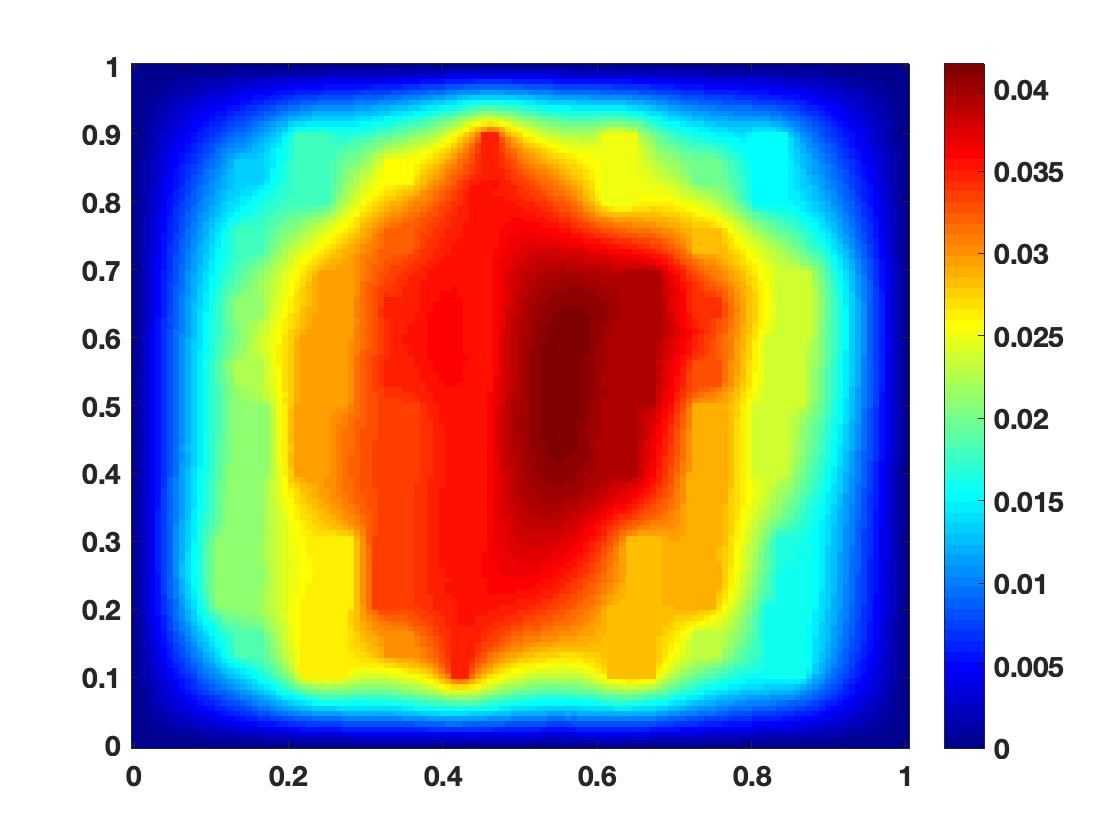}
		\includegraphics[trim={3cm 1.8cm 2.8cm 2cm},clip,width=0.24 \textwidth]{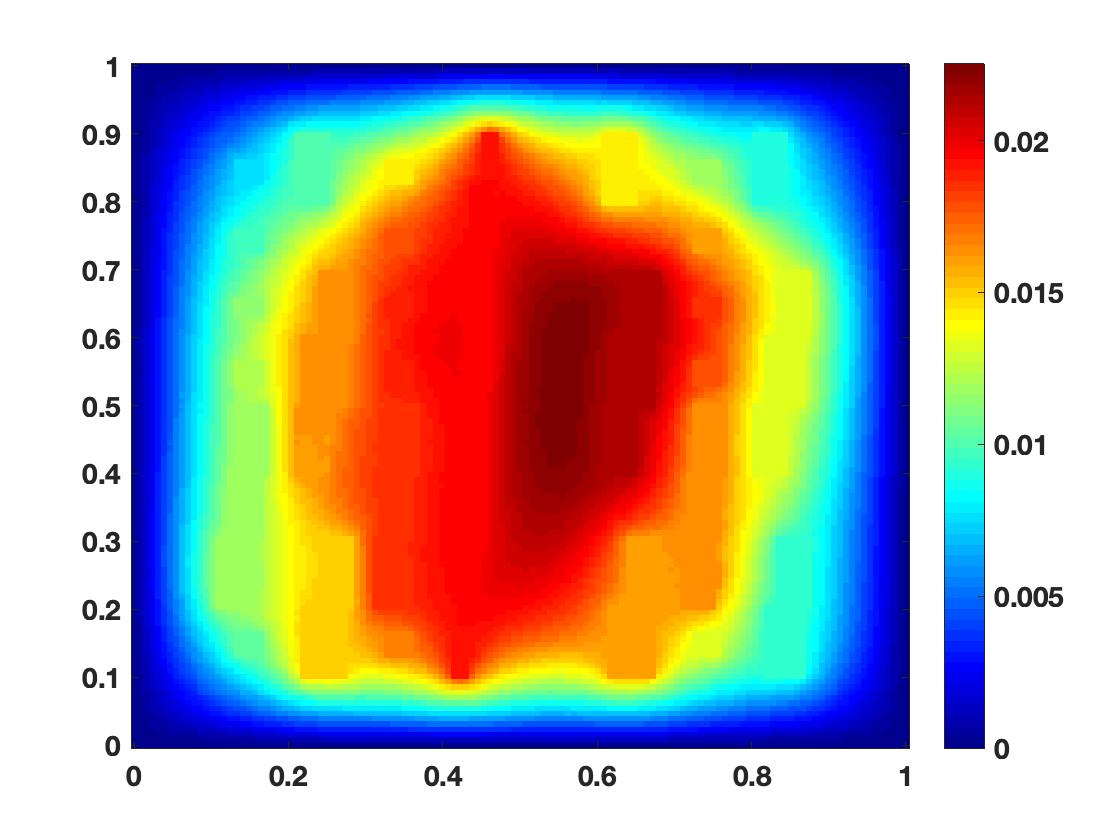}
		\includegraphics[trim={3cm 1.8cm 2.8cm 2cm},clip,width=0.24 \textwidth]{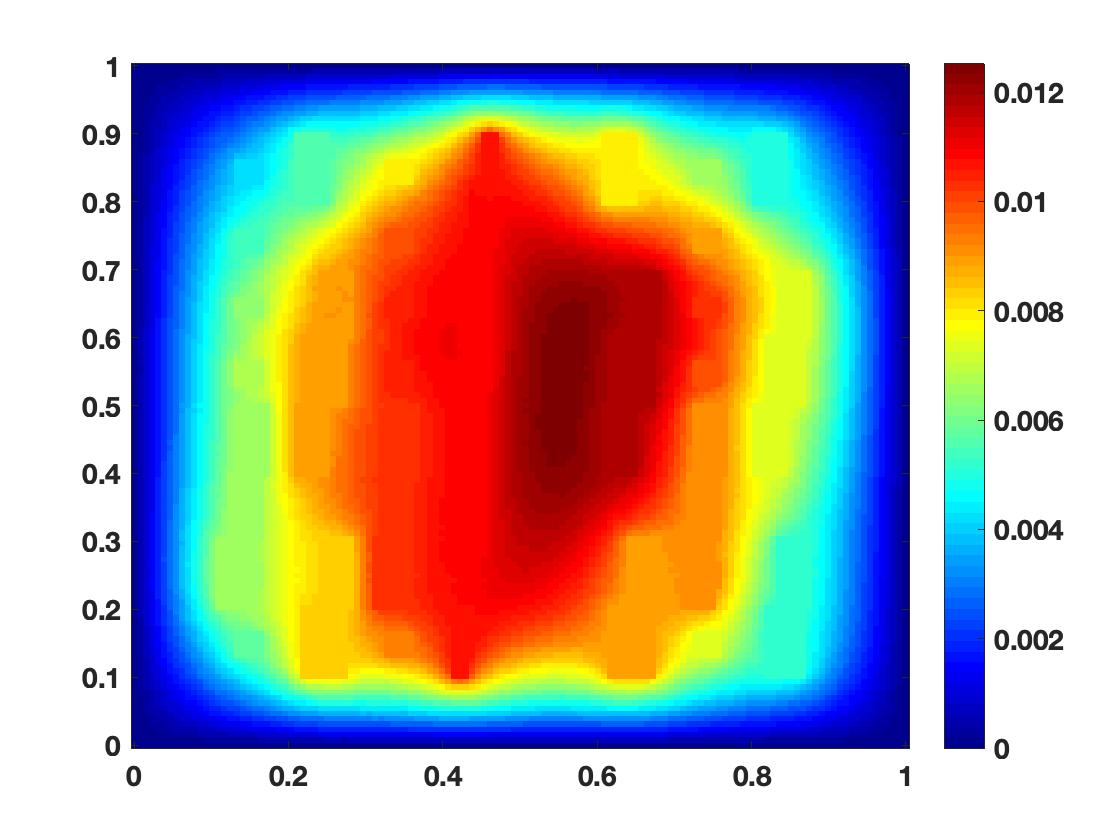}
		\includegraphics[trim={3cm 1.8cm 2.8cm 2cm},clip,width=0.24 \textwidth]{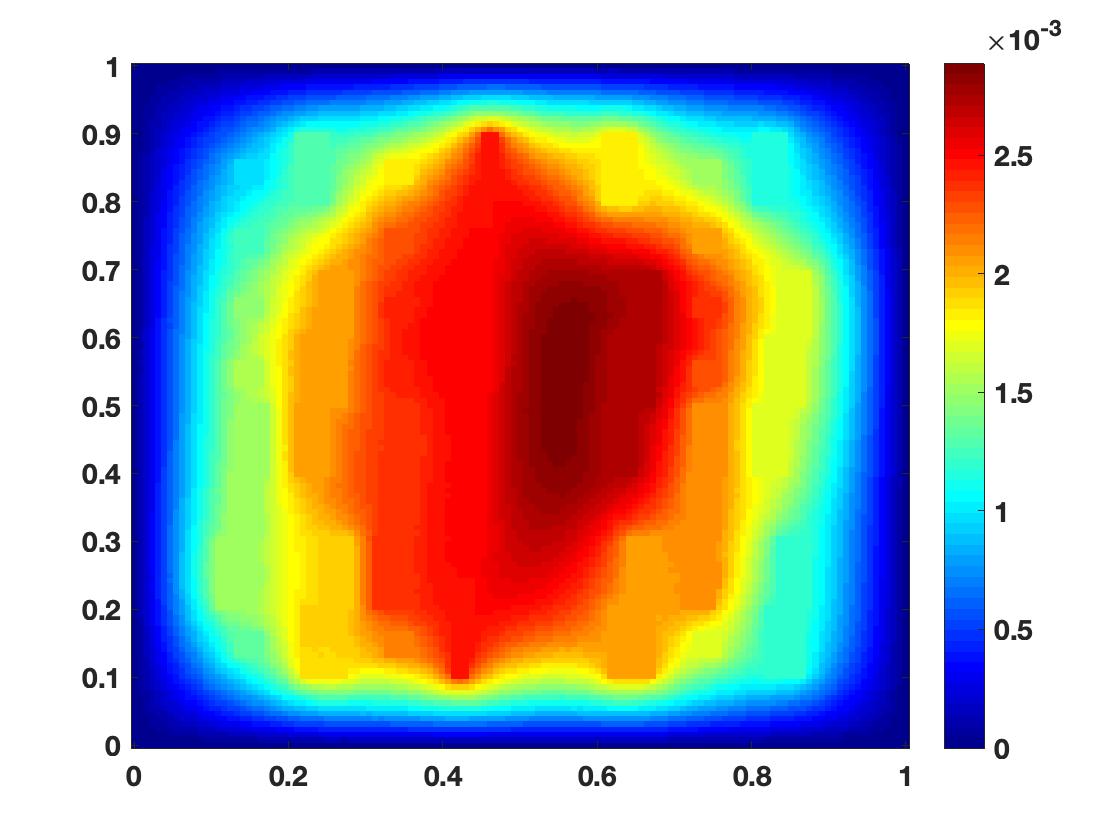}\\
		
		\caption{Numerical solutions $ U_k^n$ for $n=1, 3, 5, 10$ from Algorithm \ref{algorithm:wavelet+parareal} with ${\Delta T}=10^{-2}$ and ${\delta t}=10^{-3}$,  Crank-Nicolson scheme: iteration number $k=0$ (top), $k=2$ (middle) and $k=4$ (bottom).}
		\label{fig:PararealSol_ZeroSource_level2_CN}
	\end{figure}

	The convergence history of Algorithm \ref{algorithm:wavelet+parareal} in $L^2(D)$-norm and $H^1_{\kappa}(D)$-norm is presented in Tables  \ref{error:L2_parareal_ZeroSource_l2_CN} and \ref{error:H1_parareal_ZeroSource_l2_CN}. One observes that it takes 4 iterations to converge under $L^2(D)$-norm and 3 iterations to converge under $H_{\kappa}^1(D)$-norm when using Algorithm \ref{algorithm:wavelet+parareal} with Crank-Nicolson scheme.
Comparing Table \ref{error:L2_parareal_ZeroSource_l2} with Table \ref{error:L2_parareal_ZeroSource_l2_CN}, we can see that Algorithm \ref{algorithm:wavelet+parareal} with the Crank-Nicolson scheme yields a better accuracy than that with the backward Euler scheme. We conclude that Algorithm \ref{algorithm:wavelet+parareal} with backward Euler scheme converges faster than that with Crank-Nicolson scheme, while Algorithm \ref{algorithm:wavelet+parareal} with Crank-Nicolson scheme generate solutions with a higher accuracy for Problem (\ref{eqn:pde}) with zero source term.
\begin{table}[H]
	\begin{center}
	\begin{tabular}{|c|c|c|c|c|c|c|}
	\hline
	$T^n$ & $\text{Rel}^{\text{EW}}_{L^2} (T^n)$  &$\text{Rel}^0_{L^2}(T^n)$& $\text{Rel}^1_{L^2}(T^n)$&$\text{Rel}^2_{L^2}(T^n)$& $\text{Rel}^3_{L^2}(T^n)$& $\text{Rel}^4_{L^2}(T^n)$
	\\ \hline
	0.01 & 0.1915 & 17.9465 & 13.3079 & 0.1677 & 0.1677 & 0.1677 \\
0.02 & 0.2209 & 36.8558 & 29.6961 & 0.7619 & 0.1607 & 0.1607 \\
0.03 & 0.3817 & 59.7687 & 50.4219 & 1.8702 & 0.2242 & 0.1687 \\
0.04 & 0.5603 & 86.6988 & 75.8706 & 3.5561 & 0.3974 & 0.1805 \\
0.05 & 0.7439 & 118.2106 & 106.8652 & 6.2307 & 0.5572 & 0.2087 \\
0.06 & 0.9293 & 155.0529 & 144.3583 & 10.1676 & 0.7313 & 0.2443 \\
0.07 & 1.1155 & 198.1194 & 189.4784 & 15.6176 & 1.0374 & 0.2676 \\
0.08 & 1.3020 & 248.4594 & 243.5539 & 22.8847 & 1.5818 & 0.2717 \\
0.09 & 1.4886 & 307.3003 & 308.1424 & 32.3420 & 2.4362 & 0.2607 \\
0.10 & 1.6751 & 376.0776 & 385.0672 & 44.4367 & 3.6758 & 0.2492 \\
	\hline
	\end{tabular}
	\end{center}
	\vspace{-.4cm}
	\caption{Convergence history of Algorithm \ref{algorithm:wavelet+parareal}  in relative $L^2(D)$ error for $f=0$: Crank-Nicolson scheme with ${\Delta T}=10^{-2}$ and ${\delta t}=10^{-3}$.}
	\label{error:L2_parareal_ZeroSource_l2_CN}
	\end{table}

\begin{table}[H]
	\begin{center}
	\begin{tabular}{|c|c|c|c|c|c|c|}
	\hline
	$T^n$ & $\text{Rel}^{\text{EW}}_{H_{\kappa}^1} (T^n)$&$\text{Rel}^0_{H_{\kappa}^1}(T^n)$& $\text{Rel}^1_{H_{\kappa}^1}(T^n)$& $\text{Rel}^2_{H_{\kappa}^1}(T^n)$& $\text{Rel}^3_{H_{\kappa}^1}(T^n)$& $\text{Rel}^4_{H_{\kappa}^1}(T^n)$
	\\ \hline
0.01 & 7.0217 & 28.3350 & 16.7674 & 7.0138 & 7.0138 & 7.0138 \\
0.02 & 7.0053 & 39.6766 & 31.9136 & 7.3167 & 7.0129 & 7.0129 \\
0.03 & 7.0010 & 61.1794 & 51.3326 & 8.0179 & 7.0264 & 7.0138 \\
0.04 & 7.0014 & 87.7871 & 76.1885 & 8.7646 & 7.0801 & 7.0154 \\
0.05 & 7.0065 & 119.2332 & 106.9788 & 10.1730 & 7.1238 & 7.0207 \\
0.06 & 7.0162 & 156.0967 & 144.4364 & 13.0098 & 7.1297 & 7.0285 \\
0.07 & 7.0307 & 199.2282 & 189.6019 & 17.7065 & 7.1231 & 7.0341 \\
0.08 & 7.0501 & 249.6645 & 243.7722 & 24.5026 & 7.1574 & 7.0358 \\
0.09 & 7.0742 & 308.6303 & 308.4945 & 33.6707 & 7.3146 & 7.0357 \\
0.10 & 7.1032 & 377.5617 & 385.5910 & 45.5892 & 7.7242 & 7.0368 \\
	\hline
	\end{tabular}
	\end{center}
	\vspace{-.4cm}
	\caption{Convergence history of Algorithm \ref{algorithm:wavelet+parareal}  in relative $H_{\kappa}^1 (D) $ error for $f=0$: Crank-Nicolson scheme with ${\Delta T}=10^{-2}$ and ${\delta t}=10^{-3}$.}
	\label{error:H1_parareal_ZeroSource_l2_CN}
	\end{table}
	
\section{Conclusion}\label{sec:conclusion}
We propose in this paper a new efficient algorithm for parabolic problems with heterogeneous coefficients. This algorithm is named as the Wavelet-based Edge Multiscale Parareal (WEMP) Algorithm, which incorporates parareal algorithm into the Wavelet-based Edge Multiscale Finite Element Methods (WEMsFEMs). Therefore, it can handle parabolic problems with heterogeneous coefficients more efficiently if multiple processors are available. We derive the convergence rate of this algorithm, and verify its performance by several numerical tests. Future work includes the investigation of WEMP algorithm for time-fractional diffusion problems with heterogeneous coefficients.

	\bibliographystyle{abbrv}
	\bibliography{reference}
\end{document}